\newcommand{\RR}{\mathbb{R}}
\newcommand{\CC}{\mathbb{C}}
\newcommand{\ZZ}{\mathbb{Z}}
\newcommand{\SSS}{\mathbb{S}}
\newcommand{\HH}{\mathcal{Q}}
\newcommand{\QQ}{\mathfrak{q}}
\newcommand{\AAA}{\mathfrak{a}}
\newcommand{\Res}{\mathcal{R}}
\newcommand{\haus}{\mathcal H}
\newcommand{\eps}{\epsilon}                      
\newcommand{\tu}{\tilde{u}}
\newcommand{\tf}{\tilde{f}}
\newcommand{\tg}{\tilde{g}}    
\newcommand{\tsig}{\tilde{\sigma}} 
\newcommand{\tM}{\widetilde{M}}
\newcommand{\tV}{\widetilde{V}}
\newcommand{\tT}{\widetilde{T}}
\newcommand{\tr}{\tilde{r}} 
\newcommand{\tth}{\theta} 
\newcommand{\ts}{s} 
\newcommand{\tx}{\tilde{x}} 
\numberwithin{equation}{section}
\newtheorem*{rep@theorem}{\rep@title}
\newcommand{\newreptheorem}[2]{%
\newenvironment{rep#1}[1]{%
 \def\rep@title{#2 \ref{##1}}%
 \begin{rep@theorem}}%
 {\end{rep@theorem}}}
\newtheorem{theorem}{Theorem}[section]
\newtheorem{lemma}[theorem]{Lemma}
\newtheorem{prop}[theorem]{Proposition}
\newtheorem{corollary}[theorem]{Corollary}
\newtheorem{defn}[theorem]{Definition}
\theoremstyle{definition}
\newtheorem{remark}[theorem]{Remark}
\title{Transformation Optics for the Modelling of Waves in a Universe with Nontrivial Topology}
\author{Tracey Balehowsky\thanks{\href{tracey.balehowsky@helsinki.fi}{tracey.balehowsky@helsinki.fi} } \\ University of Helsinki\and Matti Lassas\thanks{\href{matti.lassas@helsinki.fi}{matti.lassas@helsinki.fi}}\\ University of Helsinki \and Pekka Pankka\thanks{\href{pekka.pankka@helsinki.fi}{pekka.pankka@helsinki.fi}}\\ University of Helsinki \and Ville Sirvi\"{o}\thanks{\href{ville.sirvio@helsinki.fi}{ville.sirvio@helsinki.fi}}\\ University of Helsinki}
\date{\today}                                         
\begin{document}

\maketitle

\begin{abstract}
We consider how transformation optics and invisibility cloaking can be used to construct models in subsets $\mathbb R^3$ with a varying metric, where the time-harmonic waves for a given angular wavenumber $k$, are equivalent to the waves in some closed orientable manifold.
The obtained models could in principle be physically implemented using a device built from metamaterials. In particular the measurements in the metamaterial device given by the Helmholtz source-to-solution operator are equivalent to Helmholtz source-to-solution measurements in a universe given by $(\RR_+\times M, -dt^2 +g)$, where $(M,g)$ is a closed, orientable, $C^\infty$-smooth, 3-dimensional Riemannian manifold. Thus the obtained construction could be used to simulate cosmological models using metamaterial devices.

\end{abstract}

\tableofcontents

%
%


%
%

\section{Introduction}

The study of cosmic topology seeks to answer a fundamental question: what is the shape of the universe? By shape, cosmologists mean to identify both the topological structure of the universe and its Lorentzian spacetime structure. It is assumed that the universe is given by a $C^\infty$-smooth differentiable 4-manifold of the form $\RR_+\times M$, where $M$ is a  3-manifold. The spacetime structure on $\RR_+\times M$ is proposed to be a Freidman-Lema\^itre-Robertson-Walker metric 
\begin{align}\label{flrw-metric}
  -dt^2 +A(t)g(x),  
\end{align} 
where $A:\RR_+\times M\to \RR$ is a positive function and $g$ is a Riemannian metric on $M$. It is not yet known what are the topological and curvature properties of the spatial manifold $(M,g)$ which best fits gathered cosmological data. However, new cosmic microwave background radiation (CMBR) data obtained by the European Space Agency's Planck satellite has implicated that the manifold $(M,g)$ has nonzero curvature \cite{PhysClosed, PhysOpen}. Further, using the most recent Planck measurements, it is argued in \cite{PhysClosed} and \cite{luminet2016survey} that a closed manifold $(M,g)$ best fits the data. Earlier literature using CMBR data from NASA's WMAP satellite have also suggested that $(M,g)$ might be a closed 3-manifold  \cite{luminet2008shape}, \cite{luminet-finite}, \cite{lachieze1995cosmic}. In particular, Luminet et. al. have contended that the correlations in the cosmic microwave background data available in 2003 were consistent with the universe having the topology of the Poincar\'e dodecahedral space \cite{luminet-finite}.

In this paper, we propose a method to construct a laboratory device which represents the approximate optical properties of a closed, orientable, $C^{\infty}$-smooth Riemannian 3-manifold $(M,g)$. For such manifolds, we  consider a spacetime model of the universe given by a Lorentzian manifold of the form $(\RR_+\times M,- dt^2+g)$ (here we have set $A(t) =1$ in \eqref{flrw-metric}). The existence of such a device would enable cosmologists to both simulate the optical properties of a candidate manifold $(M,g)$ for the universe structure and compare this information to experimental data.
In addition to devices which simulate cosmological models, one could possibly modify the proposed devices to obtain
new optical instruments or resonators that modify waves in interesting ways, similar to  
field rotators \cite{ChenRotate};  illusion optics \cite{ChenChan2007,Lai2009};
  invisible sensors \cite {AE2,GKLU-physic}; perfect absorbers \cite{Landy2008};  cloaked wave amplifiers \cite{GKLU7}; and superdimensional resonators
  which spectrum imitates higher dimensional objects \cite{GKLU-superdim}.


The device we propose is a cloaking device based on the principle of transformation optics. An object is \emph{cloaked} if it is rendered invisible to measurements. Devices which electromagnetically cloak an object have been built using metamaterials \cite{AluEngheta2005}, \cite{Pendry2006}, \cite{ChenChan2007}. These so-called metamaterials are composite materials designed to manipulate the properties of electromagnetic radiation, so that the phase velocity of the waves with an (angular) wave number $k$ may be a very large or small. One way to prescribe the properties and arrangement of the metamaterials for a cloaking device is through the method of \emph{transformation optics}. Here, the viewpoint is that the metamaterials which comprise the cloaking device mimic the properties of an abstract Riemannian manifold, which is called \emph{virtual space}. To realize these waves in the laboratory, a piecewise smooth diffeomorphism, called the transformation map, from virtual space into $\RR^3$ is constructed. The image of the transformation map is called \emph{physical space}, since it is a representation of the virtual space manifold in the ``real'' 3-dimensional world of $\RR^{3}$. Via the transformation map, one could in practice construct the cloaking device in a laboratory, as the Riemannian metric induced on physical space from the pushforward of the Riemannian metric on virtual space by the transformation map dictates the necessary metamaterial properties, whereas the transformation map itself describes their arrangement in $\RR^{3}$.  

Transformation maps which blow up a point in an Euclidean ball were first proposed in \cite{GLU-nonunique}, in the context of demonstrating nonuniqueness to Calder\'on's problem; please see reviews on this and related inverse problems in \cite{Uhlmann-ICM,Uhlmann-Bull}. Extensions and perturbations of this type of transformation map construction have been widely promoted for the manufacturing of cloaking devices. 
Moreover, the mathematical feasibility of such cloaks has been studied in various physical and theoretical settings. For optical waves governed by a Helmholtz equation, a non-exhaustive list of the literature wherein the transformation-based invisibility cloaks are defined in various physical settings is
\cite{Astala,GKLU-fullwave,GKLU-quantum,KOVW2010,KOVW-IP,Weder1,Weder2}. The stability and accuracy of such invisibility cloaks has been analyzed in  \cite{DLU2017,LT2011-2,LiuUhlmann2015,LLLW2017,LiuZhou,LZ2011,LiuSun2013,LLRU2015,Nguyen2013,NV-siam,NV-arch,Nguyen2012}.  
A survey of some of these results and others is contained in the papers \cite{GKLU-invisibility,GKLU-survey,Uhlmann-review}. For electromagnetic waves described by Maxwell's equations, the mathematical analysis of cloaks in this setting has been addressed in  \cite{DLU2017-2,GKLU-fullwave,GKLU-Phys,GKLU-wormhole,LZ2016} and the stability of such cloaks has been studied in \cite{BaoLiu2014, BaoLiuZou}. Besides cloaks described by transformation optics, there are other alternatives to implement invisibility cloaks. In particular, we mention the mathematical analysis of plasmonic cloaking in the works of \cite{AluEngheta2005,Ammari2016,
Ammari-etal2013,AmmariPart1,AmmariPart2,MiltonMcPhedran,MiltonNicorovici}, and the discrete cloaking models contained in \cite{LST2015}.

We highlight that the transformation cloaking device given in \cite{GKLU-Phys} and \cite{GKLU-wormhole} for electromagnetic waves obeying Maxwell's equations models the electromagnetic properties of a virtual space with the geometry of a wormhole. The authors of these papers achieved a invisibility cloak with this geometry by building a transformation map which ``blows up a curve''. This construction led to the experimental implementation of a magnetic wormhole by physicists in 2015 \cite{Sanchez}. In this experimental implementation, the required metamaterial was built using superconducting elements.

Inspired \cite{GKLU-Phys} and \cite{GKLU-wormhole} we define a transformation map which allows us to render a physical space model $\tM\subset \RR^3$, endowed with a metric $\tilde g$, for any virtual space given by a closed, orientable, $C^\infty$-smooth, 3-manifold $M$.

To do this, we demonstrate and exploit the existence of a link in $M$ that is compatible with the differentiable structure of the manifold. We call a union $L$ of finitely many, pair-wise disjoint, embedded, smooth, simple closed curves $S_j$, $j=1,2,\dots, J$ in $M$ a \emph{link in $M$},
that is $L=\bigcup_{j=1}^J S_j$.  Assume for now that $M$ contains a link $L$, and that there is a smooth diffeomorphism
\begin{eqnarray}\label{links removed}
\Psi: M\setminus L \to \tM \subset \RR^3.
\end{eqnarray}
Here the set $\tM$ has the form
\begin{eqnarray}\label{model device}
\tM = \tM_0^{int} \setminus \bigcup_{j=1}^J  \widetilde  T_j,
\end{eqnarray}
where $\tM_0 \subset \RR^3$ is an open set having $2$-torus as its boundary (recall that a $2$-torus is a surface homeomorphic to $\SSS^1\times \SSS^1$ and a solid $3$-torus is homeomorphic to $\overline{B}^2\times \SSS^1$). Additionally in \eqref{model device}, $\tM^{int}_0$ is the set of interior points of $\tM_0$ and the manifolds $ \widetilde  T_j\subset \RR^3$, $j=1,\ldots J$, are mutually disjoint solid $3$-tori contained in $\tM_0$. 
The number tori appearing in the decomposition, $J$, is equal to the number of curves in the link minus 1.
Please see Figures \ref{fig:tori-cross-section} and \ref{fig:see-through}.


The existence of a diffeomorphism $\Psi$ as in  \eqref{links removed} is closely related to the question of whether the closed 3-manifold $M$ can be represented as a union of solid 3-tori and a set $N$ which is diffeomorphic to an open set in $\RR^3$. 
To consider this, let $T_j(R)$ be the closure of the 
$R$-neighborhoods of the curves $S_j$ on $M$, where $R>0$ is so small 
that each $  T_j(R)$ is diffeomorphic to a solid 3-torus.
We can write $M$ as   
\begin{eqnarray}\label{model on manifold}
M= N\cup \bigcup_{j=1}^J   T_j(R),
\quad \hbox{where }N=M \setminus \bigcup_{j=1}^J   T_j(R),
\end{eqnarray}
Clearly, when  a diffeomorphism $\Psi$ in  \eqref{links removed} exists, $N$ is diffeomorphic
to the open set $\widetilde N=\Psi(N)\subset \RR^3$, and $ T_j(R)$ are solid 3-tori.
Below, the proof of Proposition \ref{smooth-embedding} on
the existence of a diffeomorphism $\Psi$ in  \eqref{links removed} is closely related to the representation of $N$ as a union of a set $N$ diffeomorphic to  $\widetilde N\subset \RR^3$ and the solid 3-tori.  

%

As concrete examples of manifolds with representation \eqref{model on manifold}, consider the classical examples $\SSS^1\times \SSS^1$ and $\SSS^3$. Both spaces are obtained by gluing together two copies of the solid $3$-torus $\overline{B^2}\times \SSS^1$ along the boundary $\partial \overline{B^2} \times \SSS^1$. In both cases we have $J=1$ and the link $L$ is merely a single closed curve which is not knotted. 
In the case of $\SSS^1\times \SSS^1$, let $D_+$ and $D_-$ be the (closed) upper and lower hemispheres of $\SSS^1$, respectively. Then $\SSS^1 \times \SSS^1 = (D_+ \cup D_-)\times \SSS^1 = (D_+ \times \SSS^1) \cup (D_- \times \SSS^1)$. Hence,
$\SSS^1 \times \SSS^1$ is a union of two 3-tori.
To see that the $3$-sphere $\SSS^3$ is also a union of two copies of $\overline{B^2}\times \SSS^1$, we observe that, since $\SSS^3$ as the manifold boundary of the closed $4$-ball $\overline{B^4}$ and $\overline{B^4} \cong \overline{B^2}\times \overline{B^2}$, where $\cong$ denotes a homeomorphism between the spaces, we see that $\SSS^3 = \partial \overline{B^4}\cong (\partial \overline{B^2}\times \overline{B^2}) \cup  (\overline{B^2}\cup \partial \overline{B^2}) \cong (\SSS^1 \times \overline{B^2}) \cup (\overline{B^2}\times \SSS^1)$. 
Hence, also $\SSS^3$ is a union of two 3-tori.



In these cases, $\tM_0$ is a solid $3$-torus and $J=0$, since it is not necessary to remove additional tori. As a third and slightly more complicated example, the $3$-torus $\SSS^1\times \SSS^1\times \SSS^1$ may be obtained by choosing $L$ on $\SSS^3$ to be the Borromean rings, that are three suitably chosen closed curves \cite[Example 8.7]{Prasolov-Sossinsky-book}. Let $ \widetilde  T_0, \widetilde  T_1, \widetilde  T_2$ be the disjoint, solid $3$-tori obtained by constructing a tubular neighbourhood for each of the three circles comprising the Borromean rings on $\SSS^3$. Relating this description to our notation in \eqref{model device}, $\tM_0$ is the complement of an open 3-torus in $\SSS^3$. An example where $\tM_0$ is not a solid torus (but a domain which boundary is a 2-torus) is the Poincar\'e homology sphere. For this manifold, we may take the link $L$ to be the right trefoil knot. We refer to the books of Prasolov and Sosssinsky \cite{Prasolov-Sossinsky-book} and Saveliev \cite{Saveliev-book} for a detailed discussion on the surgery description of $3$-manifolds, as well as diagrams and more details concerning the examples we presented. A visualization of an internally knotted 3-torus which boundary is a 2-torus (as well as other 3-manifolds) has been artistically rendered by Carlo S\`equin in \cite{Sequin}\footnote{See a 3D printed model of an internally knotted torus at the page  {\url{http://gallery.bridgesmathart.org/exhibitions/2011-bridges-conference/sequin}}.}



\begin{figure}[ht]
    \centering
    \includegraphics[scale = 0.5]{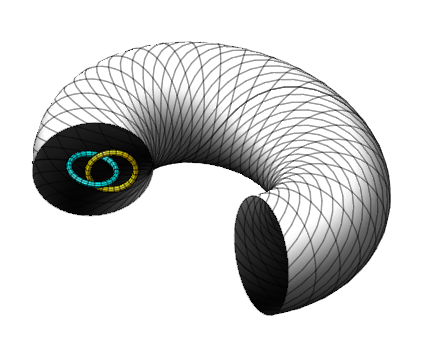}
    \caption{A depiction of a cross section the manifold $\tM$ when $\widetilde M_0$ is a solid $3$-torus $ \widetilde  T_0$. The larger torus is $\tM_0$ in \eqref{model device}, whereas the smaller tori represent $\partial  \widetilde  T_1$ and $\partial  \widetilde  T_2$ in \eqref{model device}.}
    \label{fig:tori-cross-section}
\end{figure}

\begin{figure}[ht]
    \centering
    \includegraphics[scale = 0.5]{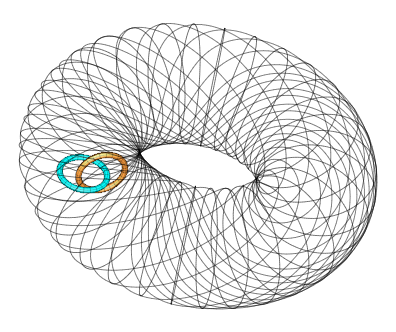}
     \includegraphics[scale = 0.5]{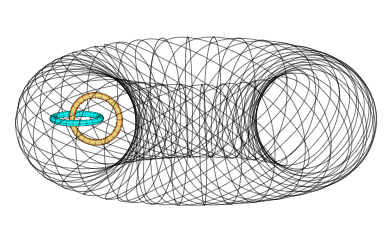}
    \caption{More visualizations of a manifold $\tM$ as described by \eqref{model device} when $\widetilde M_0$ is a solid $3$-torus $ \widetilde  T_0$. The larger torus $\tM_0$ has been made see-through; the smaller tori represent $\partial  \widetilde  T_1$ and $\partial  \widetilde  T_2$ in \eqref{model device}. The ``twisted'' coordinate lines on the boundary of the tori seen here arise from the map $\Psi$. These lines are given by Fermi coordinates defined near the link $L$ which are pulled back to $\widetilde M$ by $\Psi^{-1}$ (see Equation~\eqref{explicit diffeo} and Figure~\ref{fig:gluing2d}). This ``twisting'' of the boundaries can be viewed as a demonstration that the metamaterial corresponding to the metric $\tg$ is anisotropic in directions which follow these coordinate lines.}
    \label{fig:see-through}
\end{figure}

Returning to our transformation map construction of a cloaking device, we equip the manifold $\tM$ with the pullback metric $\tg:= (\Psi^{-1})^*g$. Then, $(\tM,\tg)$ is the physical space candidate for a cloaking device which describes static optical waves in the universe model $(\RR_+\times M, -dt^2+g)$. In particular, the map $\Psi$ is used to define the properties of a cloaking device prescribed by $(\tM,\tg)$ which recreates a good approximation of the optics of time-harmonic waves on the spatial manifold $(M,g)$. We note that
$(\RR_+\times (M\setminus L), -dt^2+g)$ is isometric Lorentzian manifold to $(\RR_+\times \tM, -dt^2+\tg)$.
Time-harmonic waves on $(M,g)$ are then solutions $U(t,x) = e^{-ikt}u(x)$, to the wave equation
\[
\partial^2_tU(t,x) - \Delta_g U(t,x) =e^{-ikt}f(x)
\]
on $\RR_+\times M$, where $\Delta_g$ is the Laplace-Beltrami operator associated to $g$, $k\in \mathbb{C}$ is the wavenumber, $f:M\to \CC$ is a source of the field, and $u:M\to \CC$ solves the Helmholtz equation 
\begin{align}\label{helmholtz}
\Delta_gu+k^2u= \frac{1}{\sqrt{|\det g|}}\sum_{a,b=1}^{3}\partial_a\left(\sqrt{|\det g|}g^{ab}\partial_bu\right)+ k^2u=f
\end{align}
on $M$. Here $g^{ab}$ are the components of the inverse metric $g^{-1}: T^{*}M\times T^{*}M\to [0,\infty)$. The symmetric tensor $\sigma$ with components
\begin{align}\label{conductivity}
\sigma^{ab}(s) = \sqrt{|\det g(s)|}g^{ab}(s)
\end{align}
is called the \emph{conductivity} (in the electrostatic case). We note that the conductivity tensor is in fact a product of a 2-contravariant tensor and a half density. If $V\subset M$ is an open set such that $\overline V\subset M\setminus L$ and $f$ is supported in $\overline V$, we define the \emph{source-to-solution map} as
\begin{align}\label{ss-map-M}
\Lambda_{V}:L^{2}(V,g) \to L^{2}(V,g), \ \ \Lambda_{V}(f) = u|_{V},
\end{align}
where $u$ solves \eqref{helmholtz}. Henceforth in this paper we will consider $L^2(V,g)$ as the subspace of $L^2(M,g)$ consisting of functions with support over $\overline{V}$. The source-to-solution map models the local measurements, where we implement a source $f$ in the set $V$ and observe the produced wave $u$ in the set $V$.

To show our transformation map gives an appropriate prescription of $\tg =\Psi_*g$
we establish that time-harmonic waves in the physical space $(\tM,\tg)$ stably reproduce waves in the virtual space $(M,g)$.
This will be achieved despite the fact that the metric $\tg$ will turn out to be degenerate near the boundary $ \Sigma =\partial \tM$.

To analyze the reproduction stability, for $\eps>0$ let $T(\eps)$ be a closed tubular neighbourhood (with respect to the metric $g$) of radius $\eps$ of the link $L\subset M$. Then, we may obtain an $\eps$-neighbourhood (with respect to the metric $\tg$) of $\Sigma = \partial \tM$ given by $\tT(\eps)=\Psi(T(\eps)\setminus L)$. Write $\tilde\nu_\epsilon$ for the unit outward pointing normal vector field on $ \partial \tT(\eps)$. We consider the Helmholtz equation with Neumann boundary condition,
\begin{equation}\label{e-solutions}
\begin{aligned}
\Delta_{\tg} \tu_\epsilon + k^2\tu_\epsilon &= \tf &&\text{on }\tM\setminus \tT(\eps)\\
\partial_{\tilde{\nu}_\eps}\tu_\epsilon &= 0 &&\text{on } \partial \tT(\eps).
\end{aligned}
\end{equation}
In this setting, if $\widetilde{V}$ is an open, relatively compact subset of $\tM\setminus \overline{\tT(\eps)}$ and $\tf$ is supported in the closure of $\widetilde{V}$, the \emph{source-to-solution map} associated to the problem \eqref{e-solutions} is defined as
\begin{align}
\widetilde{\Lambda}_{\tV,\eps}: L^{2}(\tV,\tg) \to L^{2}(\tV,\tg), \ \ \widetilde{\Lambda}_{\tV,\eps}(\tf) = \tu_{\eps}|_{\tV},\label{ss-map-eps-tilde}
\end{align}
where $\tu_{\eps}$ solves \eqref{e-solutions} with source $\tf$.  

The following theorem is our main result, which roughly speaking states that in the device we design by transformation optics it is possible to simulate, with arbitrary precision, the time-harmonic waves in a static universe that are produced by any compactly supported source. 
\begin{theorem}\label{main-theorem}
Let $(M,g)$ be a closed, orientable, $C^{\infty}$-smooth Riemannian 3-manifold,
 $L\subset M$ be a link (i.e.\ a union of finitely many, pair-wise disjoint, embedded, smooth, simple closed curves) and $\Psi: M\setminus L \to \tM$ be a smooth diffeomorphism where $\tM \subset \RR^3$ is a bounded open set with a smooth boundary.
Set $\widetilde{V}\subset \tM$ to be a relatively compact open set, $V=\Psi^{-1}(\widetilde V)$, and let $\tf\in L^{2}(\widetilde{V},\tg)$. Suppose that $-k^{2}$ is not an eigenvalue of the Laplace operator $\Delta_{g}$ on $(M,g)$. Then, 
\[ 
\lim_{\eps\to 0}\widetilde{\Lambda}_{\tV,\eps}\tf =(\Psi^{-1})^*( \Lambda_{V}(\Psi^{*}\tf))
\]
in $L^{2}(\widetilde{V},\tg)$.

Moreover, if 
$0<\alpha<1$ and $m\in \mathbb N$ satisfy $m+\alpha >1/2$ and 
$f\in C^{m,\alpha}_0(\tV)$, then 
\[ 
\lim_{\eps\to 0}\widetilde{\Lambda}_{\tV,\eps}\tf =(\Psi^{-1})^* (\Lambda_{V}(\Psi^{*}\tf))
\]
in $C^{m+2, \alpha}(\text{cl}_{\tM}\widetilde{V})$, where $\text{cl}_{\tM}\widetilde{V}$ is the closure of $\widetilde V$ in $\tM$.
\end{theorem}

Earlier, we postponed the discussion on the existence of a link in $M$ in \eqref{model device} and the corresponding existence of the transformation map $\Psi$ used Theorem \eqref{main-theorem}. We return to these points now. In this paper we show that we may obtain both a link $L$ and a link blow-up map $\Psi$ as a consequence of the following extension of the classical Lickorish--Wallace theorem. The theorem we prove states that the class of the homeomorphism performing the Lickorish--Wallace surgery contains a $C^\infty$-smooth map whose differential satisfies a technical, explicit bound; the precise statement we placed in Section \ref{const-3manifold}, Theorem \ref{thm:smooth-embedding-precise}. A consequence of our result is the following proposition:
\medskip

\begin{prop}\label{smooth-embedding}
Let $M$ be a smooth, closed, oriented, and connected Riemannian $3$-manifold. Then, there exists a smooth link $L\subset M$ and a $C^\infty(M\setminus L)$-smooth embedding $\Psi: M\setminus L \to \RR^3$. 
\end{prop}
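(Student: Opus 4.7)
The plan is to combine the classical Lickorish--Wallace theorem with a stereographic projection from a point on the surgery link. By Lickorish--Wallace, there is a framed link $L' = K_1' \sqcup \cdots \sqcup K_n' \subset S^3$ together with gluing diffeomorphisms $\phi_i\colon \partial N(K_i') \to \partial(S^1\times D^2)$ (where $N(K_i')$ are disjoint tubular neighborhoods in $S^3$) such that $M$ is diffeomorphic to
\[
W := \bigl(S^3 \setminus \mathrm{int}\, N(L')\bigr) \;\cup_\phi\; \bigsqcup_{i=1}^n (S^1 \times D^2).
\]
Since DIFF $=$ PL in dimension three (Moise, Munkres, Cerf), I would fix such a surgery presentation in the smooth category, and define $L \subset M$ to be the image in $M \cong W$ of the union of the core circles $S^1\times\{0\}$ of the $n$ glued solid tori. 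Then $L$ is a smooth link in $M$.

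Next I would construct a smooth diffeomorphism $F\colon M\setminus L \to S^3\setminus L'$. Deleting the cores of the glued solid tori opens each $S^1\times D^2$ into a punctured solid torus $S^1\times (D^2\setminus\{0\})$, which is smoothly diffeomorphic to the open collar $T^2\times[0,1)$ and matches the analogous collar of $K_i'$ inside $S^3\setminus L'$. The key point is that any self-diffeomorphism of $T^2$ extends over a collar $T^2\times[0,1)$ as a product with the identity on $[0,1)$, so the twist introduced by each $\phi_i$ can be absorbed without changing the diffeomorphism type of the complement. This produces the desired $F$, which may be chosen equal to the identity on $S^3\setminus \mathrm{int}\, N(L')$.

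To land in $\RR^3$, compose with stereographic projection from a point $p$ on the link, say $p\in K_1'$. Because $p \in L'$, we have $S^3\setminus L' \subset S^3\setminus\{p\}$, and $\sigma_p\colon S^3\setminus\{p\}\to \RR^3$ is a smooth diffeomorphism. Setting $\Psi := \sigma_p \circ F$ then gives the required $C^\infty$-smooth embedding $M\setminus L \hookrightarrow \RR^3$.

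The main obstacle is not the topological content, which is classical, but the promotion to a truly smooth map with the explicit control on its derivative near $L$ needed for the subsequent PDE analysis and hinted at in the paragraph preceding the proposition. This forces the proof to specify concrete local coordinate models in a tubular neighborhood of each component $K_i\subset L$, in which $\Psi$ takes an explicit form sending an angular/radial chart around $K_i$ onto a prescribed solid-torus neighborhood of a standard curve in $\RR^3$, and then to verify that these local models glue smoothly to $\sigma_p\circ F$ on the complement of such neighborhoods. Arranging this compatibility, rather than invoking surgery theory abstractly, is the technical heart of the proof.
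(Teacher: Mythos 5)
Your proposal follows essentially the same route as the paper: invoke the Lickorish--Wallace theorem to realize $M$ as surgery on a link $L'\subset\SSS^3$, take $L\subset M$ to be the cores of the glued solid tori, build a diffeomorphism $M\setminus L\to\SSS^3\setminus L'$ by extending the gluing map across the punctured solid tori as a product over a half-open collar, and stereographically project from a point on $L'$. Your ``product with the identity on the radial factor'' is exactly the radial cone extension the paper writes down explicitly in Lemma~\ref{lemma:3-dim-embedding}, and your appeal to the three-dimensional smoothing theorems (Moise/Munkres) compresses what the paper spells out in Lemma~\ref{lemma:gluing-diffeomorphism} (isotopy of the gluing homeomorphism to a diffeomorphism, via Baer/Epstein) together with the Munkres theorem used in the proof of Theorem~\ref{thm:smooth-embedding-precise}. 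You also correctly flag that the derivative control near $L$ stated in \eqref{explicit diffeo} is the additional content needed by the PDE analysis; the paper packages that as the ``controlled close to the link'' property and proves it by computing the partial derivatives of the radial cone extension in polar coordinates, which is exactly the explicit local-model bookkeeping you anticipate.
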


\subsection{Connection to invisibility cloaking and black holes}

Recall that an object is cloaked if it is rendered invisible to measurements. In Theorem \ref{main-theorem} we used a cloaking device to electromagnetically model a static spacetime. We now present an example which illustrates the invisibility aspect of our cloaking construction.

In Theorem \ref{main-theorem}, the physical space $\tM\subset \mathbb R^3$ has the form of \eqref{model device}; hence it is a bounded domain with a non-empty (and possibly disconnected) boundary $\Sigma$. Consider an idealized situation where the shape $\tM$ is filled with a metamaterial such that the phase speed of electromagnetic waves through $\tM$ is described by  a Riemannian metric $\tg$ which is degenerate near the boundary $\Sigma$. From this idealized setting, we can build a more realistic model with a non-degenerate metric as follows. Take $\eps>0$ and consider the domain 
\[
\tM \setminus \tT(\eps)\subset  \tM \subset \mathbb R^3,
\]
where $\tT(\eps)$ is an $\eps$-neighbourhood about $\Sigma$. Again, assume that the domain $\tM \setminus \tT(\eps)$ has been filled with a metamaterial constructed so that the phase speed of the waves is given by the Riemannian metric $\tg|_{\tM \setminus \tT(\eps)}$.  As $\tg|_{\tM \setminus \tT(\eps)}$ is non-degenerate, phase speeds of waves in $\tM\setminus \tT(\eps)$ are bounded from below and above by positive constants. Next, cover the boundary $\Sigma(\eps) =\partial (\tM\setminus \tT(\eps))$ with a reflective material that corresponds to the Neumann-boundary condition  
\[
\partial_{\nu_{\eps}}u = 0\text{ on }\Sigma(\eps).
\]

Now take the observational set $\tV$ of Theorem \ref{main-theorem} to be a large open set in $\tM$, for instance large enough so that $(\tM\setminus \tT(2\eps))\subset \tV$. 
Thus, via Theorem \ref{main-theorem}, if we measure in $\tV$ the wave  $\tilde u$ produced by a source $\tilde f$, the result of this measurement is arbitrarily close (depending on $\eps$) to the results which one would obtain on doing measurements on the set $V = \Psi^{-1}(\tV)$ in the closed manifold $(M,g)$.  In particular, the metamaterial in $\tM\setminus \tT(\eps)$ makes the boundary of the domain and the ``external'' space,  $\mathbb R^3\setminus (\tM\setminus \tT(\eps))$ invisible in these measurements.

To facilitate a visualization of the above example, we make a playful pop culture connection to Theorem \ref{main-theorem}: Assume that the fictional character Harry Potter makes a torus-shaped bag from his cloak of invisibility (which we suppose is build up from metamaterials). Harry Potter then hides inside the bag, thus making the external world invisible.
From the perspective of Theorem \ref{main-theorem}, his observations inside the torus-shaped bag would be similar to those he would make on a closed manifold (in fact, in the manifold $\mathbb \SSS^3$).

We were motivated to propose the cloaking device prescribed by the transformation map $\Psi$ by recent Plank satellite data, the validation of which is given my Theorem \ref{main-theorem}. Our cloaking device aims to simulate the optical behaviour of geometries prescribed by \eqref{model device}. However, it may also enable the simulation of entangled black holes. One may view surgery over a curve (or more generally a link) in a 3-manifold as constructing a manifold with a wormhole geometry \cite{AKL-cosmic}, \cite{AKL-BH}; please also see \cite{GKLU-wormhole} for a concrete example. It is suggested in \cite{AKL-cosmic}, \cite{AKL-BH} that there may be a connection between entangled black holes and wormhole manifolds such as those we consider in \eqref{model device}. Thus, our cloaking device construction may be used to provide an optical device which simulates the entangled geometry of the black holes.

\subsection{Overview}
In Section \ref{universe-model}, we motivate our proof of Proposition \ref{smooth-embedding} by considering a similar situation in two dimensions. We prove that the analogous result to Proposition \ref{smooth-embedding} does not hold in two dimensions. The proof of Proposition \ref{smooth-embedding} is deferred to Section \ref{const-3manifold}. In Section \ref{universe-model} we also establish estimates for the metric $\tg$ on our virtual space. 

Section \ref{thm-proof-section} contains the set up and preliminary work to prove Theorem \ref{main-theorem}. First, in Section \ref{function-spaces} we define the function spaces we work with and establish equivalences between Sobolev spaces on our virtual space $(M,g)$ and those of the same order on $(M\setminus L,g)$ and our physical space $(\tM,\tg)$. Then, in Section \ref{fried-ext}, we define Friedrichs extensions of the operators appearing in \eqref{helmholtz} and \eqref{e-solutions}, as well as their corresponding sesquilinear forms. Section \ref{gamma-section} establishes $\Gamma$-convergence results for the forms in Section \ref{fried-ext}. These convergence results allow us to prove Theorem \ref{main-theorem} in the case where the wavenumber appearing in \eqref{helmholtz} and \eqref{e-solutions} is negative. As we are concerned with wave solutions, in Section \ref{resolvent-section}, we establish convergence of the resolvents associated to the Friedrich's extensions of $\Delta_g$. Lastly, in Section \ref{the-main-proof} we use the convergence results in Section \ref{resolvent-section} to prove Theorem \ref{main-theorem}.

\textbf{Acknowledgements.} TB and ML were partially supported by Academy of Finland, grants 273979, 284715, 312110, and Finnish Centre of Excellence in Inverse Modelling and Imaging.
The PP and VS have been partially supported by Academy of Finland, grants 297256 and 322671.

%
%

\section{Virtual and physical spatial models}\label{universe-model}


In this section, we describe our construction of a piecewise smooth embedding $M\setminus L\to \RR^3$, where $L$ is a link in $M$. This embedding will be a  composition of a piecewise smooth embedding $M \setminus L \to \SSS^3$ and a stereographic projection map. Here we present a heuristic overview of the construction of how we obtain the embedding $M\setminus L\to \SSS^3$, and place the details of the argument in Section \ref{const-3manifold}. To begin, we recall the concept of a link in $M$:
\begin{defn}\label{link-defn}
A subset $L$ is a \emph{link in a manifold $M$} if $L$ is a union of finitely many, pair-wise disjoint embedded circles $S_1,\ldots, S_J$. That is, $L = \bigcup_{j=1}^J S_j$ where $S_j \cap S_k = \emptyset$ for $j\ne k$, and for each $j=1,\ldots, J$, there exists an embedding $\phi_j \colon \SSS^1 \to M$ for which $S_j = \phi_j(\SSS^1)$. Further, we say that a link $L$ in $M$ is \emph{smooth} if we may take each embedding $\phi_j$ to be a smooth embedding. 
\end{defn}
If there exists a link $L\subset M$, we may define a compact tubular neighbourhood $T$ of this link which is diffeomorphic to a union of pairwise disjoint solid tori. Then, we perform a \emph{surgery} of $M$ along the boundary $\partial T$. That is, we remove the interior of $T$ and transform $T$ to a homeomorphic copy $T'$. Then we glue $T'$ to $M\setminus \text{int}(T)$ along the boundary of $T'$. This is analogous to the situation in $2$-dimensions, where a pair of disks on a surface is replaced with a cylinder to obtain a surface with higher genus. Below, we explain how to use surgery to construct a piecewise smooth homeomorphism in the 2-dimensional case to motivate the role surgery near the link $L\subset M$ plays in the construction of the embedding in the 3-dimensional case.



%
%

\subsection{An excursion: $2$-dimensional embedding theorem}

We define a \emph{surface} as a $2$-manifold with or without boundary. The boundary of a surface will always refer to its manifold boundary. 

Closed and orientable surfaces are classified by their genus; we do not define the notion of genus here, please see~\cite{Hatcher} for the appropriate details. We use the genus to represent a closed and orientable surface as the boundary of a $3$-dimensional manifold in $\RR^3$ as follows. 

Recall that the $2$-sphere $\SSS^2\subset \RR^3$, which is the boundary of the closed unit $3$-ball $\bar B^3$, is a surface of genus $0$. Let now $J\in \ZZ_+$. For $j=1,\ldots, J$, we fix embeddings $\psi_j \colon \bar B^2 \times [0,1] \to \RR^3$ having mutually disjoint images which satisfy
\[
\phi_j(\bar B^2 \times [0,1]) \cap \bar B^3 = \phi_j(\bar B^2\times \{0,1\}) \subset \SSS^2
\]
for each $j=1,\ldots, J$. Then 
\[
M = \bar B^3 \cup \phi_1(\bar B^2 \times \{0,1\}) \cup \cdots \cup \phi_J(\bar B^2 \times \{0,1\})
\]
is a $3$-manifold with boundary in $\RR^3$ and the boundary $\Sigma = \partial M$ is a surface of genus $J$. 
Typically, the images of embeddings $\phi_i$ are called \emph{handles of $M$} and the manifold $M$ a \emph{handlebody}.

Having the surface $\Sigma$ at our disposal, we may formulate a $2$-dimensional analog of Proposition \ref{smooth-embedding} as follows.

\begin{prop}
\label{prop:2-dim-embedding}
Let $\Sigma$ be a surface of genus $J\in \ZZ_+$. Then there exists a link $L \subset \Sigma$ consisting of $J$ circles, pair-wise disjoint closed disks $D_1,\ldots, D_{2J} \subset \SSS^2$, and a homeomorphism $\Sigma \setminus L \to \SSS^2 \setminus (D_1\cup \cdots \cup D_{2J})$. 
\end{prop}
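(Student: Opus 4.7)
The plan is to exploit the handlebody presentation of $\Sigma$ given just before the statement, so realise $\Sigma=\partial M$ with $M=\bar B^3\cup\bigcup_{j=1}^J\phi_j(\bar B^2\times[0,1])$. For each handle $j$ introduce the meridian circle
\[
C_j := \phi_j(\SSS^1\times\{1/2\})\subset \Sigma,
\]
and set $L=\bigcup_{j=1}^J C_j$. This is a pairwise disjoint collection of $J$ embedded circles and hence a link in $\Sigma$ in the sense of Definition~\ref{link-defn}. For the sphere side, let $D_j^0:=\phi_j(\bar B^2\times\{0\})$ and $D_j^1:=\phi_j(\bar B^2\times\{1\})$ denote the $2J$ handle attaching disks on $\SSS^2$, and choose strictly smaller concentric closed disks $D_{2j-1}\subset\text{int}\,D_j^0$ and $D_{2j}\subset\text{int}\,D_j^1$ inside them; since the parent disks are pairwise disjoint, so is the family $\{D_i\}_{i=1}^{2J}$, which will serve as the disks whose existence the proposition asserts.

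The homeomorphism $\Sigma\setminus L\to\SSS^2\setminus\bigcup_{i=1}^{2J}D_i$ is then built piece by piece. On the exterior portion $\SSS^2\setminus\bigcup_j(\text{int}\,D_j^0\cup\text{int}\,D_j^1)\subset\Sigma$ (the part of the sphere outside the handles, which already lies in $\Sigma$) take the identity into $\SSS^2$. On each half-handle $\phi_j(\SSS^1\times[0,1/2))$ define a radial \emph{flattening} onto the half-open annulus $D_j^0\setminus D_{2j-1}$: in polar coordinates $(\theta,r)$ on $D_j^0$ inherited via $\phi_j|_{\bar B^2\times\{0\}}$, with $D_j^0=\{r\le r_0\}$ and $D_{2j-1}=\{r\le r_1\}$ for some $0<r_1<r_0$, set
\[
\phi_j(\theta,t)\longmapsto\bigl(\theta,\,(1-2t)r_0+2tr_1\bigr),\qquad t\in[0,1/2).
\]
The closed end $t=0$ maps identically to $\partial D_j^0$, while the open end $t\to1/2$ limits to the excluded boundary $\partial D_{2j-1}$. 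Apply the symmetric construction to $\phi_j(\SSS^1\times(1/2,1])$ to flatten it onto $D_j^1\setminus D_{2j}$.

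It remains to verify that these three types of pieces assemble into a continuous bijection onto $\SSS^2\setminus\bigcup_i D_i$: the identity on the exterior and the flattenings on the half-handles both send the attaching circles $\partial D_j^0,\partial D_j^1$ identically onto themselves in $\SSS^2$, so the pieces glue continuously; the images exactly cover $\SSS^2\setminus\bigcup_i D_i$, since the half-open annuli $D_j^0\setminus D_{2j-1}$ and $D_j^1\setminus D_{2j}$ fill the gap between the exterior and the removed disks; and each piece is individually a homeomorphism of its domain onto its image. The main subtlety I anticipate is bookkeeping: choosing polar coordinates on each $D_j^0, D_j^1$ consistently with the parametrisations $\phi_j|_{\bar B^2\times\{0,1\}}$ so that the boundary matching is literal. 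As a post hoc consistency check, the Euler characteristics $\chi(\Sigma\setminus L)=\chi(\SSS^2\setminus\bigcup_iD_i)=2-2J$ and the number of ends ($2J$) agree, which by the classification of open orientable surfaces would yield the homeomorphism even without the explicit construction.
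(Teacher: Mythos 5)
Your proposal is correct and follows essentially the same approach as the paper: both realise $\Sigma$ as the boundary of a handlebody, place the link $L$ at the meridian circles $\phi_j(\SSS^1\times\{1/2\})$, and build the homeomorphism by keeping the sphere-part of $\Sigma$ fixed while radially flattening each half-handle onto an annulus between the original attaching circle and a smaller concentric circle in $\SSS^2$. The only differences are cosmetic --- your explicit affine radius formula $(1-2t)r_0+2tr_1$ versus the paper's $(1-t)x$ with the inner radius fixed at $1/2$, and your cleaner disk indexing $D_{2j-1},D_{2j}$ --- so the argument is the same.
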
 

A careful reader will recall that the embedding in Proposition \ref{smooth-embedding} is smooth, whereas the embedding $\Sigma\setminus L \to \SSS^2$ in Proposition \ref{prop:2-dim-embedding}, induced by the homeomorphism, is merely topological. The existence of a smooth embedding follows from the classical results of Rad\'o on Riemann surfaces, which states that each topological surface carries a unique smooth structure which in turn implies that homeomorphic smooth surfaces are diffeomorphic.

\begin{proof}[Proof of Proposition \ref{prop:2-dim-embedding}]
We may assume that $\Sigma=\partial M$, where $M$ is the genus $J$ handlebody constructed above. Let also embeddings $\phi_i \colon \bar B^2 \times [0,1] \to \RR^3$ be as in the construction of $M$ for $j=1,\ldots, J$.

For each $j=1,\ldots, J$, let
\[
E_{2j+k} = \phi_j(B^2 \times \{k\}),\;
D_{2j+k} = \phi_j(\bar B^2(1/2) \times \{k\}),\;
\text{and} \;
H_j = \phi_j(\SSS^1\times (0,1)).
\]
First observe that
\[
\Sigma \cap \SSS^2 = \SSS^2 \setminus \left(E_1\cup \cdots \cup E_{2J}\right)
\]
and 
\[
\Sigma = (\Sigma \cap \SSS^2) \cup \left( H_1 \cup \cdots \cup H_J\right);
\]
thus heuristically $\Sigma$ is obtained by removing open disks $E_1,\ldots, E_{2J}$ from $\SSS^2$ and attaching open cylinders $H_1,\ldots, H_J$. 

Let $L \subset \Sigma$ be the link with circles
\[
S_j = \phi(\SSS^1 \times \{1/2\}) \subset H_j
\]
for $j=1,\ldots, J$. Let also 
\[
H_{j-} = \phi_j(B^2\times (0,1/2))\quad \text{and} \quad H_{j+} = \phi_j(B^2\times (1/2,1))
\]
be open cylinders for each $j=1,\ldots, J$; note that $H_j \setminus S_j = H_{j-}\cup H_{j+}$.

We define now a homeomorphism $h \colon \Sigma \setminus L \to \SSS^2 \setminus (D_1\cup \cdots \cup D_{2J})$ as follows. On $\Sigma \cap \SSS^2$, we set $h$ to be the identity. Then, for each $j=1,\ldots,J$, we take, $h|_{H_{j-}}$ to be the unique embedding satisfying 
\[
h(\phi_j(x,t)) = \phi_j((1-t)x,0)
\]
for $(x,t) \in \bar B^2 \times (0,1/2)$. Similarly, we take $h|_{H_{j+}}$ to be the unique embedding satisfying
\[
h(\phi_j(x,t)) = \phi_j(tx,1)
\]
for $(x,t)\in \bar B^2\times (1/2,1)$. 
This completes the proof.
\end{proof}

We remark that for two dimensional manifolds there are invisibility cloaking results where one removes the origin from a two dimensional disc $\mathbb D^2$ of radius $2$ and center $0$ and uses the transformation optics approach with a blow-up diffeomorphism $F:\mathbb D^2\setminus \{0\}\to
\mathbb D^2\setminus \bar{ \mathbb D}^1$, where $\mathbb D^1$ is a disc of radius $1$. These cloaking results are based on the fact that the single point $\{0\}$ has the zero capacitance in $\mathbb D^2$. In light of this, it would be natural to ask if one may construct similar two dimensional models where one removes a set of zero capacitance. In particular, let $S$ be a two dimensional closed manifold and $\{p_1,p_2,\dots,p_J\}\subset S$ be a finite number of points. Could one build a transformation map from $S\setminus\{p_1,p_2,\dots,p_J\}$ to the two-dimensional plane $\RR^2$ such that optical waves on $S\setminus\{p_1,p_2,\dots,p_J\}$ are equivalent to those on the image of $S\setminus\{p_1,p_2,\dots,p_J\}$ in $\RR^2$?


As the capacitance zero sets on $S$ (such as $\{p_1,p_2,\dots,p_J\}$) have Hausdorff dimension zero, the following theorem shows that unlike in the 3-dimensional case of Theorem \ref{smooth-embedding}, in the 2-dimensional case we cannot achieve an embedding into $\RR^2$ by removing a subset of capacitance zero (or generally, a set which Hausdorff dimension is less than one) from a manifold that is not homeomorphic to a sphere. 

\begin{theorem}
Let $S$ be a Riemann surface of positive genus $g>0$ and $E\subset S$ a set of Hausdorff $1$-measure zero, that is, $\haus^1(E)=0$. Then there is no embedding $S\setminus E\to \RR^2$.
\end{theorem}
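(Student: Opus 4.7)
The plan is to argue by contradiction. Suppose a topological embedding $f\colon S\setminus E\to\RR^2$ exists. The strategy is to produce a non-separating simple closed curve $\gamma\subset S$ disjoint from $E$ and to derive a contradiction by showing that $(S\setminus E)\setminus\gamma$ is both connected (from the topology of $S$) and disconnected (via the Jordan curve theorem applied to $f(\gamma)$).

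First, I would locate $\gamma$. Since $S$ has positive genus, there is a non-separating simple closed curve in $S$ together with a smooth annular neighborhood $A\cong \SSS^1\times(-\delta,\delta)$ foliated by non-separating circles $\gamma_t=\SSS^1\times\{t\}$. The projection $\pi\colon A\to(-\delta,\delta)$ is Lipschitz, so Eilenberg's inequality (a coarea-type estimate comparing Hausdorff measures of different dimensions) gives
\begin{equation*}
\int_{-\delta}^{\delta}\haus^0(E\cap\gamma_t)\,dt \;\le\; C\,\haus^1(E)=0.
\end{equation*}
Hence for almost every $t$ the curve $\gamma:=\gamma_t$ avoids $E$ and is non-separating in $S$.

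Next, I would establish that $(S\setminus E)\setminus\gamma$ is connected. Non-separation of $\gamma$ makes $S\setminus\gamma$ a connected open $2$-manifold, and I would invoke the standard fact that an $\haus^1$-null set cannot disconnect a connected open surface: any two points can be joined by a path covered by finitely many charts, and within each chart the path can be perturbed by a radial Eilenberg argument to miss $E$. This yields connectedness of $(S\setminus\gamma)\setminus E$.

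Finally, to reach the contradiction, apply the Jordan curve theorem to the Jordan curve $f(\gamma)\subset\RR^2$, producing $\RR^2\setminus f(\gamma)=U_1\sqcup U_2$ with disjoint open components. Since $\gamma$ is compact and disjoint from $E$, it has a tubular neighborhood $N\cong\SSS^1\times(-\varepsilon,\varepsilon)$ contained in $S\setminus E$ (if $E$ is not a priori closed, a mild shrinking argument using the compactness of $\gamma$ still suffices). Invariance of domain applied to the continuous injection $f|_N$ makes $f(N)$ an open subset of $\RR^2$ containing the Jordan curve $f(\gamma)$, and so $f(N)$ necessarily meets both $U_1$ and $U_2$. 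Consequently $f^{-1}(U_1)$ and $f^{-1}(U_2)$ are disjoint nonempty open subsets of $S\setminus E$ whose union is $(S\setminus E)\setminus\gamma$, contradicting the preceding paragraph.

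The chief technical hurdle is the measure-theoretic accounting: deducing the existence of $\gamma\subset S\setminus E$ and the non-disconnection of $(S\setminus\gamma)\setminus E$ from the single hypothesis $\haus^1(E)=0$. Both rest on the principle that an $\haus^1$-null set is invisible to generic $1$-parameter families of Lipschitz curves, captured uniformly by Eilenberg's inequality; the remaining ingredients (the Jordan curve theorem and invariance of domain) are standard.
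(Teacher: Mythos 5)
Your argument is correct in spirit and takes a genuinely different route from the paper's. You locate a single non-separating simple closed curve $\gamma$ avoiding $E$ (via a coarea/Eilenberg argument on a foliated annulus) and derive a contradiction between the connectedness of $(S\setminus E)\setminus\gamma$ --- which follows from non-separation of $\gamma$ together with the auxiliary fact that $\haus^1$-null sets cannot disconnect a connected open surface --- and the separation forced by the Jordan curve theorem applied to $f(\gamma)$. The paper instead invokes uniformization and a fundamental $2g$-gon to produce \emph{two} simple closed curves $\gamma_\alpha,\gamma_\beta$ avoiding $E$ and crossing transversally at a single point; there the contradiction is that $f(\gamma_\beta)\setminus\{x_0\}$, being connected ($\cong\SSS^1$ minus a point), cannot be separated by the two Jordan domains of $f(\gamma_\alpha)$. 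Your approach avoids uniformization and is conceptually lighter, at the cost of having to invoke (and sketch a proof of) the non-disconnection property of $\haus^1$-null sets, which the paper sidesteps by trading one curve for two.

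There is, however, one concrete error in your write-up: the assertion that $\gamma$ admits a tubular neighborhood $N\cong\SSS^1\times(-\varepsilon,\varepsilon)$ with $N\subset S\setminus E$, with the parenthetical ``a mild shrinking argument\dots still suffices.'' This is false when $E$ is not closed: a countable dense subset $E$ of $S$ has $\haus^1(E)=0$, yet every neighborhood of every point of $\gamma$ meets $E$, so no shrinking helps. When $E$ is closed (the case relevant to the capacity-zero application), the clean fix is to drop $N$ altogether and apply invariance of domain to $f$ on all of $S\setminus E$ (an open $2$-manifold), so $f(S\setminus E)$ is open, contains $f(\gamma)=\partial U_1=\partial U_2$, and therefore meets both $U_1$ and $U_2$. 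For genuinely non-closed $E$ a further argument is required, but note that the paper's own unjustified step --- that the transversal crossing of $\gamma_\alpha$ and $\gamma_\beta$ persists under $f$ so that $f(\gamma_\beta)$ meets both Jordan domains --- implicitly relies on $S\setminus E$ being a manifold near the crossing point, and so sits at the same level of rigor; the gap is not specific to your argument.
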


\begin{remark}
Note that, although the Riemannian metric is not fixed on $S$, the null sets of Hausdorff $1$-measure are well-defined on $S$.
\end{remark}

\begin{proof}
By the Uniformization theorem, there exists a covering map $\pi \colon X \to S$, from the universal cover $X$ to $S$, which is a conformal local diffeomorphism and where $X$ is the Euclidean space $\CC$ in the case $g=1$ or the hyperbolic disk $\mathbb D$ for $g\ge 2$. Let also $\Gamma$ be the deck group of isometries of $\pi$ and let $\Omega$ be a fundamental domain for $\Gamma$. Then $\Omega \subset X$ is a $2g$-gon, whose opposite faces the covering map $\pi$ identifies.

Since $\pi$ is conformal local diffeomorphism, we have that the set $\widetilde E = \pi^{-1}(E)\cap \Omega$ has Hausdorff $1$-measure zero. 

Given that $g>0$, we may fix two different pairs $\alpha,\alpha'$ and $\beta,\beta'$ of opposite faces of $\Omega$ for which the interiors of $\alpha\cup \alpha'$ and $\beta\cup \beta'$ on $\partial \Omega$ do not meet.
 
We fix now line segments $\ell_\alpha \subset \Omega$ and $\ell_\beta \subset \Omega$ as follows. First, consider all line segments $\ell=[x,x']$ in $\Omega$ connecting faces $\alpha$ and $\alpha'$ for which points $x\in \alpha$ and $x'\in \alpha'$ are not corner points and $\pi(x)=\pi(x')$. 

From the fact that $\haus^1(\widetilde E)=0$, we may apply Fubini's theorem to obtain that there exists among these line segments a line segment $\ell_\alpha$ for which $\ell_\alpha \cap \widetilde E = \emptyset$. We fix a line segment $\ell_\beta$ analogously. To this end, we also record the observation that, since $\alpha\ne \beta$ and the end points of the line segments are not corner points, we have that $\ell_\alpha$ and $\ell_\beta$ meet exactly at one point.  

Let now $\gamma_\alpha = \pi(\ell_\alpha)$ and $\gamma_\beta = \pi(\ell_\beta)$. then $\gamma_\alpha$ and $\gamma_\beta$ are closed curves on $S$. Furthermore, since $\Omega$ is a fundamental domain, we have that $\gamma_\alpha$ and $\gamma_\beta$ are simple closed curves, that is, they are homeomorphic to $\mathbb S^1$. Moreover, $\gamma_\alpha \cap E = \gamma_\beta \cap E = \emptyset$ by construction.

Suppose now that there exists an embedding $f \colon S\setminus E \to \RR^2$. Then $f(\gamma_\alpha)$ is a Jordan curve in $\RR^2$. By the Jordan curve theorem, $\RR^2\setminus f(\gamma_\alpha)$ consists of two components $D$ and $D'$ and $\partial D = \partial D' = f(\gamma_\alpha)$. Since the line segment $\ell_\beta$ meets $\gamma_\alpha$ in $\Omega$ transversally in one point, the same holds for $\gamma_\beta$ and $\gamma_\alpha$ in $S$, and further for $f(\gamma_\beta)$ and $f(\gamma_\alpha)$ in $\RR^2$. Let $x_0\in \RR^2$ be the intersection point $f(\gamma_\beta)\cap f(\gamma_\alpha)$. Thus $f(\gamma_\beta)$ intersects non-trivially both sets $D$ and $D'$.

 Let $x_0\in \RR^2$ be the intersection point $f(\gamma_\beta)\cap f(\gamma_\alpha)$. Then $\{ D\cap f(\gamma_\beta), D'\cap f(\gamma_\beta)\}$ is a separation of $f(\gamma_\beta)\setminus \{x_0\}$. This is a contradiction $f(\gamma_\beta)$ is homeomorphic to $\mathbb S^1$ and $\mathbb S^1$ cannot be separated by one point. We conclude that such embedding $f$ does not exist.
\end{proof}


\subsection{Estimates for the physical model metric}\label{coord-est}

Given $M$, by Proposition \ref{smooth-embedding} or its refinement Theorem \ref{thm:smooth-embedding-precise} (please see Section \ref{const-3manifold} and the proof therein), there exists a link $L\subset M$, a domain $D\subset \SSS^3$, and a diffeomorphism $F: M\setminus L\to D$. Consider a point $p_0\in \SSS^3\setminus[D\cup \partial D]$, and let $\pi_{p_0}: \SSS^3\setminus\{p_0\}\to \RR^3$ be the map given by stereographic projection through the point $p_0$. The composition of these maps give a diffeomorphism $$\Psi:M\setminus L\to \tM\subset \RR^3,\ \ \Psi:= \pi_{p_0}\circ F,$$ where $\tM:= \pi_{p_0}(D)$ is a $C^\infty$-smooth manifold embedded in $\RR^{3}$. 

By setting 
\begin{align}
\tg &: T\tM\times T\tM\to \mathbb R_+,\quad \tg = (\Psi^{-1})^*g.
\end{align}
we achieve a Riemannian isometry 
\begin{align}\label{transformation-map}
\Psi:(M\setminus L,g)\to (\tM, \tilde g). 
\end{align}
The submanifold $\Sigma := \partial \tM\subset \RR^3$ is called the \emph{cloaking surface}. We emphasize that the metric $\tg$, represented in the Euclidean coordinates, is not assumed to be bounded from above or below near $\Sigma$. From this point forward, we will use a label with a tilde to denote an object associated to the physical space $(\tM,\tilde g)$ and untilded labels to denote objects associated to the virtual space $(M,g)$. To simplify the notation in this section and the following sections, when the context is clear, we will use $g$ to denote the Riemannian metric on either $M$ or the induced metric $g|_{M\setminus L}$ on $M\setminus L$. 

Recall that the optical and electromagnetic properties of the virtual space are captured by the Riemannian metric $g$, via the conductivity tensor $\sigma = \sqrt{|\det g|}g^{-1}$. In our physical space model $(\tM, \tilde g)$, the conductivity tensor is given by the pushforward of $\sigma$ by $\Psi$:
\begin{align}
\tsig(\tx) &:= \Psi_{*}\sigma(\tx) = \left.\frac{D\Psi(x) \sigma(x)D\Psi^{\top}(x)}{\left|\text{det}\,D\Psi(x)\right|}\right|_{x=\Psi^{-1}(\tx)} = \sqrt{|\det \tg(\tx)|}\,\tg^{-1}(\tx).
\end{align}
Above, $D\Psi$ denotes the differential of $\Psi$ and $D\Psi^{\top}$ denotes the transpose of the matrix $D\Psi$.

We shall see that the quantities $\tg(\tx)$, $\sqrt{|\det \tg(\tx)|}$, and $\tsig(\tx)$ become singular as $\tx\in \tM$ approaches $\Sigma$. In the next section we will demonstrate this behaviour by employing an advantageous coordinate system near $\Sigma$.



About each curve $ S_j\subset L$, we consider the (closed) tubular neighbourhood of radius $r>0$: 
\[
T_{j}(r):=\{x\in M\,:\,\text{dist}_{ g}(x,  S_j)\le  r\}.
\]
 Since the curves $ S_j$ are disjoint, there exists an $0<R\le1$ such that if $r <R$, the elements of the collection $\{T_{j}(r)\,:\, j=1,2,\dots,J\}$ are pairwise disjoint. We further restrict the value $R$ to be smaller than the injectivity radius of $L$ in $M$. To simplify notations, without loss of generality, we assume below that $R\ge 1$.

Then, for any $0<r \le R$, let
\begin{align}
T(r)&:= \bigcup_{j=1}^JT_{j}(r) \label{TubularL}
\end{align} 
be a tubular neighbourhood about the link $L$ and let $V_\theta$ be the inward-pointing unit normal vector to $\partial T(R)$ at $\phi(\{R\}\times\{s\}\times\{j\})$. Now, by our choice of $R$, for all $r\le R$ and for all $j=1,\ldots,J$, the map
 \begin{align*}
&\phi_{norm}: [0,R]\times\SSS^1\times \SSS^1\times\{1,\dots,J\}  \to T(R),\\
& \phi_{norm}(r,\theta,s,j) = \exp_{\phi(R,s,j)}( (R-r)V_\theta),
\end{align*}
is a diffeomorphism from $(0,R]\times\SSS^1\times \SSS^1\times\{1,\ldots,J\}$ to $T(R)\setminus L$. We call the coordinate system given by $x=(r,\theta,s,j)$ \emph{normal coordinates adapted to $L$}. Additionally, we denote by $\partial T(\eps)$ the boundary of $T(\eps)$ and by $\nu_\eps$ the outward-point unit normal vector field on $\partial T(\eps)$. 

Next, we turn our attention to the model $(\tM,\tg)$ and define an analogous coordinates system near the surface $\Sigma\subset \RR^3$ via the map $\Psi = \pi_{p_{0}}\circ F$. For this purpose, consider once again the tori $T(r)$, $0<r\le R$ defined in \eqref{TubularL}. From the map $\Psi$, we obtain corresponding  tori
 \begin{align}
\tT(r):=\{\tx\in\tM\,:\,\tx=\Psi(x), \, x\in T(r)\setminus L\} \subset \tM. \label{TubularSigma}
\end{align} 
We recall that $\Sigma$ is the boundary of the open set $\tM\subset \mathbb R^3$
so that $\Sigma$ is not a subset of $\tM$. However,
the union $\tT(r) = \cup_{j=1}^J\tT_j(r)\subset \tM$ can be considered as the set where
the tubular coordinates associated to $\Sigma$ are defined.

Since $F$ is a diffeomorphism away from the link $L$ and $\tg = (\Psi^{-1})^*g$, for $R>0$ as above the map 
\[ 
 \tilde{\phi}_{norm}:= \pi_{p_{0}}\circ \beta,\]
 where
\begin{align} \label{explicit diffeo}
&\beta:  \overline {B}^2\left(R\right)\setminus \overline {B}^2\left(1/2\right)\times \SSS^1\times\{1,\ldots,J\}\to \tT(R), \\
& \beta(\tr, \tth,s,j) = \exp_{F\circ\phi(R,s,j)}( (2\tr-1)F_{*}V_\theta),\nonumber\\
&\tr = \frac{1+R-r}{2}, \nonumber
\end{align}
is a diffeomorphism. We call the coordinates $(\tr, \tth,\ts,j)$ \emph{normal coordinates adapted to $\Sigma$}. Similar as before, we write $\partial \tT(\eps)$ for the boundary of $\tT(\eps)$ and by $\tilde{\nu}_\eps$ the outward-point unit normal vector field on $\partial \tT(\eps)$.

There is a constant $C>0$, depending on $\Psi$ and its derivatives, such that
in the above normal coordinates adapted to $\Sigma$, we have  
\begin{align*}
&|\tilde g^{a s} (\tx)| \le \frac{C}{(2\tr-1)} \text{ for } a\in\{s,\tth\},&&  |\tilde g^{a \tr} (\tx)|  \le C \text{ for } a\in\{\tr,s\},\\
&|\tilde g^{\tth\tth} (\tx)| \le \frac{C}{(2\tr-1)^{2}},
\end{align*}
where we denote $\tr=\tr(x)$.
Additionally, we have the estimate
\begin{align}
\sqrt{\tilde g}\le C(2\tr-1).\label{est-volume}
\end{align}

Combining the above inequalities, the conductivity $\tilde \sigma := \sqrt{\tilde g}\tilde{g}^{-1}$ satisfies
\begin{align}
|\tilde{\sigma}^{ab}(\tx)|&\le C(2\tr-1) \text{ for } a\in\{\tr,s\},\label{est-conductivity}\\
|\tilde{\sigma}^{a\theta}(\tx)|&\le C \text{ for } \text{ for } a\in\{s,\tth\},\label{est-conductivity2}\\
|\tilde \sigma^{\tth\tth} (\tx)| &\le \frac{C}{2\tr-1}, \label{est-conductivity-rad}
\end{align}
and hence becomes singular as $\tr\to \frac12$. 

\section{The behaviour of optical waves in virtual and physical space}\label{thm-proof-section}


In this section, we inspect the following Helmholtz problem on $(M\setminus L,g)$. Let $V$ be an open and relatively compact set in $M\setminus L$ such that the closure $\overline V$ satisfies $\overline{V}\cap L = \emptyset$. For $f\in L^{2}(M\setminus L,g)$ with support contained in $V$,
let $u\in H^{1}(M\setminus L,g)$ be a function which solves
\begin{align}\label{helmholtz2}
\Delta_gu+k^2u=f \ \ \text{on } M\setminus L
\end{align}
in sense of distributions.  Note that equation \eqref{helmholtz} does not involve any boundary conditions. In Section \ref{fried-ext} Lemma \ref{existence-helmholtz} we show that this problem is uniquely solvable. Therefore, we obtain a well-defined \emph{source-to-solution map} $\Lambda_{V,0}$ associated to problem \ref{helmholtz}:
\begin{align}\label{ss-map-ML}
\Lambda_{V,0}:L^{2}(V,g) \to L^{2}(V,g), \ \ \Lambda_{V,0}(f) = u|_{V},
\end{align}
where $u$ solves \eqref{helmholtz2}. 

We also study Helmholtz boundary value problems related to \eqref{helmholtz2}. Let $V$ and $f$ be as above. Additionally, let $\eps>0$, $T(\eps)\subset M$ denote the open tubular neighbourhoods of $L$ specified earlier by \eqref{TubularL}, and $\nu_{\eps}$ be the outward pointing unit normal vector field on $\partial T(\eps)$. Suppose that $u_\eps\in H^1(M\setminus T(\eps))$ solves
\begin{align}\label{helmholtz-eps}
\Delta_gu_{\eps}+k^2u_{\eps} &= f\quad \text{on } M\setminus T(\eps)\\
\partial_{\nu_{\eps}}u_{\eps} &= 0\quad \text{on } \partial T(\eps)\notag
\end{align}
in weak sense, see \cite{Evans-PDE}. In Section \ref{fried-ext}, we also show that this problem is uniquely solvable. Hence the associated source-to-solution map
\begin{align}\label{ss-map-eps}
\Lambda_{V,\eps}:L^{2}(V,g) \to L^{2}(V,g), \ \ \Lambda_{V,\eps}(f) = u_{\eps}|_{V},
\end{align}
where $u_\eps$ solves \ref{helmholtz-eps} is well-defined.


Once we show that the problems \eqref{helmholtz2} and \eqref{helmholtz-eps} are well defined, we use the transformation map $\Psi:M\setminus L\to \tM$ defined by \eqref{transformation-map}, to prove a relation between the source-to-solution maps given by \eqref{ss-map-eps} and the source-to-solution maps given by \eqref{ss-map-eps-tilde}, as well as a relation between the maps defined by \eqref{ss-map-ML} and \eqref{ss-map-M}. These relations are proved in Lemmas \ref{ss-map-coord} and \ref{ss-map-coord-0} respectively. With this, Theorem \ref{main-theorem} may be proven after we prove the following theorem.
\begin{theorem}\label{main-theorem-modified}
Let $V\subset M\setminus L$ be a relatively compact open set, $f\in L^{2}(V,g)$, and suppose that $k^{2}\in \CC$ is not an eigenvalue of the Laplace operator $-\Delta_{g}$ on $(M,g)$. Then, 
\begin{align}
\lim_{\eps\to 0}\Lambda_{V,\eps}f = \Lambda_{V,0}f \label{ss-map-convergence}
\end{align}
in the norm topology 
of $L^{2}(V,g)$.

Moreover, if additionally 
$m\ge0$ and  $1>\alpha>0$ satisfy $m+\alpha>1/2$, and
$f\in C^{m,\alpha}_{0}(V,g)$, then
\[ 
\lim_{\eps\to 0}\Lambda_{V,\eps}f = \Lambda_{V,0}f
\]
in $C^{m+2, \alpha}(\overline{V},g)$, where $\overline{V}$ is the closure of $V$ in $M$.
\end{theorem}


To prove Theorem \ref{main-theorem}, we first prove Theorem \ref{main-theorem-modified}. In this section, the proof of Theorem \ref{main-theorem-modified} will be split into several preliminary steps, which we group into subsections. These steps are outlined next.  

In Section \ref{function-spaces}, we first use the map $\Psi$ to define and establish equivalences between Sobolev spaces on $(M,g)$, $(M\setminus L,g)$, and $(\tM, \tg)$. 

Next, in Section \ref{fried-ext}, we define the Friedrich's extensions of operators in \eqref{helmholtz-eps} and \eqref{helmholtz} to an appropriate $L^2$-space. We then prove the claimed unique solvability of the problems \eqref{helmholtz2} and \eqref{helmholtz-eps}. Combined with the results in Section \ref{function-spaces}, we will then show that Theorem \ref{main-theorem} follows from Theorem  \ref{main-theorem-modified}.

Sections \ref{gamma-section} and \ref{resolvent-section} contain the bulk of the preliminary work needed to establish the limit \eqref{ss-map-convergence}. In Section \ref{gamma-section}, we use the machinery of $\Gamma$-convergence to prove \eqref{ss-map-convergence} for $k^2>0$. In Section \ref{resolvent-section}, we use classical linear perturbation theory to show \eqref{ss-map-convergence} for other values of $k\in \CC$, in particular for $k^2<0$, which correspond to sinusoidal solutions of \eqref{helmholtz-eps} and \eqref{helmholtz}.

Before proceeding, we define convenient notation which will be used from this point onward. We use $\nabla$ for the covariant derivative associated to $g$, and similarly write $\widetilde{\nabla}$ for the covariant derivative associated to $\tg$. Throughout this paper, $\mu$ denotes the Lebesgue measure on $\RR^3$, and $\mu_g$ (respectively $\mu_{\tg}$) denotes the measure induced on $M$ (respectively $\tM$) by $g$ (respectively $\tg$). We note here that given local coordinates $M$ or $\tM$, the volume forms satisfy $d\mu_g = \sqrt{|\det g|}d\mu$ and $\mu_{\tg} = \sqrt{|\det \tg|}d\mu$. We will abuse notation and write $d$ to denote exterior differentiation of forms either on $M$, $\tM$, or $\RR^3$, and use the context to make the distinction.

%
%
\subsection{Function spaces}\label{function-spaces}

Let $\Omega$ be a $C^\infty$-smooth 3-manifold. Here, the manifold $\Omega$ is either an open manifold (which boundary is not contained in $\Omega$), or a closed manifold. For any smooth Riemannian metric $h$ on $\Omega$, we write $L^1_{\text{loc}}(\Omega,h)$ for the set of all locally $d\mu_h$-integrable functions $u:\Omega\to \CC$.

We denote the space of square integrable functions on $(\Omega,h)$ by
\begin{align*}
L^2(\Omega, h) &= \left\{ u\in L^1_{\text{loc}}(\Omega,h)\,:\, \|u\|_{L^2(\Omega, h)} <\infty\right\},
\end{align*}
where the norm is given by
\[
\|u\|_{L^2(\Omega, h)}^2  = \int_\Omega |u|^2\,d\mu_h.
\]
When $h=g_e$, we use the shorthand notion $L^2(\Omega)=L^2(\Omega, g_e)$. Sometimes, it will be convenient to write $H^0(\Omega,h) = L^2(\Omega,h)$. Also, we observe that if $\sqrt{|\det h|}$ is bounded and measurable with respect to $\mu$, then $L^2(\Omega)\subset L^2(\Omega,h)$. 

Now, given a coordinate chart $(x^1,x^2,x^3): W\subset \Omega\to \RR$ and a multiindex $\alpha = (\alpha_1,\alpha_2,\alpha_3)$, we use $\partial^\alpha := \partial _{x^1}^{\alpha_1}\partial _{x^2}^{\alpha_2}\partial _{x^3}^{\alpha_3}$ to denote the distributional derivative. With this notation set, we next define the following Sobolev spaces. First,   
\begin{align*}
H^1(\Omega, h) &= \left\{ u\in L^1_{\text{loc}}(\Omega,h)\,:\, \forall |\alpha|\le1,\, \partial^\alpha u\in L^1_{\text{loc}}(\Omega,h),\,\|u\|_{H^1(\Omega, h)} <\infty\right\},
\end{align*}
where
\[
\|u\|_{H^1(\Omega, h)}^2 = \|u\|_{L^2(\Omega, h)}^2 + \int_\Omega h(\nabla u,\overline{\nabla u})\,d\mu_h,
\]
$\overline{\nabla u}$ is the complex conjugate of $\nabla u$, and $(\nabla u)^a = \sum_{b=1}^3 h^{ab}\partial_bu$ is the local coordinate expression of the covariant derivative of $u$. We emphasize that $\partial_b u$, $b=1,2,3$, denotes the distributional derivative, when it exists. Second, if $\Omega_1\subset \Omega$ is an open subset,
we define the space $H^1_0(\Omega_1, h)\subset H^1(\Omega, h)$, that is the closure of the smooth  functions that 
are compactly supported in $\Omega_1$,
\[
H^1_0(\Omega_1, h) := \text{cl}_{H^1(\Omega,h)}[C^\infty_0(\Omega_1)].
\]

Third,
\begin{align*}
H^2(\Omega, h) &= \left\{ u\in L^1_{\text{loc}}(\Omega,h)\,:\, \forall |\alpha|\le2,\, \partial^\alpha u\in L^1_{\text{loc}}(\Omega,h),\,\|u\|_{H^2(\Omega, h)} <\infty\right\},
\end{align*}
with norm
\[
\|u\|_{H^2(\Omega, h)}^2 = \|u\|_{H^1(\Omega, h)}^2 + \int_\Omega \|\nabla^2 u\|^2_h\,d\mu_h.
\]
Here, in coordinates $(x^1,x^2,x^3)$ on $\Omega$, we have 
\[
\|\nabla^2 u\|^2_h = \sum_{a,b,c,d=1}^3h^{ab}h^{cd}(\nabla^2 u)_{ac}(\overline{\nabla^2 u})_{bd},
\]
and $\nabla^2 u$ is the Hessian of $u$; it has the local expression
\[
(\nabla^2 u)_{ab} = \partial_a\partial_bu - \sum_{c,d=1}^3\frac12 h^{cd}(\partial_ah_{bd}+\partial_bh_{ad} - \partial_dh_{ab})\partial_cu.
\]
The Hessian is also related to the Laplace-Belrami operator on $\Omega$ through the trace over $h$:
\[
\Delta_hu = \text{tr}_h\nabla^2u.
\]
In local coordinates this becomes
$$\Delta_h u=\sum_{a,b =1}^3\frac{1}{\sqrt{|\det h|}}\partial_{a}\left(\sqrt{|\det h|} h^{ab}\partial_{b}u\right).$$

\begin{lemma}\label{restriction-iso}
Let $p=0,1,2$. Then, the restriction map
\[
\mathfrak{B}: H^p(M,g) \to H^p(M\setminus L,g), \ \ \mathfrak{B}(u) = u|_{M\setminus L} 
\]
is an isomorphism. Moreover, when we consider $M\setminus L$
as a subset $M$, the space  $H^1_0(M\setminus L, g) := \text{cl}_{H^1(M,g)}[C^\infty_0(M\setminus L)]$ satisfies
\[
H^1(M,g) =H^1_0(M\setminus L, g).
\]

\end{lemma}

\begin{proof}
First consider the case when $p=0$. Let $u\in L^2(M,g)$. As $L$ has measure zero with respect to the measure induced on $M$ by $g$, we have
\begin{align}
\|u\|_{L^2(M,g)}^2 
=\|\mathfrak{B}(u)\|_{L^2(M\setminus L,g)}^2.
\end{align}
From this, we see that $\mathfrak{B}$ is well-defined, continuous, injective, and an isometry onto its image. If $v\in L^2(M\setminus L, g)$, by extending $v$ to be zero on $L$, we obtain a function $u^v\in L^2(M,g)$ with $\mathfrak{B}(u^v)= v$. This proves that $\mathfrak{B}$ is surjective, and hence 
$\mathfrak{B}: L^2(M,g) \to L^2(M\setminus L,g)$ is an isomorphism.

Next, let $p=1$. To prove the claim in this case, we need only to show that the link $L\subset M$ has zero capacitance. Then, by \cite[Theorem 2.44]{KKM-sobolev-book} we achieve the desired isometry. 

For $\epsilon>0$, let $T(\eps)$ be a tubular neighbourhood about the link $ L\subset M$, as defined in \eqref{TubularL}, and let $r_0>0$
be such that the tubular neighborhood of radius $r_0$ is well defined.
The 2-capacitance \cite[Section 2.35]{KKM-sobolev-book} of $L$ in $T(r_0)$ is defined as 
\begin{align*}
 \text{Cap}(L) = \inf \{&\|\nabla u\|_{L^2(M)}^2\,:\, u\in H^1(M,g),\, u\ge1\text{ on a neighbourhood of $L$,}\\
 &\quad \text{and supp}(u)\subset \bar T(r_0)\}.   
\end{align*}

As in \cite[Example 2.12]{KKM-sobolev-book} and 
\cite[Thm. 2.2]{KKM-sobolev-book} (see also \cite[Cor 2.39]{KKM-sobolev-book}),
we see that the 2-capacity of the set $L$ in $T(r_0)$ is zero.



Before proceeding with the $p=2$ case, we prove the moreover part of the lemma. Since $L$ has vanishing capacity,  \cite[Theorem 2.45]{KKM-sobolev-book} (see also \cite{KKM-sobolev0}) implies that
\[
H^1_0(M\setminus L,g) = \text{cl}_{H^1(M,g)}[C^\infty_0(M\setminus L)] = H^1(M,g).
\]
Additionally, since $L$ has vanishing capacity,  by
\cite[Theorem 2.44]{KKM-sobolev-book}, we deduce that the map
\[
\mathfrak{B}: H^1(M,g) \to H^1(M\setminus L,g)
\]
is an isometric bijection.

Now suppose that $p=2$. As in the previous cases, it is easy to see that $\mathfrak{B}: H^2(M,g) \to H^2(M\setminus L,g)$ is well-defined, continuous, injective, and an isometry onto its image.

Let $v \in H^2(M\setminus L,g)$; by definition we also have $v \in H^1(M\setminus L,g)$. From the proof for the $p=1$ case, there exists a $u^v\in H^1(M,g)$ with \[
\mathfrak{B}(u^v)=v \]
and 
\[
\|u^v\|_{H^1(M,g)}=\|v\|_{H^1(M\setminus L,g)}. \]  

Let $W\subset M$ be a coordinate neighbourhood and $\psi \in C^\infty_0(W)$. As $H^1_0(M\setminus L,g) = H^1(M,g)$ and $\mathfrak{B}: H^1(M,g) \to H^1(M\setminus L,g)$ is an isomorphism, there exists functions $\phi_j\in C^\infty_0(W\setminus L)$ such that 
\[
\|\phi_j - \psi\|_{H^1(W,g)} \to 0
\]
as $j\to \infty$. Additionally, set $f=\Delta_gu\in L^2(M\setminus L,g)$. From the $p=0$ case proved above, there exists an $h^f\in L^2(M,g)$ with $\mathfrak{B}(h^f)=f$. We thus compute
\begin{align*}
\int_M g(\nabla u^v,\overline{\nabla \phi_j})\, d\mu_g &= \int_{M\setminus L} g(\nabla v,\overline{\nabla \phi_j})\, d\mu_g\\
& = - \int_{M\setminus L} f\bar{\phi}_j \, d\mu_g \\
&= -\int_{M} h^f\bar{\phi}_j \, d\mu_g.
\end{align*}
Taking the limit as $j\to \infty$ and integrating by parts gives us
\[
\int_M \Delta_g u^v \bar{\psi}\, d\mu_g =  \int_{M} h^f\bar{\psi} \, d\mu_g.
\]
Since both $\psi$ and the chart $W$ were arbitrary, we deduce that $u^v\in H^2(M,g)$. As $v \in H^2(M\setminus L,g)$ was arbitrary, this proves that $\mathfrak{B}:H^2(M,g) \to H^2(M\setminus L,g)$ is surjective, which is the last property we needed to confirm that $\mathfrak{B}:H^2(M,g) \to H^2(M\setminus L,g)$ is an isomorphism.

\end{proof}

\begin{lemma}\label{Psi-unitary}
For each $p\ge0$, the diffeomorphism $\Psi: M\setminus L\to \tM$ given by \eqref{transformation-map} induces a unitary operator
\begin{align*}
 \mathfrak{H}:H^p(\tM,\tg) \to H^p(M\setminus L,g), \quad \mathfrak{H}(\tu) = \tu\circ\Psi.   
\end{align*}

\end{lemma}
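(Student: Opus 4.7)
The plan is to exploit that $\Psi : (M\setminus L, g) \to (\tM, \tg)$ is by construction a smooth Riemannian isometry, since $\tg := \Psi_* g$. Under such an isometry every geometric quantity entering the definition of the $H^p$ norm transforms in a norm-preserving way, so the unitarity of $\mathfrak{H}$ should reduce to the change-of-variables formula together with the tensorial invariance of the gradient and the Hessian. I would verify each of $p = 0, 1, 2$ separately, and then observe that $\Psi^{-1}$ is also a smooth isometry from $(\tM,\tg)$ to $(M\setminus L, g)$, which supplies the two-sided inverse $\mathfrak{H}^{-1}(u) = u\circ\Psi^{-1}$ and thereby surjectivity.

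For $p = 0$, the identity $\Psi^*\tg = g$ gives, in any local chart, the Jacobian relation $\sqrt{|\det g|} = |\det D\Psi|\cdot (\sqrt{|\det\tg|}\circ\Psi)$, and the change-of-variables formula then produces $\|\mathfrak{H}(\tu)\|_{L^2(M\setminus L, g)} = \|\tu\|_{L^2(\tM, \tg)}$. For $p = 1$, I would use the naturality of the Riemannian gradient under isometries, namely $d\Psi(\nabla(\tu\circ\Psi)) = (\widetilde{\nabla}\tu)\circ\Psi$, which yields the pointwise identity
\[
g\bigl(\nabla(\tu\circ\Psi),\,\nabla(\tu\circ\Psi)\bigr) = \tg\bigl(\widetilde{\nabla}\tu,\,\widetilde{\nabla}\tu\bigr)\circ\Psi.
\]
Integrating via the same change of variables preserves the $H^1$-seminorm, and combining with the $p = 0$ case gives preservation of the full $H^1$ norm. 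For $p = 2$, the Hessian $\widetilde{\nabla}^2\tu$ is a $(0,2)$-tensor whose pointwise $\tg$-norm is invariant under isometric pullback, so $\|\nabla^2(\tu\circ\Psi)\|_g = \|\widetilde{\nabla}^2\tu\|_{\tg}\circ\Psi$, and an identical integration argument preserves the $H^2$ norm.

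The only technical point I expect to require care is extending these equalities from $C^\infty$ test functions to arbitrary elements of $H^p(\tM,\tg)$; this follows by density together with the chain rule for Sobolev functions composed with a smooth diffeomorphism, and it is unobstructed here because $\Psi$ is a genuine $C^\infty$ diffeomorphism of the open manifold $M\setminus L$ onto $\tM$. In particular, the singular behaviour of $\tg$ as seen from the ambient $\RR^3$ near the cloaking surface $\Sigma$ plays no role in this lemma, since both $g$ on $M\setminus L$ and $\tg$ on $\tM$ are bona fide smooth Riemannian metrics on their respective open manifolds, and the Sobolev norms are intrinsic to those data. For this reason I do not foresee a serious obstacle in executing the argument.
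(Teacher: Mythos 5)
Your proposal is correct and takes essentially the same route as the paper: the paper proves the $p=0$ case by the same change-of-variables computation and obtains surjectivity from $\tu := u\circ\Psi^{-1}$, then declares the $p>0$ cases analogous. You simply make explicit what ``analogous'' means by invoking the naturality of $\nabla$ and $\nabla^2$ under the Riemannian isometry $\Psi$, which is precisely the intended argument.
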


\begin{proof}
Let us consider first the case $p=0$. 

Let $\tu \in L^2(\tM,\tg)$ and set $u:= \mathfrak{L}(\tu)$. For $\tx\in \tM$ we also write $\tx$ for its local coordinate expression.  Since $\Psi$ is a diffeomorphism and $\tg = (\Psi^{-1})^*g$, employing a change of coordinates $x=\Psi^{-1}(\tx)$ gives
\begin{align}
\|\tilde u\|_{L^2(\tM,\tg)}^2 
&= \int_{M\setminus L} |\tu\circ \Psi(x)|^2\sqrt{|\det g(x)|}\left|\text{det}\,D\Psi(x)\right|\,d\tx\notag\\
&=\|u\|_{L^2(M\setminus L,g)}^2.\label{psi-isometry}
\end{align}
By construction, $\mathfrak{H}$ is a linear operator. From the above computation we achieve that $\mathfrak{H}$ is well-defined, continuous, and an isometry onto its image. To prove $\mathfrak{H}$ is unitary, it only remains to show that $\mathfrak{H}$ is surjective.

Let $u\in L^2(M\setminus L,g)$. Define $\tu:= u\circ \Psi^{-1}$. From \eqref{psi-isometry}, $\tu \in L^2(\tM,\tg)$ and we have $\mathfrak{H}(\tu) =u$. This concludes the proof for $p=0$.

The proof for $p>0$ follows similarly by writing formula
\eqref{psi-isometry} for the derivatives $\nabla u,\nabla^2 u,\dots$, $\nabla^p u$ and recalling that since $\tg = (\Psi^{-1})^*g$, the covariant derivatives obey $(\Psi^{-1})^*({\nabla})(w\circ \Psi^{-1}) = \nabla w$ for $w:M\setminus L\to \RR$ and $(\Psi^{-1})^*({\nabla})_{\Psi_*X}(\Psi_*Y) = \nabla_XY$ for vector fields $X,Y\in T(M\setminus L)$. Hence, for all  $u\in H^p(M\setminus L,g)$ we have 
$\tu =u\circ \Psi^{-1} \in H^p(\tM,\tg)$. This completes the argument for the surjectivity of $\mathfrak{H}$ in the case when $p>0$.
\end{proof}

We will also require use of H\"older function spaces. As previously,  let $(\Omega,h)$ be an open or closed $C^\infty$-smooth Riemannian 3-manifold. Given $m\ge0$ and $1>\alpha>0$, we define the $(m,\alpha)$-H\"older space as
\[
C^{m,\alpha}(\Omega,h)=\{ u\in C^m(\Omega,h)\,:\, \|u\|_{C^{m,\alpha}(K,h)}<\infty\hbox{ for all compact $K\subset \Omega$} \}
\]
and 
\[
C^{m,\alpha}_0(\Omega,h) =\{f\in C^{m,\alpha}(\Omega,h):\text{supp}(f) \text{ is a compact subset of $\Omega$}\}.
\]
In local coordinates $x$ on $K\subset \Omega$, the norm $\|u\|_{C^{m,\alpha}(K,h)}$ can be expressed as \[
\|u\|_{C^{m,\alpha}(K,h)} = \sup_{|\beta|\le m, x\in K}\|\nabla^\beta u(x)\|_h + \max_{|\beta|=m}\sup_{x\ne y\in K}\frac{\|\nabla^\beta u(x) - \nabla^\beta u(y)\|_{g_e}}{\text{dist}_h(x,y)^\alpha}
\]
where $g_e$ is the Euclidean metric on the coordinate chart and $\beta$ is a multiindex.

%
%

\subsection{Sesquilinear forms and their Friedrichs extensions}\label{fried-ext}

In this section, we will construct the elliptic operators appearing in equation \eqref{helmholtz} and the Neumann boundary value problem \eqref{helmholtz-eps} by using Friedrich's method. We then will relate these operators which act on functions defined on virtual space to operators which act on functions on physical space. 

We start with prescribing a set of forms which will eventually give rise to our desired operators in \eqref{helmholtz} and \eqref{helmholtz-eps}. Let $H$ be a Hilbert space and let
$\CC\cup\{\infty\}$ be the union of the complex plane and one point infinity. We consider
the map  
$\mathfrak{s}[\cdot, \cdot]: H\times H\to \CC\cup\{\infty\}$,
for which we define the notation
 $\mathfrak{s}[u]=\mathfrak{s}[u,u]$, 
 and 
the \emph{domain} of $\mathfrak{s}[\cdot, \cdot]$ (respectively $\mathfrak{s}[\cdot]$)  that is the subset $D\subset H$ such that 
 \begin{align*}
     \mathfrak{s}[\cdot,\cdot]: D\times D \to \CC \text{ and }
     \mathfrak{s}[\cdot]: D\to \CC
 \end{align*}
 have finite values. In this case, we write $\text{dom}(\mathfrak{s})=D$.

We call a map $\mathfrak{s}[\cdot, \cdot]: H\times H\to \CC\cup\{\infty\}$ an (unbounded) \emph{sesquilinear form} if for $v\in\text{dom}(\mathfrak{s})$ the map $\mathfrak{s}[\cdot, v]$ is linear and $\mathfrak{s}[v,\cdot]$ is conjugate-linear in $\text{dom}(\mathfrak{s})$. Given the form $\mathfrak{s}[\cdot,\cdot]$, we define the associated (unbounded) \emph{quadratic form} $\mathfrak{s}[\cdot]: H\to \CC\cup\{\infty\}$ by $\mathfrak{s}[u]=\mathfrak{s}[u,u]$.

Next consider the Hilbert space $L^{2}(M ,g)$ and designate by 
\begin{align*}
&\AAA_{0}: L^{2}(M ,g)\times L^{2}(M,g)\to \CC\cup\{\infty\}
\end{align*}
the sesquilinear form 
\[
\AAA_0[u,v]=\int_{M} g(\nabla u,\overline{\nabla v})\,d\mu_g
\]
defined on the domain 
 \[
 \text{dom}(\AAA_0) = H^1(M,g).
 \]

Second, let 
\begin{align*}
&\QQ_{0}: L^{2}(M\setminus L ,g)\times L^{2}(M\setminus L ,g)\to \CC\cup\{\infty\}
 \end{align*}
 be defined by
 \[
\QQ_{0}[u,v]= \int_{M\setminus L } g(\nabla u,\overline{\nabla v})\,d\mu_g
 \]
on the domain $\text{dom}(\QQ_0) = H^1(M\setminus L,g)$. 

Third and last, we define a 1-parameter family of forms which will be associated to the operators appearing in \ref{helmholtz-eps}. Let $T(\eps)$, with $\eps>0$, be a tubular neighbourhood of $L$ as in \eqref{TubularL}, and define a sesquilinear form $\QQ_{\eps}$ as
\begin{align*}
&\QQ_{\eps}: L^{2}(M\setminus L ,g)\times L^{2}(M\setminus L ,g)\to \CC\cup\{\infty\}
\end{align*}
defined by
\[
\QQ_{\eps}[u,v] = \int_{M \setminus T(\eps)}  g(du,d \bar{v})\,d\mu_g
\]
on the domain $$\text{dom}(\QQ_\eps) =\{ u \in L^{2}(M\setminus L ,g)\,:\, u|_{M\setminus T(\eps)}\in H^1(M\setminus T(\eps),g)\}.$$


Foremost, we record some properties of the forms $\QQ_{\eps}$ in the following lemma. 

\medskip

%
%
%

\begin{lemma}\label{form-properties}
Let $\AAA_0$, $\QQ_{0}$, and $\QQ_{\eps}$, $\eps>0$, be the forms defined above. Then,
 \begin{enumerate}
     \item the form $\AAA_0$ is densely defined in $L^2(M,g)$ and the forms $\QQ_{0}$ and $\QQ_{\eps}$, $\eps>0$, are densely defined in $L^2(M\setminus L,g)$.
 \end{enumerate}
 Additionally, these forms are
  \begin{enumerate}
    \setcounter{enumi}{1}
     \item closed,
     \item positive, and
     \item symmetric.
 \end{enumerate}
\end{lemma}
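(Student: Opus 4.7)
The plan is to verify the four properties in order, each reducing to a standard fact about Sobolev spaces combined with two observations: the link $L$ has $\mu_g$-measure zero (it has vanishing $2$-capacity, as established in the proof of Lemma \ref{restriction-iso}), and the integrand in $\QQ_\eps$ is essentially the Dirichlet energy on $(M \setminus T(\eps), g)$ since $\sigma(\partial u, \partial v)\,d\mu = g(\nabla u, \nabla v)\,d\mu_g$.

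For density (item 1), I would note that $C^\infty(M) \subset H^1(M,g) = \text{dom}(\AAA_0)$ and that $C^\infty(M)$ is dense in $L^2(M,g)$ by the standard partition-of-unity and mollification argument on the closed manifold $M$. For the $\QQ_\eps$ forms with $\eps \geq 0$, the test space $C_0^\infty(M\setminus L)$ lies in each domain (for $\eps>0$, the restriction to $M\setminus T(\eps)$ is smooth with compact support, hence in $H^1(M\setminus T(\eps),g)$; for $\eps=0$, the inclusion in $H^1(M\setminus L,g)$ is immediate), and it is dense in $L^2(M\setminus L, g)$ because $L$ is a $\mu_g$-null set.

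Positivity (item 3) and symmetry (item 4) are immediate from the pointwise symmetry and positive-definiteness of $g^{ab}$ and $\sigma^{ab} = \sqrt{|\det g|}\,g^{ab}$: each form is an integral of $g(\nabla u, \nabla v)$ or $\sigma(\partial u, \partial v)$, so it is symmetric in $(u,v)$ and nonnegative on the diagonal. For closedness (item 2), the forms $\AAA_0$ and $\QQ_0$ are closed by definition of $H^1(M,g)$ and $H^1(M\setminus L, g)$ as Hilbert spaces under the graph norm, which coincides with the form norm $\|\cdot\|_{L^2}^2 + \mathfrak{s}[\cdot]$. For $\QQ_\eps$ with $\eps>0$, I would take a sequence $(u_n) \subset \text{dom}(\QQ_\eps)$ with $u_n \to u$ in $L^2(M\setminus L, g)$ and $\QQ_\eps[u_n - u_m] \to 0$; then the restrictions $u_n|_{M\setminus T(\eps)}$ are Cauchy in $H^1(M\setminus T(\eps), g)$ and converge to some $v \in H^1(M\setminus T(\eps), g)$, which must equal $u|_{M\setminus T(\eps)}$ by uniqueness of $L^2$ limits, giving $u \in \text{dom}(\QQ_\eps)$ and $\QQ_\eps[u_n - u] \to 0$.

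The main technical point to be careful about is the closedness of $\QQ_\eps$ for $\eps>0$: its domain is not itself a Sobolev space but a subspace of $L^2(M\setminus L, g)$ defined by an $H^1$-condition only on $M\setminus T(\eps)$, so one must verify that $L^2$-convergence of the full sequence on $M\setminus L$ descends to $L^2$-convergence of the restrictions on $M\setminus T(\eps)$ in order to pin down the candidate $H^1$-limit as the restriction of $u$.
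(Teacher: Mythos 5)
Your proof is correct and follows essentially the same overall structure as the paper's (density from $H^1 \subset L^2$, symmetry and positivity from the tensor $\sqrt{|\det g|}\,g^{-1}$, closedness via a Cauchy-sequence argument). The one place where you do something genuinely different — and in fact cleaner — is closedness for $\QQ_\eps$, $\eps>0$. The paper's argument starts from a sequence $u_j \to u$ in $L^2$ with $\QQ_\eps[u_j,u_j] \to \QQ_\eps[u,u]$ and concludes $u \in \text{dom}(\QQ_\eps)$; but the hypothesis already presupposes $\QQ_\eps[u,u] < \infty$, i.e.\ $u \in \text{dom}(\QQ_\eps)$, so the argument as written is circular. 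Your version, which takes $(u_n)$ Cauchy in the form norm, extracts an $H^1(M\setminus T(\eps),g)$ limit of the restrictions, and identifies it with $u|_{M\setminus T(\eps)}$ by uniqueness of $L^2$ limits, is the standard and correct way to verify closedness, and also gives the required $\QQ_\eps[u_n - u] \to 0$. You also correctly flagged the only subtlety: the domain of $\QQ_\eps$ is not itself a Sobolev space, so one must check that $L^2$-convergence on $M\setminus L$ restricts to $L^2$-convergence on $M\setminus T(\eps)$, which is immediate since $M\setminus T(\eps) \subset M\setminus L$. No gaps.
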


\begin{proof}
1.  By definition, $H^1(M\setminus L,g) \subset \text{dom}(\QQ_{\eps})$ and $H^1(M\setminus L,g) = \text{dom}(\QQ_{0})$. As $H^1(M\setminus L,g)$ is dense in $L^2(M\setminus L,g)$ and $\text{dom}(\AAA_0)=H^1(M,g)$ is dense in $L^2(M,g)$, this proves the first claim.

2. We now prove that $\QQ_{\eps}$ is closed for $\eps>0$; the proof is similar for $\QQ_{0}$ and $\AAA_0$.

 Let $v\in \text{dom}(\QQ_\eps)$; notice that
\begin{align*}
\QQ_{\eps}[v,v] \le \|v|_{M\setminus T(\eps)}\|_{H^{1}(M\setminus T(\eps) ,g)}^2.
\end{align*}

Suppose now that $u_{j} \in \text{dom}(\QQ_\eps)$ is a sequence such that $u_{j}\to u$ in $L^2(M\setminus L,g)$ and $\QQ_\eps[u_j,u_j]\to \QQ_\eps[u,u]$ as $j\to \infty$. We compute
\begin{align*}
|\QQ_{\eps}[u_{j}, u_{j}] -  \QQ_{\eps}[u,u]| &\le \left| \|u_{j}\|_{H^{1}(M\setminus T(\eps) ,g)}^2 - \|u\|_{H^{1}(M\setminus T(\eps) ,g)}^2 \right|\\
&=\|u_{j}\|_{H^{1}(M\setminus T(\eps) ,g)}\cdot \left(\sqrt{\QQ_\eps[u_j,u_j]} - \sqrt{\QQ_\eps[u,u]}\right)\\
&\quad+\|u\|_{H^{1}(M\setminus T(\eps) ,g)}\cdot \left(\sqrt{\QQ_\eps[u_j,u_j]} - \sqrt{\QQ_\eps[u,u]}\right).
\end{align*}
 As $\QQ_\eps[u_j,u_j]\to \QQ_\eps[u,u]$ and  $u_{j}\to u$ in $L^2(M\setminus L,g)$  as $j\to \infty$, we must have that $\|u_{j}\|_{H^{1}(M\setminus T(\eps) ,g)}+\|u\|_{H^{1}(M\setminus T(\eps) ,g)}$ is bounded. Thus, $u\in \text{dom}(\QQ_{\eps})$ which implies that $\QQ_{\eps}$ is closed.

3. and 4. The forms $\AAA_0$ and $\QQ_\eps$, $\eps\ge0$ are symmetric and positive as the tensor $\sqrt{|\det g|}g$ is both symmetric and positive.

\end{proof}

Next, we present a classical result built on the ideas of Friedrichs and Kato which demonstrates that the forms $\AAA_0$ and  $\QQ_\eps$, $\eps\ge0$, define self-adjoint operators on an appropriate $L^2$-space, and these operators extend the Laplace-Beltrami operator $\Delta_g$ to a subset of $L^2$-functions. This result holds due to the preceding Lemma \ref{form-properties}.

\begin{theorem}[Friedrich's Extension \cite{edmunds-evans}]\label{freidrichs-thm}
Let $\AAA_0$, $\QQ_\eps$, and $\QQ_\eps$, $\eps>0$, be the sesquilinear forms defined above. Then, there exists densely defined, closed, self-adjoint, linear operators
\begin{align*}
\mathcal{A}_{0}&:L^{2} (M, g)\to L^{2}(M ,g),\\
\HH_{0}&:L^{2} (M\setminus L, g)\to L^{2}(M\setminus L ,g),
\intertext{and}
\HH_{\eps}&:L^{2} (M\setminus L, g)\to L^{2}(M\setminus L ,g),\, \eps> 0, 
\end{align*}
which satisfy
\begin{enumerate} 
   \item  $\text{dom}(\mathcal{A}_0)= H^2(M)$;
  \item  $\text{dom}(\HH_\eps)=\{u\in H^1(M\setminus L): u|_{M\setminus T(\eps)}\in H^2(M\setminus T(\eps)), \partial_{\nu_\eps} u|_{\partial T(\eps)}=0\}$, $\eps>0$;
   \item $\text{dom}(\HH_0)=H^2(M\setminus L)$;
   \item $\AAA_0[u,v] = \langle \mathcal{A}_0 u,  v\rangle_{L^2(M,g)}$;
   \item $\QQ_0[u,v] = \langle \HH_0 u,  v\rangle_{L^2(M\setminus L,g)}$;
   \item $\QQ_\eps[u,v] = \langle \HH_\eps u,  v\rangle_{L^2(M\setminus L,g)}$, $\eps>0$;
   \item for $u\in \text{dom}(\HH_\eps)$, $\eps\ge0$, $\Delta_gu=-\HH_\eps u$ in the distributional sense on $M\setminus {T(\eps)}$ if $\eps>0$ and on $M\setminus L$ if $\eps=0$;
   \item for $u\in \text{dom}(\mathcal{A}_0)$,  $\Delta_gu=-\mathcal{A}_0 u$ in the distributional sense on $M$.
\end{enumerate}

\end{theorem}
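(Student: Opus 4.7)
The plan is to invoke the classical representation theorem of Friedrichs (as formulated in Edmunds--Evans) separately for each of the three form families $\AAA_0$, $\QQ_0$, and $\QQ_\eps$ with $\eps>0$. Lemma \ref{form-properties} already supplies the four hypotheses of that theorem---dense definition, closedness, positivity, and symmetry---so the abstract machinery immediately produces a densely defined, closed, self-adjoint operator associated to each form; call the one attached to $\QQ_\eps$ the operator $\HH_\eps$. Its abstract domain is
\[
\{u\in \text{dom}(\QQ_\eps)\,:\, v\mapsto \QQ_\eps[u,v]\text{ extends to a bounded antilinear functional on } L^2(M\setminus L,g)\},
\]
and it is characterized by the identity $\QQ_\eps[u,v]=\langle \HH_\eps u,v\rangle_{L^2}$ for every $v$ in the form domain. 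The analogous statement produces $\mathcal{A}_0$ from $\AAA_0$.

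The remaining task is to identify these abstract domains with the concrete Sobolev-type spaces listed in the theorem and to check that the action is $-\Delta_g$. I would proceed case by case, using Green's identity in one direction and elliptic regularity in the other. For $\mathcal{A}_0$ on the closed manifold $M$: for $u\in H^2(M,g)$ the identity $\AAA_0[u,v]=\langle -\Delta_g u,v\rangle$ is classical, giving $H^2(M,g)\subset\text{dom}(\mathcal{A}_0)$; the converse follows from global elliptic regularity on a compact manifold without boundary. For $\HH_\eps$ with $\eps>0$: integration by parts on $M\setminus T(\eps)$ produces a boundary term $\int_{\partial T(\eps)}(\partial_{\nu_\eps}u)v\,dS_g$, and demanding that this vanish for all admissible $v$ (whose traces form a dense subset of $H^{1/2}(\partial T(\eps))$) forces the Neumann condition $\partial_{\nu_\eps}u|_{\partial T(\eps)}=0$, while Neumann regularity on the smooth manifold with boundary $M\setminus T(\eps)$ upgrades $u|_{M\setminus T(\eps)}$ to $H^2$. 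For $\HH_0$: the formal integration by parts has no boundary term, but one must exclude the possibility of a hidden constraint at $L$.

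The main obstacle is precisely this last point for $\HH_0$, and it is resolved by Lemma \ref{restriction-iso}. Because $L$ has zero $2$-capacity, that lemma supplies restriction isomorphisms $H^p(M,g)\cong H^p(M\setminus L,g)$ for $p=0,1,2$, and in particular $H^1(M\setminus L,g)=H^1(M,g)=H^1_0(M\setminus L,g)$. Consequently $C_0^\infty(M\setminus L)$ is dense in $H^1(M,g)$, so the distributional identity $-\Delta_g u=\HH_0 u$ obtained on $M\setminus L$ automatically extends to all of $M$; closed-manifold elliptic regularity then places $u$ in $H^2(M)$, which via Lemma \ref{restriction-iso} is identified with $H^2(M\setminus L,g)$. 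Thus no boundary behaviour at $L$ survives the Friedrichs construction, and the listed characterization of $\text{dom}(\HH_0)$ is recovered.
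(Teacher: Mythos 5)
The paper gives no proof of this theorem: it is stated as a cited classical result (Edmunds--Evans), with the remark that the hypotheses are supplied by Lemma \ref{form-properties}. Your proposal reconstructs exactly what that citation leaves implicit: the abstract first representation theorem for closed, nonnegative, symmetric, densely defined forms produces the self-adjoint operators, and the domain identifications follow by Green's identity together with elliptic regularity (global on the closed manifold $M$ for $\mathcal{A}_0$; Neumann regularity on $M\setminus T(\eps)$ for $\eps>0$; and the zero-capacity argument of Lemma \ref{restriction-iso} to pass the distributional identity across $L$ for $\HH_0$). That is the natural and correct route, and your treatment of the $\HH_0$ case --- using $H^1(M\setminus L,g)=H^1_0(M\setminus L,g)=H^1(M,g)$ to rule out any hidden constraint along $L$ --- is precisely the point that needs care and that the paper relies on.

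One observation worth flagging: for $\eps>0$ the form $\QQ_\eps$ is blind to the behaviour of $u$ on $T(\eps)\setminus L$; its domain only constrains $u|_{M\setminus T(\eps)}$. Carried out faithfully, the Friedrichs construction you invoke therefore yields
\[
\text{dom}(\HH_\eps)=\bigl\{u\in L^2(M\setminus L,g)\,:\,u|_{M\setminus T(\eps)}\in H^2(M\setminus T(\eps)),\ \partial_{\nu_\eps}u|_{\partial T(\eps)}=0\bigr\},
\]
with $\HH_\eps u=0$ on $T(\eps)\setminus L$, and no $H^1$ requirement on $u$ inside $T(\eps)$. The paper's bullet instead writes $u\in H^1(M\setminus L)$; your derivation does not actually produce that extra condition, and you should not expect it to, since the form cannot impose regularity where it vanishes. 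This discrepancy is harmless for the paper's application (only $u|_V$ for $V\subset M\setminus T(\eps)$ is ever used), but it is a genuine imprecision in the stated theorem rather than a gap in your argument. Everything else in your proposal is sound.
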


With the above operators defined, we conclude this section by demonstrating  Theorem \ref{main-theorem} and Theorem \ref{main-theorem-modified}. To do so, we first prove that the problems \eqref{helmholtz2} and \eqref{helmholtz-eps} are uniquely solvable, and thus the source-to-solution maps given by \eqref{ss-map-ML} and \eqref{ss-map-eps} are well defined. The following lemma relates solutions to the Helmholtz equation on $(M,g)$ and the modified space $(M\setminus L,g)$.

\begin{lemma}\label{helmholtzM-tM-part1}
Let $k\in \CC$, $f\in L^{2}(M,g)$ and let $f_1=f|_{M\setminus L}$.

\begin{enumerate}
    \item Then $u_1\in H^{1}(M\setminus L,g)$ solves $\Delta_{g}u_1+k^{2}u_1=f_1$ on $M\setminus L$ in sense of distributions $\iff$ there is $u\in H^{1}(M,g)$ such that $u|_{M\setminus L}=u_1$ and $\Delta_{g}u+k^{2}u=f$ on $M$ in sense of distributions.
    \item If $u\in H^{1}(M,g)$ satisfies $\Delta_{g}u+k^{2}u=f$ on $M$ in sense of distributions
and $f\in L^{2}(M,g)$, then $u\in H^{2}(M,g)=\text{dom}(\mathcal A_0)$. 
\end{enumerate}

\end{lemma}

\begin{proof}
1. ``$\impliedby$'': Suppose that $u\in H^{1}(M,g)$ and $\Delta_{g}u+k^{2}u=f$ on $M$. By Lemma \ref{restriction-iso}, the restriction map $\mathfrak{B}:u\to u|_{M\setminus L}$ is a continuous map $\mathfrak{B}: H^{1}(M,g)\to H^{1}(M\setminus L,g)$. Hence, we can set $u_{1}=u|_{M\setminus L}$ and take the restriction of $\Delta_{g}u+k^{2}u=f$ to $M\setminus L$.

``$\implies$'': Assume that $u_1\in H^{1}(M\setminus L,g)$ solves $\Delta_{g}u_1+k^{2}u_1=f_1$ on $M\setminus L$. By Lemma \ref{restriction-iso}, the restriction map 
$$\mathfrak{B}: H^{1}(M,g)\to H^{1}(M\setminus L,g) \ \  \mathfrak{B}(u)= u|_{M\setminus L}$$
is an isomorphism. Thus, there is a distribution $u\in H^{1}(M,g)$ such that $u_{1}= \mathfrak{B}(u)$ and $\Delta_{g}u+k^{2}u=f$ on $M\setminus L$.

Next, let $H^{-1}(M,g)$ denote the dual space to $H^{1}_0(M,g)$ under the standard pairing. That is, $H^{-1}(M,g)$ is the set of all bounded, conjugate linear functionals $H^{1}_0(M,g)\to \CC$ (see \cite[Sections 1.1, 6.1.1]{edmunds-evans}, \cite[Section 5.9]{Evans-PDE}). Recall that $H^1_0(M\setminus L,g)$ is the closure of $C^\infty_0(M\setminus L)$ in $H^1(M,g)$ and that from Lemma \ref{restriction-iso} $H^1(M,g)=H^1_0(M\setminus L,g)$. 

Consider the distribution $b:=\Delta_{g}u+k^{2}u\in \mathcal D'(M)$.
Since $u\in H^{1}(M,g)$, we see that $b\in H^{-1}(M,g)$ and $\text{supp}(b)\subset L$. We compute
\begin{align}
\langle b, \phi\rangle_{ H^{1}(M,g)'\times  H^{1}(M,g)} &=\langle \Delta_{g}u+k^{2}u, \phi\rangle_{ H^{1}(M,g)'\times  H^{1}(M,g)}\notag \\
& = \int_{M\setminus L} ( \Delta_{g}u+k^{2}u)\bar{\phi}\,d\mu_{g} \notag\\
&=0\label{b-representation}
\end{align}
for all $\phi \in C^\infty_0(M\setminus L)$. Thus from Lemma \ref{restriction-iso} and Equation \eqref{b-representation} we see that 
\[
\langle b, v\rangle_{ H^{1}(M,g)'\times  H^{1}(M,g)}=0
\]
for all $v\in H^{1}(M,g)$; in other words $b=0$ in $H^{-1}(M,g)$.

Therefore, $\Delta_{g}u+k^{2}u=f$ in $L^{2}(M,g)$.



2. By writing $f$, $u$, and $\Delta_g$ in local coordinates, we may view $-\Delta_g$ as an elliptic operator on an open set of $\RR^3$. Then, we obtain via an application of \cite[Theorem 8.8]{GilbargTrudinger} the desired claim.

\end{proof}

\begin{lemma}\label{existence-helmholtz}
Let $V$ be an open and relatively compact set in $M\setminus L$ such that $\overline{V} \cap L =\emptyset$, and suppose that $k^2\in \CC$ is not an eigenvalue of $-\Delta_g$ on $M$ or 
 a Neumann eigenvalue of $\Delta_g$ on $M\setminus T(\eps)$. For $f\in L^2(M\setminus L, g)$ with support contained in $V$ we have: 
\begin{enumerate}
    \item for fixed $\eps>0$ and $f$ and $k^2$ as above, the problem \eqref{helmholtz-eps} has a unique solution $u_\eps\in H^1(M\setminus T(\eps))$, and the associated source-to-solution map \eqref{ss-map-eps} is well-defined.
    \item for $f$ and $k^2$ as above, the problem \eqref{helmholtz2} has a unique solution $u\in H^1(M\setminus L, g)$, and the source-to-solution map \eqref{ss-map-ML} is well-defined.
\end{enumerate}
\end{lemma}

\begin{proof}
1. We may view $-\Delta_g$ as an elliptic operator on the manifold $M\setminus T(\eps)$. Then, the claim follows by classical existence and uniqueness theorems for elliptic operators, for example \cite[Theorem 1.9]{edmunds-evans}.

2. First we note that as $(M,g)$ is compact, $H^1(M,g) = H^1_0(M,g)$. Then, by viewing $-\Delta_g$ as an elliptic operator on $(M,g)$, by classical existence and uniqueness theorems for elliptic operators, for example \cite[Theorem 1.3]{edmunds-evans}, the problem
\[
\Delta_gu+k^2u=f \text{ on } M
\]
has a unique solution in $H^1(M,g)$. Together with Part 1 of Lemma \ref{helmholtzM-tM-part1}, this implies that \eqref{helmholtz2} has a unique solution $u\in H^1(M\setminus L, g)$, as claimed. From this, we deduce that the source-to-solution map $\Lambda_{V,0}$ is well-defined.
\end{proof}

Next, we define finite solutions $\tu$ to the Helmholtz equation on $(\tM,\tg)$. We emphasize that the metric $\tg$ is singular on $\tM$. 

\begin{defn}\label{helmholtz-finite}
We say that $\tilde u\in H^1(\tM,\tg)$ is a \emph{finite energy solution} of the Helmholtz equation with source $\tilde f\in L^2(\tM,\tg)$ and wavenumber $k\in \CC$, $k\ne0$, 
if for all $\phi\in H^1(\tM,\tg)$
\begin{align}\label{finite-soln}
\int_{\tM} \left[\tg(\nabla\tilde u,\nabla \bar\phi) -k^2\tilde u\tilde\phi \right]\,d\mu_{\tg} &= -\int_{\tM} \tilde f\bar \phi \,d\mu_{\tg}.
\end{align}
When \eqref{finite-soln} holds, we write 
\begin{align}
\Delta_{\tg}\tu +k^2\tu &= \tilde{f} \quad \text{on } \tM.
\end{align}
\end{defn}

To be precise with the definitions, we recall
the standard definition that for $ u\in H^1(M\setminus L,g)$
the equation
$\Delta_{g}u +k^2u ={f}$ is valid on $M\setminus L$ in sense
of distributions, if for all $\phi\in C^\infty_0(M\setminus L)$,
\begin{align}\label{dist-soln}
\int_{M\setminus L} \left[g(\nabla u,\nabla \phi) -k^2 u\bar\phi \right]\,d\mu_{g} &= -\int_{M\setminus L} f\bar\phi \,d\mu_{g}
\end{align}
where we emphasize that the integrated functions on both sides are supported in a compact subset of $M\setminus L.$

\begin{lemma}\label{helmholtzM-tM-part2}
Let $\Psi$ be given by \eqref{transformation-map}, $k\in \CC$, $f\in L^{2}(M\setminus L,g)$ be compactly supported in $M\setminus L.$  Moreover, let $\tf=f\circ\Psi^{-1}\in
L^{2}(\tM,\tg)$. Then $u\in H^{1}(M\setminus L)$ solves $\Delta_{g}u+k^{2}u=f$ on $M\setminus L$ in sense of distributions $\iff$ $\tilde u=u\circ\Psi^{-1}\in H^{1}(\tM,\tg)$ is a finite energy solution of the equation $\Delta_{\tg}\tu+k^{2}\tu=\tf$ on $\tM$.
\end{lemma}

\begin{proof}

Suppose that $u\in H^{1}(M\setminus L)$ solves $\Delta_{g}u+k^{2}u=f$ on $M\setminus L$ in sense of distributions. 
We show next that $\tu=u\circ\Psi^{-1}$ is a finite energy solution of the Helmholtz equation on $\tM$.
First, by Lemma \ref{Psi-unitary}, we have that $\tu\in H^{1}(\tM)$.

Let $\epsilon>0$. To evaluate integrals to follow, let $T(\epsilon)$ be a tubular neighbourhood about the link $ L\subset M$, as defined in \eqref{TubularL}. Let $\tilde{T}(\epsilon)$ be a tubular neighbourhood about $\Sigma$, as defined in \eqref{TubularSigma}. 
Let $\chi_\eps\in L^\infty(\tM)$ be the indicator function of the set  $M\setminus \tilde{T}(\epsilon)$.

Given $\tilde\phi \in H^1(\tM,\tg)$, 
set $\phi=\tilde \phi\circ \Psi$. 
Then, $\phi\in H^1(M\setminus L)$. By Lemma \ref{restriction-iso}, $\phi$ and $f$ have extensions   $\phi_e\in H^1(M)$ and  $f_e\in L^2(M)$ that satisfy $\phi_e|_{M\setminus L}=\phi$ and $f_e|_{M\setminus L}=f.$ Since $u$ solves $\Delta_gu+k^2u=f$ on $M\setminus L$ in sense of distributions,
it follows from Lemma \ref{helmholtzM-tM-part1} (1)
that there is  $u_e\in H^1(M)$ that satisfies $u_e|_{M\setminus L}=u$ and
$\Delta_{g}u_e+k^{2}u_e=f_e$ on $M$, in sense of distributions.
As $f$ is supported in $V$, the 
essential support of $f_e$ does not intersect $L$, and we see that $u_e$ is $C^\infty$-smooth in some $M$-neighborhood of the set $L$. Moreover, $\Psi: M\setminus T(\epsilon)\to \tM\setminus \tT(\epsilon)$ is a Riemannian isometry, and thus
\begin{align}\label{integration by parts 1}
\int_{\tM} [\tilde g^{-1}(d\tilde u,d \bar{\tilde\phi}) -  k^2 \tilde u\bar{\tilde\phi}+\tilde f\bar{\tilde  \phi}]&\, d\mu_{\tg}\notag\\
&\hspace{-1cm}=  \lim_{\epsilon\to 0}\int_{\tM} \chi_\eps[\tilde g^{-1}(d\tilde u,d \bar{\tilde\phi}) -  k^2 \tilde u\bar{\tilde\phi} +\tilde f \bar{\tilde\phi}] \, d\mu_{\tg} \\ \label{integration by parts 2}
&\hspace{-1cm}= \lim_{\epsilon \to 0}\int_{M\setminus T(\epsilon)} \left[g^{-1}(d u,d \bar\phi) -  k^2  u\bar\phi  +f \bar\phi \right]\, d\mu_{g}\\
\label{integration by parts 3}
&\hspace{-1cm}=\int_{M\setminus L} \left[g^{-1}(d u,d \bar\phi) -  k^2  u\bar\phi  +f \bar\phi \right]\, d\mu_{g}\\
\nonumber
&\hspace{-1cm}=\int_{M} \left[g^{-1}(d u_e,d \bar\phi_e) -  k^2  u_e\bar\phi_e  +f_e \bar\phi_e \right]\, d\mu_{g}\\
&\hspace{-1cm}=0, \nonumber
\end{align}
as $L\subset M$ is zero-measurable and $u_e$ solves $\Delta_{g}u_e+k^{2}u_e=f_e$ on $M$. Therefore $\tilde u$ is a finite energy solution of the Helmholtz equation on $\tM$ for source $\tf$ and wavenumber $k$. 
%


The converse statement follows by observing that
when  $\tilde u$ is a finite energy solution of the Helmholtz equation on $\tM$, 
the formulas \eqref{integration by parts 1}-\eqref{integration by parts 3} hold
for all $\phi\in C^\infty_0(M\setminus L)$ and 
 $\tilde \phi=\phi\circ\Psi^{-1}$. Since the left hand side of formula \eqref{integration by parts 1} is equal to zero, we see that \eqref{integration by parts 3} is zero for all  $\phi\in C^\infty_0(M\setminus L)$. Thus  $u=\tu\circ \Psi$ solves the Helmholtz equation on $M\setminus L$ in sense of distributions.

\end{proof}

Now to show the relation of Theorem \ref{main-theorem} and Theorem \ref{main-theorem-modified}, we next use the diffeomorphism $\Psi:M\setminus L\to \tM$ to relate waves on $(\tM,\tg)$ and waves on $(M\setminus L,g)$.

\begin{lemma}\label{ss-map-coord}
Let $T(\eps) \subset M$ for $\eps>0$ be the tubular neighbourhood of $L$ defined in \eqref{TubularL}, and let $V\subset M\setminus L$ be open. We have for all $f\in L^{2}(M \setminus T(\eps),g)$
\begin{align}\label{cl1}
\Lambda_{V,\eps}(f) &=\Psi^{*} \widetilde{\Lambda}_{\Psi (V), \eps}(\Psi^{-1})^*f.
\end{align}
Additionally, for all $f\in L^{2}(M \setminus L,g)$ having a compact support in $M \setminus L$,
\begin{align}\label{cl2}
\Lambda_{V,0}(f) &=\Psi^{*} \widetilde{\Lambda}_{\Psi (V)}(\Psi^{-1})^*f.
\end{align}
\end{lemma}

\begin{proof} 
As for $\eps>0$ the metric tensor $\tg$  in
the set $\tM \setminus \tilde T(\eps)$ is bounded
from above and below by positive constants and the
sets $M \setminus T(\eps)$ and
$\tM \setminus \tilde T(\eps)$
are  $C^\infty$-smooth manifolds with
a $C^\infty$-smooth boundary, the first claim \eqref{cl1} follows by applying the standard change of variables by 
the $C^\infty$-smooth map $\Psi:M \setminus T(\eps)\to \tM \setminus \tilde T(\eps)$.

The second claim \eqref{cl2} follows by the Lemma \ref{Psi-unitary} and Lemma \ref{helmholtzM-tM-part2}
for sources $f\in
L^{2}(M\setminus L,g)$ and $\tf=(\Psi^{-1})^*f\in
L^{2}(\tM,\tg)$ and the corresponding solutions 
 $u\in
H^{1}(M\setminus L,g)$ and $\tu=(\Psi^{-1})^*u\in
H^{1}(\tM,\tg)$.
\end{proof}

To obtain the relation of Theorem \ref{main-theorem} and Theorem \ref{main-theorem-modified}, it now only remains to prove an equivalence between measurements on the virtual space $(M,g)$ and measurements on the modified space $(M\setminus L,g)$.

\begin{lemma}\label{ss-map-coord-0}
Let $V\subset M\setminus L$ be open. We have for all $f\in L^{2}(M,g)$
\[
\Lambda_{V,0}(\mathfrak{B}f) = \Lambda_{V}f,
\]
\end{lemma}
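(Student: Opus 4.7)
The plan is to reduce the identity $\Lambda_{V,0}(\mathfrak{B}f) = \Lambda_V f$ to the interplay between the Friedrichs extensions $\mathcal{A}_0$ on $L^2(M,g)$ and $\mathcal{Q}_0$ on $L^2(M\setminus L,g)$ via the restriction isomorphism $\mathfrak{B}$ from Lemma \ref{restriction-iso}. Given $f \in L^2(M,g)$, let $u \in H^2(M,g) = \text{dom}(\mathcal{A}_0)$ be the solution of $\Delta_g u + k^2 u = f$ on $M$, so that $u|_V = \Lambda_V f$ by definition.

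First I would apply $\mathfrak{B}$ to obtain $\mathfrak{B}(u) \in H^2(M\setminus L,g) = \text{dom}(\mathcal{Q}_0)$ and $\mathfrak{B}(f) \in L^2(M\setminus L,g)$, both guaranteed by the $p=0$ and $p=2$ cases of Lemma \ref{restriction-iso}. Since $\Delta_g$ acts locally and $M\setminus L$ is an open subset of $M$, the distributional equation $\Delta_g u + k^2 u = f$ on $M$ restricts to $\Delta_g \mathfrak{B}(u) + k^2 \mathfrak{B}(u) = \mathfrak{B}(f)$ on $M\setminus L$; equivalently, by Theorem \ref{freidrichs-thm}, $-\mathcal{Q}_0 \mathfrak{B}(u) + k^2 \mathfrak{B}(u) = \mathfrak{B}(f)$ in $L^2(M\setminus L,g)$.

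Next I would invoke uniqueness to identify $\mathfrak{B}(u)$ with the solution defining $\Lambda_{V,0}(\mathfrak{B}f)$. The hypothesis $-k^2 \in \text{res}(\Delta_g)$ together with the fact that $\mathfrak{B}$ intertwines $\mathcal{A}_0$ and $\mathcal{Q}_0$ as unitarily equivalent self-adjoint operators (they share the same quadratic form once one passes through the isomorphism $H^1(M,g) = H^1_0(M\setminus L,g)$ from Lemma \ref{restriction-iso}) implies that the equation $-\mathcal{Q}_0 v + k^2 v = \mathfrak{B}(f)$ has a unique solution in $\text{dom}(\mathcal{Q}_0)$. Thus this solution must equal $\mathfrak{B}(u)$, and therefore
\begin{equation*}
\Lambda_{V,0}(\mathfrak{B}f) = \mathfrak{B}(u)\big|_V = u\big|_V = \Lambda_V f,
\end{equation*}
where the middle equality is justified because $V \subset M\setminus L$ and $\mathfrak{B}$ is literally pointwise restriction.

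The main obstacle I anticipate is making rigorous the assertion that the spectral hypothesis $-k^2 \in \text{res}(\Delta_g)$ transfers without ambiguity between $\mathcal{A}_0$ and $\mathcal{Q}_0$. This is exactly the content packaged in Lemma \ref{restriction-iso}: $\mathfrak{B}$ is an $L^2$-isometry taking $\text{dom}(\mathcal{A}_0)$ bijectively to $\text{dom}(\mathcal{Q}_0)$ and commuting with the distributional Laplacian, so it conjugates the two operators. Once this is stated cleanly, the resolvent condition and the existence/uniqueness of the relevant solutions become identical on both sides, and the rest of the argument is a bookkeeping exercise.
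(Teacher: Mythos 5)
Your proof is correct and takes essentially the same approach as the paper: both use Lemma~\ref{restriction-iso} and Theorem~\ref{freidrichs-thm} to show that $\mathfrak{B}$ carries $\text{dom}(\mathcal{A}_0)$ onto $\text{dom}(\HH_0)$ and intertwines the two Friedrichs extensions, from which the identity of the source-to-solution maps follows. You spell out the locality and uniqueness steps more explicitly than the paper's terse statement, but the underlying argument is the same.
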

where  $\mathfrak{B}:L^2(M,g)\to L^2(M\setminus L,g)$  is the (bijective) restriction map.

\begin{proof}
After the above results, the proof follows easily but we present the details for clarity.





Let $f\in L^2(M,g)$ and let $u\in H^1(M,g)$ be the solution of $\Delta_gu+k^2u=f$ on $M$ in sense of distributions. In fact, then $u\in H^2(M,g)$.

We recall that $\mathfrak{B}:L^2(M,g)\to L^2(M\setminus L,g)$ and $\mathfrak{B}:H^1(M,g)\to H^1(M\setminus L,g)$ are continuous, and denote $f_1 = \mathfrak{B}(f)\in L^2(M\setminus L,g)$ and $u_1 = \mathfrak{B}(u)\in H^1(M\setminus L,g)$. 
Then $\Delta_gu_1+k^2u_1=f_1$ on $M\setminus L$ in sense of distributions and 
$$ 
\Lambda_{V,0}(\mathfrak{B}f)=u_1|_{V} =u|_{V}=
\Lambda_{V}(f).
$$
This proves the claim.

\end{proof}

Since now our aim is to prove that \eqref{ss-map-convergence} holds, our next natural step is to demonstrate that $\QQ_\eps$ converges to $\QQ_0$ in a suitable sense. This is the goal of the next Section.

%
%
\subsection{$\Gamma$-convergence of the sesquilinear forms}\label{gamma-section}

We recall the definition of $\Gamma$-convergence:

\begin{defn}[\cite{dalmaso} and \cite{braides-handbook}]
Let $(X,\tau)$ be a topological space and $\{\mathcal{F}_{\eps}:X\to [-\infty,\infty],\,\eps>0\}$ be a 1-parameter family of functionals on $X$. For $x\in X$ let $N(x)$ denote the set of all open neighbourhoods $U\subset X$ of $x$, with respect to the topology $\tau$. If 
\[
\mathcal{F}= \sup_{U\in N(x)}\liminf_{\eps\to0}\inf_{y\in U} \mathcal{F}_{\eps} = \sup_{U\in N(x)}\limsup_{\eps\to0}\inf_{y\in U} \mathcal{F}_{\eps},
\]
we say that \emph{$\mathcal{F}_\eps$ $\Gamma$-converges to $\mathcal{F}$} in $X$ with respect to the topology $\tau$ and write 
\[
\mathcal{F}_\eps \overset{\Gamma}{\rightharpoonup} \mathcal{F}.
\]

\end{defn}

In the context of cloaking, $\Gamma$-convergence has been used earlier, for example in \cite{FKR2014,GKLU-quantum}.

We now proceed to formulate the stability of solutions to the Helmholtz problems \eqref{helmholtz-eps}. This is done by considering the quadratic forms $\QQ_{\lambda}$ and $\QQ_{\lambda,\eps}$ associated to the Helmholtz operators $\HH_{\lambda}$ and $\HH_{\lambda,\eps}$ in the framework of $\Gamma$-convergence.  
\medskip


%
%
%

\begin{lemma}\label{lemma-q1}
The quadratic form
\begin{align*}
 \QQ^1_\eps[u] &= \QQ_\eps[u,u], u\in L^{2}(M\setminus L ,g)
\end{align*}
converges pointwisely in $L^2(M\setminus L,g)$ to
\[
\QQ^1[u]:= \QQ_0[u,u], u\in L^{2}(M\setminus L ,g).
\]
Additionally, 
\[
\QQ^1_\eps\overset{\Gamma}{\rightharpoonup}\QQ^1
\]
in the norm topology of $L^{2}(M\setminus L ,g)$ and in the weak topology of $H^{1}(M\setminus L ,g)$. 
\end{lemma}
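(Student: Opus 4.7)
\textbf{Proof plan for Lemma \ref{lemma-q1}.}

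The plan is to handle the pointwise convergence first, then deduce both $\Gamma$-convergence statements by verifying the standard liminf/limsup conditions, with the constant sequence $u_\eps \equiv u$ serving as the recovery sequence in both topologies.

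\emph{Pointwise convergence.} Let $u \in L^2(M\setminus L, g)$. I distinguish two cases. If $u \in H^1(M\setminus L, g)$, then $u|_{M\setminus T(\eps)} \in H^1(M\setminus T(\eps), g)$ for every $\eps > 0$, and the sets $M\setminus T(\eps)$ increase to $M \setminus L$ as $\eps \to 0$. Since $g(\nabla u, \nabla u) \ge 0$, the monotone convergence theorem gives
\[
\QQ^1_\eps[u] = \int_{M\setminus T(\eps)} g(\nabla u, \nabla u)\,d\mu_g \; \nearrow \; \int_{M\setminus L} g(\nabla u, \nabla u)\,d\mu_g = \QQ^1[u].
\]
If instead $u \notin H^1(M\setminus L, g)$, I argue that $\QQ^1_\eps[u] \to \infty$. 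Either $u|_{M\setminus T(\eps_0)} \notin H^1(M\setminus T(\eps_0), g)$ for some $\eps_0>0$, in which case $\QQ^1_\eps[u] = \infty$ for all $\eps \le \eps_0$; or $u|_{M\setminus T(\eps)} \in H^1$ for every $\eps$, but then if $\sup_\eps \QQ^1_\eps[u] < \infty$, monotone convergence would force $\nabla u \in L^2(M\setminus L, g)$ and hence $u \in H^1(M\setminus L, g)$, contradicting the assumption.

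\emph{$\Gamma$-convergence strongly in $L^2$.} The limsup condition is immediate: the constant sequence $u_\eps \equiv u$ converges strongly in $L^2$ and satisfies $\limsup_\eps \QQ^1_\eps[u_\eps] = \QQ^1[u]$ by the pointwise convergence. For the liminf condition, suppose $u_\eps \to u$ strongly in $L^2(M\setminus L, g)$ and, without loss, that $\liminf_\eps \QQ^1_\eps[u_\eps] < \infty$; extract a subsequence with $\QQ^1_\eps[u_\eps] \le C$. The key idea is to exhaust $M\setminus L$ by the compact subdomains $M\setminus T(\eps_0)$. Fix $\eps_0>0$; for $\eps < \eps_0$ we have $M\setminus T(\eps_0) \subset M\setminus T(\eps)$, so the restrictions $u_\eps|_{M\setminus T(\eps_0)}$ are uniformly bounded in $H^1(M\setminus T(\eps_0), g)$ (the $L^2$ bound from strong convergence, the gradient bound from $\QQ^1_\eps[u_\eps] \le C$). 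By weak compactness and uniqueness of the limit forced by strong $L^2$ convergence, $u_\eps|_{M\setminus T(\eps_0)} \rightharpoonup u|_{M\setminus T(\eps_0)}$ in $H^1(M\setminus T(\eps_0), g)$ along a subsequence. Weak lower semicontinuity of the Dirichlet integral (a standard consequence of the Cauchy-Schwarz/Hahn-Banach argument applied to the $g$-inner product) gives
\[
\int_{M\setminus T(\eps_0)} g(\nabla u, \nabla u)\,d\mu_g \le \liminf_{\eps\to 0} \int_{M\setminus T(\eps_0)} g(\nabla u_\eps, \nabla u_\eps)\,d\mu_g \le \liminf_{\eps\to 0} \QQ^1_\eps[u_\eps].
\]
Sending $\eps_0 \to 0$ and applying monotone convergence to the left-hand side yields $\QQ^1[u] \le \liminf_\eps \QQ^1_\eps[u_\eps]$, which in particular certifies $u \in H^1(M\setminus L, g)$.

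\emph{$\Gamma$-convergence weakly in $H^1$.} The recovery sequence is again $u_\eps \equiv u$. For the liminf, if $u_\eps \rightharpoonup u$ weakly in $H^1(M\setminus L, g)$, then for each fixed $\eps_0 > 0$ the restrictions converge weakly in $H^1(M\setminus T(\eps_0), g)$, and for $\eps < \eps_0$ we have the elementary domain monotonicity
\[
\int_{M\setminus T(\eps_0)} g(\nabla u_\eps, \nabla u_\eps)\,d\mu_g \le \int_{M\setminus T(\eps)} g(\nabla u_\eps, \nabla u_\eps)\,d\mu_g = \QQ^1_\eps[u_\eps].
\]
Weak lower semicontinuity on $M\setminus T(\eps_0)$ followed by the limit $\eps_0 \to 0$ via monotone convergence finishes the argument exactly as above.

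The main obstacle (and the only nontrivial step) is the liminf in the strong $L^2$ topology, since the domains $M\setminus T(\eps)$ are varying and the limiting set $M\setminus L$ is not smoothly bounded. The device that overcomes this is the exhaustion by $M\setminus T(\eps_0)$ and the two-parameter limit ($\eps \to 0$ first, then $\eps_0 \to 0$), which replaces a direct weak-compactness argument on $M\setminus L$ by one on each compact subdomain where $H^1$-theory is standard.
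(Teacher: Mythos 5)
Your proof is correct, and it takes a genuinely different route from the paper's. You verify the liminf and limsup conditions in the definition of $\Gamma$-convergence directly: the limsup via the constant recovery sequence $u_\eps \equiv u$ (using pointwise monotone convergence), and the liminf via an exhaustion of $M\setminus L$ by the fixed subdomains $M\setminus T(\eps_0)$, weak compactness in $H^1(M\setminus T(\eps_0))$, weak lower semicontinuity of the Dirichlet energy, and a final monotone-convergence limit in $\eps_0$. The paper instead observes that the family $\{\QQ^1_\eps\}$ is monotone increasing as $\eps\downarrow 0$, proves that each individual $\QQ^1_\eps$ is lower semicontinuous on $L^2(M\setminus L,g)$ (which it does via the same weak-compactness/weak-lsc mechanism that powers your liminf step, but applied at a single fixed $\eps$), and then cites Dal Maso's Proposition 5.4 (increasing families of lsc functionals $\Gamma$-converge to their pointwise supremum) for the strong $L^2$ statement and Proposition 5.14 for the weak $H^1$ statement. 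Your approach is more self-contained and elementary; the price you pay is the two-parameter exhaustion argument, which the paper avoids by deferring to the structure theory of $\Gamma$-limits of monotone families. Both arguments are sound; the only technical subtlety left implicit on both sides is that the weak $H^1$ topology is metrizable only on bounded sets, so sequential and topological $\Gamma$-convergence coincide here only because the forms are equicoercive. This is standard and does not affect the substance.
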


\begin{proof}
First we show that $\QQ^1_\epsilon\to\QQ^1$ pointwisely in $L^{2}(M\setminus L ,g)$. Let $u\in L^{2}(M\setminus L ,g)$.

If $u\notin H^{1}(M\setminus L ,g)$, then $\QQ^1_\epsilon[u]=\infty = \QQ^1[\tu]$ for all $\epsilon>0$. 

Thus, suppose that $u\in H^{1}(M\setminus L ,g)$. 
Since the manifold $M$ is compact, there exists a finite collection of open sets $W_j\subset M$, $j=1,\dots, K$, which cover $M$: $M=W_1\cup W_2\cup \dots W_K$. Thus, we may express $u$ as a finite sum $\sum_{j=1}^K u_j$ where $u_j \in H^1(W_j\setminus L, g)$. Thus for simplicity of the argument that follows, we assume that $u$ is supported in a set $W\setminus L\subset M\setminus L$ and there are local coordinates $x:W\setminus L\to \RR^3$  on $M$. Next, we write $u(x)$, $x\in W\setminus L$, for the coordinate expression of $u$.


For $m\in \mathbb{N}$ and $\xi\in \CC^3$, let 
\[
w_m({x},\xi):= \chi_{1/m}({x})\sqrt{|\det g({x})|}{g}^{ab}({x})\xi_a\bar{\xi}_b,
\]
where $a,b\in\{1,2,3\}$ and 
$ \chi_{M\setminus  T(1/m)}$ is a cutoff function which vanishes on $T(1/m)$. For $\xi$ fixed, each $w_m(\cdot,\xi)$ is measurable with respect to the Euclidean Lebesgue measure $d\mu_e$ on the coordinate chart $x(W)\subset \RR^3$. Further, these functions satisfy 
\begin{align}\label{Q1-inc}
0\le w_m(\cdot,\xi)\le w_{m+1}(\cdot,\xi)
\end{align}
for all $m\in \mathbb{N}$. As $m\to\infty$,
\begin{align}\label{Q1-pointwise-M}
w_m(\cdot,\xi)\to w(\cdot,\xi):= \sqrt{|\det g|}g^{ab}(\cdot)\xi_a\bar{\xi}_b.
\end{align} 
Using the above monotone pointwise convergence of $\{w_m(\cdot,\xi)\}_{m\in\mathbb{N}}$ in $M\setminus L$ and Lebesgue monotone convergence theorem,
\begin{align*}
 \lim_{\epsilon\to0}\QQ^1_\epsilon[u] &= \lim_{m\to\infty} \int_{W\setminus L } w_m(x,\partial u({x}))\, d\mu_e(x)\\
 & = \int_{W\setminus L } {g}^{ab}({x})\partial_a u({x})\partial_b\bar{u}({x})\,d\mu_g(x) = \QQ^1[u].
 \end{align*}
This demonstrates the claimed pointwise convergence $\QQ^1_\epsilon\to\QQ^1$ as $\eps\to0$. From \eqref{Q1-inc}, we see $\QQ^{1}_{\eps}[u]\ge0$ for all $u\in L^{2}(M\setminus L ,g)$ and all $\eps\ge0$.

Without loss of generality, let $\eps=\frac1m$ for $m\in\mathbb{N}$ and consider the sequence $\{\QQ^1_m\}_{m\in\mathbb{N}}$. We will now prove that $\QQ^1_m$ $\Gamma$-converges to $\QQ^1$ as $m\to\infty$ strongly in $L^2(M\setminus L,g)$.

Since $\QQ^1_m\to \QQ^1$ pointwisely, if additionally $\{\QQ^1_m\}_{m\in\mathbb{N}}$ is an increasing sequence of lower semi-continuous functionals, then \cite[Proposition 5.4]{dalmaso} implies the desired $\Gamma$-convergence in the strong topology of $L^2(M\setminus L,g)$. 
 From \eqref{Q1-inc} we see that $\{\QQ^1_m\}_{m\in\mathbb{N}}$ is increasing. Thus to conclude the proof we next show that each $\QQ^1_m$ is a lower semi-continuous function on $L^{2}(M\setminus L ,g)$.
 
 Indeed, let $u\in L^2(M\setminus L,g)$ and let $(u_\ell)_{\ell\in \mathbb{N}}\subset L^2(M\setminus L,g)$ be a sequence converging to $u$ in $L^2(M\setminus L,g)$.  
 Suppose that $\liminf_{\ell\to \infty}\QQ_\eps^1(u_\ell)< \infty$; otherwise there is nothing to prove. By definition of the limit inferior, there exists a subsequence 
  $(u_{\ell_j})_{j\in \mathbb{N}}$ of $(u_\ell)_{\ell\in \mathbb{N}}$ such that the sequence $\|u_{\ell_j}|_{M\setminus T(\eps)}\|_{H^1(M\setminus T(\eps))}$ is uniformly bounded.
Again, by choosing a subsequence, we can assume that the functions  $u_{\ell_j}|_{M\setminus T(\eps)}$
converge weakly in $H^1(M\setminus T(\eps))$.
Then, by the weak lower semi-continuity of the norm in the weak topology of $H^1(M\setminus T(\eps),g)$, we see that
\[
\QQ_\eps^1[u]\le \liminf_{\ell\to \infty}\QQ_\eps^1[u_\ell].
 \]
This demonstrates that $\QQ^1_\eps$ is lower semicontinous in the strong topology of $L^2(M\setminus L,g)$ and completes the proof of $\Gamma$-convergence in the strong topology of $L^2(M\setminus L,g)$.

Now, as $g$ is a $C^\infty$-smooth Riemannian metric on $M$, and since we have the pointwise convergence \eqref{Q1-pointwise-M}, by \cite[Proposition 5.14]{dalmaso}, $\QQ^1_\eps$ $\Gamma$-converges to $\QQ^1$ in the weak topology of $H^1(M\setminus L,g)$.

\end{proof}

%
%
%

\begin{lemma}\label{lemma-q2}
For $\lambda \in(-\infty,0)$, and every $u\in L^{2}(M\setminus L ,g)$, the functional
\[
\QQ^2_{\lambda,\eps}[u]:=  \lambda\int_{M\setminus T(\eps) }|u|^2\,d\mu_g,
\]
converges pointwisely in $L^2(M\setminus L,g)$ to
\[
\QQ^2_\lambda[u]:=  \lambda\int_{M\setminus L }|u|^2\,d\mu_g.
\]
 Moreover,
\[
\QQ^2_{\lambda,\eps}[u]\overset{\Gamma}{\rightharpoonup} \QQ^2_{\lambda}[u]
\]
with respect to the norm topology of $L^2(M\setminus L ,g)$.
\end{lemma}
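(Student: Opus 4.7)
The plan is to prove the stronger property of \emph{continuous convergence} of $\QQ^2_{\lambda,\eps}$ to $\QQ^2_\lambda$ in the strong $L^2(M\setminus L,g)$ topology, namely that $\QQ^2_{\lambda,\eps}[u_\eps]\to\QQ^2_\lambda[u]$ whenever $u_\eps\to u$ strongly in $L^2(M\setminus L,g)$. This single statement delivers both conclusions of the lemma at once: the pointwise convergence is the special case $u_\eps\equiv u$, while for the $\Gamma$-convergence the $\liminf$-inequality is immediate from the full limit and the constant sequence $u_\eps\equiv u$ serves as a recovery sequence. Note that, in contrast to Lemma \ref{lemma-q1}, the sequence $\QQ^2_{\lambda,\eps}$ is not monotone in the direction needed to invoke the lower-semicontinuous-envelope criterion of \cite{dalmaso} directly, so a hands-on verification is the cleanest route.

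To establish continuous convergence, the first step is the decomposition
\begin{align*}
\QQ^2_{\lambda,\eps}[u_\eps] \;=\; \lambda\,\|u_\eps\|_{L^2(M\setminus L,g)}^2 \;-\; \lambda\int_{T(\eps)\setminus L}|u_\eps|^2\,d\mu_g,
\end{align*}
valid because $L\subset T(\eps)$ and $\mu_g(L)=0$. The first term converges to $\lambda\,\|u\|_{L^2(M\setminus L,g)}^2 = \QQ^2_\lambda[u]$ by continuity of the $L^2$-norm under strong convergence, so it suffices to show that the second integral tends to zero. Applying the pointwise bound $|u_\eps|^2 \le 2|u|^2 + 2|u_\eps - u|^2$, I would then estimate
\begin{align*}
\int_{T(\eps)\setminus L}|u_\eps|^2\,d\mu_g \;\le\; 2\int_{T(\eps)\setminus L}|u|^2\,d\mu_g \;+\; 2\,\|u_\eps-u\|_{L^2(M\setminus L,g)}^2,
\end{align*}
where the first summand tends to zero by dominated convergence (since $\chi_{T(\eps)\setminus L}|u|^2\to 0$ pointwise on $M\setminus L$ and is dominated by $|u|^2\in L^1(M\setminus L,g)$) and the second tends to zero by hypothesis on $u_\eps$.

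I expect the only subtlety to be the sign of $\lambda$: since $\lambda<0$, multiplication by $\lambda$ reverses inequalities, which would require careful attention if one instead verified the $\liminf$/$\limsup$ conditions of $\Gamma$-convergence directly in terms of $\QQ^2_{\lambda,\eps}$ (for instance, the obvious monotonicity $\int_{M\setminus T(\eps)}|u_\eps|^2\le \int_{M\setminus L}|u_\eps|^2$ becomes the reverse inequality after multiplying by $\lambda$). Working at the level of the underlying non-negative integrals, where convergence is a genuine limit, and only multiplying by $\lambda$ at the end avoids this bookkeeping issue. Aside from this, the argument is a short application of dominated convergence together with continuity of the $L^2$-norm.
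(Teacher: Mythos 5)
Your argument is correct and does establish the lemma, but it takes a different route from the paper, and one of your stated motivations for doing so is slightly off. You verify \emph{continuous convergence}: $\QQ^2_{\lambda,\eps}[u_\eps]\to\QQ^2_\lambda[u]$ whenever $u_\eps\to u$ in $L^2(M\setminus L,g)$, via the split $\QQ^2_{\lambda,\eps}[u_\eps]=\lambda\|u_\eps\|_{L^2}^2-\lambda\int_{T(\eps)\setminus L}|u_\eps|^2\,d\mu_g$, dominated convergence, and the triangle-type bound $|u_\eps|^2\le 2|u|^2+2|u_\eps-u|^2$. This is a clean, self-contained argument; continuous convergence is strictly stronger than $\Gamma$-convergence and recovers both the pointwise convergence and the $\Gamma$-limit at once, as you say. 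The paper instead observes that for $\lambda<0$ the family $\eps\mapsto\QQ^2_{\lambda,\eps}$ is \emph{decreasing} as $\eps\to 0$, checks pointwise convergence by dominated convergence, notes $\QQ^2_\lambda$ is already lower semicontinuous (so equals its own envelope), and invokes Dal Maso's criterion for decreasing sequences (that a decreasing family converging pointwise $\Gamma$-converges to the lower semicontinuous envelope of the pointwise limit). Your remark that the family is ``not monotone in the direction needed to invoke the lower-semicontinuous-envelope criterion of \cite{dalmaso} directly'' is therefore not quite right: the decreasing direction is exactly the direction that criterion (as opposed to the increasing-sequence criterion used for $\QQ^1_\eps$ in Lemma \ref{lemma-q1}) is built for, and the paper uses it. Your alternative is nonetheless perfectly valid, avoids citing a general $\Gamma$-convergence theorem, and proves a marginally stronger statement.
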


\begin{proof}
We first note that $\text{dom}(\QQ^2_{\lambda}) = L^2(M\setminus L ,g)$ and is lower semicontinuous on $L^2(M\setminus L ,g)$. In particular, it agrees with its lower semicontinuous envelope, $sc^- \QQ^2_{\lambda}$. 

Given that $\lambda <0$, if $\epsilon_{2}> \eps_1>0$, we have $ \QQ^2_{\lambda,\eps_2}> \QQ^2_{\lambda,\eps_1}$. Thus $\QQ^2_{\lambda,\eps}$ decreases as $\eps\to0$.

Let $u\in L^2(M\setminus L,g)$. Then, using the Lebesgue dominated convergence theorem we compute
\[
\left|\QQ^2_{\lambda,\eps}[u] - \QQ^2_{\lambda}[u]\right| = \left|-\lambda\int_{ T(\epsilon)}|u|^2 d\mu_{g}\right|\to 0.
\]
That is, $\QQ^2_{\lambda,\eps}\to \QQ^2_{\lambda}$ pointwisely in $L^2(M\setminus L,g)$. 

Then, by \cite[Proposition 5.7]{dalmaso}, $\QQ^2_{\lambda,\eps}$ $\Gamma$-converges to $sc^-( \QQ^2_{\lambda})= \QQ^2_{\lambda}$ in $L^{2}(M\setminus L ,g)$, as desired.

\end{proof}

\begin{defn}
Let $\eps\ge0$. The \emph{resolvent set} of $\HH_{\eps}$ , denoted by $\text{res}(\HH_{\eps})$, is the set of $\lambda\in \CC$ such that the associated \emph{resolvent operator} 
\[
\Res_\eps(\lambda): L^{2}(M\setminus L ,g)\to L^{2}(M\setminus L ,g), \ \ \Res_\eps(\lambda)\tf:= (\HH_{\eps} - \lambda)^{-1}\tf,
\]
is bounded and satisfies 
\[
(\HH_{\eps} - \lambda)\Res_\eps(\lambda) = \Res_\eps(\lambda)(\HH_{\eps} - \lambda) = \text{Id}_{L^{2}}.
\]


Then, the \emph{spectrum} of $\HH_{\eps}$ is the complement set $\text{spec}(\HH_{\eps}):= \CC\setminus \text{res}(\HH_{\eps})$.


\end{defn}

%
%
%
\medskip
\begin{corollary}\label{dal-maso-cor}
Let $\lambda\in \CC$. Then
\begin{enumerate}
\item $\QQ_{\eps}^1+\QQ^2_{\lambda,\eps} \to \QQ^1 +\QQ^2_{\lambda}$ pointwisely in $L^2(M\setminus L ,g)$.
\item $\QQ^1_{\eps} +\QQ^2_{\lambda,\eps} \overset{\Gamma}{\rightharpoonup} \QQ^1+\QQ^2_{\lambda}$ in the norm topology of $L^2(M\setminus L,g)$.
\item for $\lambda>0$, the operator $\mathcal{R}_\eps(\lambda)$  converges in the norm topology of $L^2(M\setminus L ,g)$ to the resolvent operator $\mathcal{R}_0(\lambda)$ associated to $\HH_0$.
\end{enumerate}
\end{corollary}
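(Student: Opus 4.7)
The plan is to dispatch Parts 1 and 2 directly from Lemmas \ref{lemma-q1} and \ref{lemma-q2}, and to reduce Part 3 to the fundamental theorem of $\Gamma$-convergence applied to a suitably normalized energy functional.

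Part 1 is essentially immediate. The dominated convergence argument in Lemma \ref{lemma-q2} uses only the integrability $u \in L^2$ and in no way the sign of $\lambda$, so $\QQ^2_{\lambda,\eps} \to \QQ^2_\lambda$ pointwisely for every $\lambda \in \CC$. Adding this to the pointwise convergence $\QQ^1_\eps \to \QQ^1$ supplied by Lemma \ref{lemma-q1} yields the claim.

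For Part 2, the key observation is that $\QQ^2_{\lambda,\eps}$ is not only pointwisely convergent but in fact \emph{continuously} convergent: whenever $u_\eps \to u$ strongly in $L^2(M\setminus L, g)$,
\begin{equation*}
\bigl|\QQ^2_{\lambda,\eps}[u_\eps]-\QQ^2_\lambda[u]\bigr| \le |\lambda|\bigl(\|u_\eps\|_{L^2}+\|u\|_{L^2}\bigr)\|u_\eps-u\|_{L^2} + |\lambda|\int_{T(\eps)}|u|^2\,d\mu_g \longrightarrow 0
\end{equation*}
by Cauchy--Schwarz and dominated convergence. The two conditions defining $\Gamma$-convergence of the sum in the strong $L^2$ topology now follow by assembly: the liminf inequality by adding $\liminf_\eps \QQ^1_\eps[u_\eps] \ge \QQ^1[u]$ (Lemma \ref{lemma-q1}) to the exact limit $\lim_\eps \QQ^2_{\lambda,\eps}[u_\eps] = \QQ^2_\lambda[u]$; the recovery sequence by taking the one provided by Lemma \ref{lemma-q1} and invoking continuous convergence of the $\QQ^2$ piece along it.

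For Part 3 I would realize $\Res_\eps(\lambda)f = (\HH_\eps - \lambda)^{-1}f$ as the unique minimizer of a strictly convex equi-coercive functional. Taking $\lambda$ in the coercive regime (which, given $\HH_\eps\ge 0$, means $\lambda<0$ in the natural sign convention), $\Res_\eps(\lambda)f$ is the minimizer of
\begin{equation*}
\Phi_\eps[u] := \QQ^1_\eps[u] - \lambda\|u\|^2_{L^2(M\setminus L, g)} - 2\,\mathrm{Re}\,\langle f,u\rangle_{L^2(M\setminus L, g)};
\end{equation*}
Part 2 (applied with parameter $-\lambda$) plus continuity of the linear perturbation give $\Phi_\eps \overset{\Gamma}{\rightharpoonup} \Phi_0$ strongly in $L^2$, and equi-coercivity follows from $\QQ^1_\eps \ge 0$ together with the strictly positive quadratic term. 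The fundamental theorem of $\Gamma$-convergence then delivers strong convergence of the minimizers, i.e., $\Res_\eps(\lambda)f \to \Res_0(\lambda)f$ for each $f$. The main obstacle is reconciling this with the ``$\lambda>0$'' in the corollary statement, since in that range the naive minimization loses strict convexity; I would bridge the gap by first establishing strong resolvent convergence at a coercive value $\lambda_0<0$ as above and then transferring to any other resolvent-set value through the second resolvent identity together with the compactness of $\Res_\eps(\lambda_0)$ on the closed manifold, which upgrades weak convergence to strong in the standard way.
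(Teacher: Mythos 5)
Parts 1 and 2 of your argument are sound and closely track what the paper accomplishes by citing \cite[Proposition 6.25]{dalmaso}: your observation that the dominated-convergence step in Lemma \ref{lemma-q2} is sign-independent handles part 1, and your Cauchy--Schwarz estimate establishes continuous convergence of $\QQ^2_{\lambda,\eps}$, which is precisely the hypothesis under which a $\Gamma$-converging sequence is stable under additive perturbation. So for parts 1 and 2 you are reproducing the paper's reasoning with the details unpacked.

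Part 3 has two gaps. First, the fundamental theorem of $\Gamma$-convergence needs equi-coercivity in the sense that sublevel sets of $\Phi_\eps$ lie in a common \emph{compact} set for the convergence topology. You are working in the strong topology of $L^2(M\setminus L,g)$, but your coercivity bound only yields $L^2$-\emph{bounded} sublevel sets, which are not strongly precompact, so the theorem does not directly produce the claimed strong convergence of minimizers. One has to extract a weak limit, show convergence of the minimal values, deduce $\|u_\eps\|_{L^2}\to\|u_0\|_{L^2}$, and then upgrade weak to strong; the paper sidesteps all this by citing the ready-made quadratic-form result \cite[Theorem 13.6]{dalmaso}, which is exactly this package. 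Second, your proposed bridge to the non-coercive regime cannot rely on compactness of $\Res_\eps(\lambda_0)$: for $\eps>0$ the form $\QQ_\eps$ integrates only over $M\setminus T(\eps)$, so $\HH_\eps$ vanishes on $L^2(T(\eps),g)$ and $\Res_\eps(\lambda_0)$ acts there as $-\lambda_0^{-1}$ times the identity on an infinite-dimensional subspace, hence is not compact. Fortunately this bridge is not what part (3) asks for: the ``$\lambda>0$'' there is the coercive regime in the convention $(\lambda I+\HH_\eps)^{-1}$ used by \cite[Theorem 13.6]{dalmaso}, despite the notational clash with the paper's own definition $\Res_\eps(\lambda)=(\HH_\eps-\lambda)^{-1}$; the transfer to general resolvent-set values (including $-k^2$) is done separately, via Kato's perturbation theory, in Lemmas \ref{spec-avoidance} and \ref{resolvent-convergence}.
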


\begin{proof}
 First consider the case when $\lambda\le0$. From Lemmas \ref{lemma-q2} and \ref{lemma-q1}, $\QQ^1_\eps\overset{\Gamma}{\rightharpoonup} \QQ^1 $ and $\lambda\QQ^2_\eps\overset{\Gamma}{\rightharpoonup} \lambda\QQ^2 $.
 Further, we have the pointwise convergence
$ \QQ^1_\eps[u]\to \QQ^1[u]$ and $\QQ^2_{\lambda,\eps}[u]\to \QQ^2_\lambda[u]$
for each $u\in L^2(M\setminus L ,g)$. Then,  by \cite[Proposition 6.25]{dalmaso}, (1) and (2) hold in this case. 

Next consider $\lambda>0$. By Lemma \ref{form-properties}, the quadratic forms $\QQ_\eps^1+ \QQ_{\lambda,\eps}^2$ and $\QQ^1 + \QQ_\lambda^2$ are positive. In this setting we may apply \cite[Theorem 13.6]{dalmaso}  to achieve claims (1)-(3).

\end{proof}

%
%

\subsection{Strong resolvent convergence}\label{resolvent-section}

%
%
%

In the previous section, we concluded with Corallary \ref{dal-maso-cor} which gave strong convergence of the resolvent operators $\mathcal{R}_\eps(\lambda)$ to $\mathcal{R}_0(\lambda)$ as $\eps\to 0$ in the case when $\lambda>0$. In this section, we prove that as $\eps\to0$, the resolvents $\Res_{\eps}(\lambda)$ converge strongly in $L^{2}(M\setminus L ,g)$ to $\Res_0(\lambda)$, for an appropriate set of complex values $\lambda\in \CC$. In particular, we will show that the values of $\lambda$ for which we have strong convergence include those of the form $-k^{2}<0$ for some $k\in\RR$, which correspond to sinusoidal wave solutions of the Helmholtz equations $-\HH_{\eps}u+k^{2}u = f$. 

First, we show that a compact set $K\subset \CC$ which avoids the the spectrum of $\HH_0$ also avoids the spectrum of $\HH_{\eps}$ for sufficiently small $\eps>0$:\medskip

\begin{lemma}\label{spec-avoidance}
Let $K\subset \CC$ be compact and such that $K\cap\text{spec}(\HH_0)=\emptyset$. Then, there exists an $\eps_{K}>0$ such that for $\eps<\eps_{K}$ 
\[
K\cap \text{spec}(\HH_{\eps}) = \emptyset.
\]
\end{lemma}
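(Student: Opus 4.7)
The plan is to argue by contradiction using Weyl's criterion together with the structural fact that the form $\QQ_\eps$ sees nothing on $T(\eps)\setminus L$. Since the operators $\HH_\eps$ and $\HH_0$ of Theorem \ref{freidrichs-thm} are self-adjoint, their spectra lie in $\RR$, so it suffices to work with $K \cap \RR$. Supposing the lemma fails, I extract sequences $\eps_n \to 0$ and $\lambda_n \in K \cap \text{spec}(\HH_{\eps_n})$ with $\lambda_n \to \lambda_* \in K \subset \text{res}(\HH_0)$. By Weyl's criterion I pick $u_n \in \text{dom}(\HH_{\eps_n})$ with $\|u_n\|_{L^2(M\setminus L, g)} = 1$ and $f_n := (\HH_{\eps_n} - \lambda_n) u_n \to 0$ in $L^2(M\setminus L, g)$.

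For any $v \in H^1(M\setminus L, g)$ supported in $T(\eps_n)\setminus L$, the form $\QQ_{\eps_n}[u_n, v]$ integrates only over $M\setminus T(\eps_n)$ and therefore vanishes, forcing $\HH_{\eps_n} u_n = 0$ almost everywhere on $T(\eps_n)\setminus L$. The equation there reduces to $-\lambda_n u_n = f_n$. Because $0 \in \text{spec}(\HH_0)$ (constants satisfy $\Delta_g 1 = 0$ on the closed manifold $M$), $0 \notin K$, hence $|\lambda_n|$ is bounded away from zero, and $\|u_n\|_{L^2(T(\eps_n)\setminus L, g)} \le \|f_n\|_{L^2}/|\lambda_n| \to 0$. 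Consequently $\|u_n\|_{L^2(M\setminus T(\eps_n), g)} \to 1$. Pairing the equation with $u_n$ yields $\QQ_{\eps_n}[u_n, u_n] = \lambda_n + \langle f_n, u_n\rangle$ bounded, so $\|du_n\|_{L^2(M\setminus T(\eps_n), g)}$ is uniformly bounded as well.

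Using the normal coordinates adapted to $L$ from Section \ref{coord-est}, I reflect $u_n|_{M\setminus T(\eps_n)}$ across $\partial T(\eps_n)$ to produce $\tilde u_n \in H^1(M, g)$ with $\tilde u_n|_{M\setminus T(\eps_n)} = u_n|_{M\setminus T(\eps_n)}$ and $\|\tilde u_n\|_{H^1(M, g)}$ uniformly bounded. Rellich--Kondrachov extracts a subsequence with $\tilde u_n \rightharpoonup u_*$ weakly in $H^1(M, g)$ and strongly in $L^2(M, g)$. Testing the weak formulation of $(\HH_{\eps_n} - \lambda_n)u_n = f_n$ against any $v \in C^\infty_0(M\setminus L)$ whose support lies in $M \setminus T(\delta_0)$ for some fixed $\delta_0 > 0$, and using that $dv \equiv 0$ on $T(\delta_0)$, I pass to the limit to obtain
\[
\int_M g(du_*, dv)\, d\mu_g = \lambda_* \int_M u_*\, v\, d\mu_g.
\]
By Lemma \ref{restriction-iso} such test functions are dense in $H^1(M, g) = H^1_0(M\setminus L, g)$, so the identity extends to all of $H^1(M, g)$, identifying $u_*$ as an eigenfunction of $\mathcal{A}_0$ with eigenvalue $\lambda_*$.

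The key obstacle will be to show $u_* \neq 0$, i.e.\ to rule out concentration of $u_n$ into the shrinking tubes around $L$. Here the codimension-two nature of $L$ is essential: $\mu_g(T(\delta)) \le C \delta^2$ for small $\delta$, and the Sobolev embedding $H^1(M, g) \hookrightarrow L^6(M, g)$ applied to $\tilde u_n$ together with H\"older's inequality gives $\|u_n\|_{L^2(T(\delta))}^2 \le C \delta^{4/3}$ uniformly in $n$. The triangle inequality $\|u_n\|_{L^2(M\setminus T(\delta))} \ge \|u_n\|_{L^2(M\setminus T(\eps_n))} - \|u_n\|_{L^2(T(\delta)\setminus T(\eps_n))}$ combined with strong $L^2$-convergence $\tilde u_n \to u_*$ then forces $\|u_*\|_{L^2(M, g)} = 1$ after sending $n \to \infty$ and $\delta \to 0$. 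This contradicts $\lambda_* \in \text{res}(\HH_0)$ and completes the proof.
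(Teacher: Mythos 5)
Your approach is genuinely different from the paper's. The paper dispatches this lemma in a few lines by appealing to Kato's stability theorems for generalized resolvent convergence (Chapter IV, \S 2--3 of Kato), together with the resolvent convergence at $\lambda_0 = 1$ already supplied by Corollary \ref{dal-maso-cor}. You instead run a Weyl-sequence contradiction argument and rule out concentration near $L$ by hand. The skeleton of your argument is sound and would in fact make explicit the codimension-two mechanism that the paper's abstract $\Gamma$-convergence route hides, so the comparison is genuinely instructive.

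However, there is a real gap in the extension step. You claim that reflecting $u_n$ evenly across $\partial T(\eps_n)$ produces $\tilde u_n \in H^1(M,g)$ with a uniform $H^1(M)$ bound. In the normal coordinates $(r,\theta,s)$ the metric near $L$ is asymptotically $dr^2 + r^2\,d\theta^2 + ds^2$, so $g^{\theta\theta}\sim r^{-2}$ and $d\mu_g\sim r\,dr\,d\theta\,ds$. The $\theta$-part of the Dirichlet energy of the even reflection $\tilde u_n(r,\theta,s)=u_n(2\eps_n - r,\theta,s)$ over $\{0<r<\eps_n\}$ is, after substituting $r'=2\eps_n-r$,
\[
\int_{\eps_n}^{2\eps_n}\int_{\SSS^1\times\SSS^1}\frac{|\partial_\theta u_n(r',\theta,s)|^2}{2\eps_n - r'}\,dr'\,d\theta\,ds ,
\]
and the known bound only controls $\int |\partial_\theta u_n|^2/r'\,dr'\,d\theta\,ds$. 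Since $1/(2\eps_n-r')$ is not dominated by $1/r'$ (it is unbounded as $r'\to 2\eps_n$, precisely where $1/r'$ stays tame), the reflected energy need not be finite, let alone uniformly bounded. So the chain ``reflect $\Rightarrow$ uniform $H^1(M)$ bound $\Rightarrow$ $L^6$ $\Rightarrow$ $\|u_n\|_{L^2(T(\delta))}\lesssim\delta^{4/3}$'' does not go through as written. The gap is fixable without the $L^6$ detour: for instance, using only the radial part $\int_{\eps_n}^{R}|\partial_r u_n|^2 r\,dr\,d\theta\,ds\le C$ and the fundamental theorem of calculus in $r$ plus Cauchy--Schwarz with weight $1/r$, one obtains directly $\|u_n\|^2_{L^2(T(\delta)\setminus T(\eps_n))}\lesssim \delta^2\log(1/\delta)$ uniformly in $n$, which is stronger than the estimate you were aiming for and bypasses any extension to $T(\eps_n)$. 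Alternatively, the conformal inversion $r\mapsto\eps_n^2/r$ (rather than $r\mapsto 2\eps_n - r$) preserves the two-dimensional Dirichlet energy of the $(r,\theta)$ slices and does yield a bounded $H^1$ extension, but this requires an explicit calculation that your proposal does not supply.
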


\begin{proof}


By \cite[Chapter IV, Section 3.1, Theorem 3.1]{kato-PLO} and \cite[Chapter IV, Section 2.6, Theorem 2.25]{kato-PLO} is is enough to show that there exists some $\lambda_0\in \text{res}(\HH_0)$ and an $\eps_0>0$ such that for $\eps<\eps_0$,
\begin{align}\label{res-conv1}
    \| R_\eps(\lambda_0) - R_0(\lambda_0)\|_{L^2(M\setminus L,g) \to L^2(M\setminus L,g)} \to 0 \ \ \text{as } \eps\to 0.
\end{align}

 As $\HH_\eps$ are positive, self adjoint operators, if $\lambda_0>0$, then $\lambda_0 \in \text{res}(\HH_\eps)$ for all $\eps>0$. From Corollary \ref{dal-maso-cor}, we achieve \eqref{res-conv1} with $\lambda_0 =1$.

\end{proof}

Next, we introduce notation for subsets of $\CC$ for which the family of resolvents $\Res_{\eps}(\lambda)$ exhibit strong convergence or boundedness as $\eps\to 0$.

\begin{defn}\cite[Chapter VIII, Section 1.1]{kato-PLO}
The \emph{region of boundedness} for the family $\left\{\Res_{\eps}(\lambda)\,:\, \eps>0\right\}$ is the subset of all $\lambda\in\CC$ with the property that there exists an $\eps_0>0$ such that the family $\left\{\|\Res_{\eps}(\lambda)\|_{L^{2}}\,:\, \eps_0>\eps>0\right\}$ is bounded. We denote this set by $D_{b}$.

The set of values $\lambda\in \CC$ for which the strong convergence limit $\lim_{\eps\to0}\Res_{\eps}(\lambda)$ exists in $L^{2}(M\setminus L ,g)$ is denoted by $D_{s}$, and is called the \emph{region of strong convergence} for the family $\left\{\Res_{\eps}(\lambda)\,:\, \eps>0\right\}$.

Finally, let $D_{u}$ denote the set of $\lambda\in \CC$ for which the family of resolvents $\left\{\Res_{\eps}(\lambda)\,:\, \eps>0\right\}$ convergences in norm to some limit in $L^{2}(M\setminus L ,g)$. We call $D_{u}$ the \emph{region of convergence in norm}. Observe that $D_{u}\subset D_{s}\subset D_{b}$.
\end{defn}

With these definitions in hand, we prove:

\begin{lemma}\label{resolvent-convergence}
Let $\lambda\in \text{res}(\HH_0)$. For $f\in L^2(M\setminus L ,g)$, we have as $\eps \to 0$,
\[
\Res_\eps(\lambda)f \to \Res_0(\lambda)f \]
in the norm topology of $L^{2}(M\setminus L ,g)$. Furthermore, if $K\subset \CC$ is compact and satisfies $K\cap\text{spec}(\HH_0)=\emptyset$, the convergence is uniform for $\lambda \in K$.
\end{lemma}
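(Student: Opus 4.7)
The plan is to propagate the single-point strong convergence supplied by Corollary \ref{dal-maso-cor}(3) to every $\lambda\in\text{res}(\HH_0)$, and then to upgrade pointwise convergence to uniform convergence on compact subsets. The two key ingredients are the first resolvent identity and a uniform operator-norm bound for the family $\{\Res_\eps(\lambda)\}_\eps$ on compact subsets of $\text{res}(\HH_0)$.

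First I would fix a compact set $K\subset\text{res}(\HH_0)$ and argue that the family $\{\Res_\eps(\lambda):\lambda\in K,\,\eps<\eps_K\}$ is uniformly bounded in the $L^2(M\setminus L,g)$-operator norm. Lemma \ref{spec-avoidance} already yields $\eps_K>0$ with $K\subset\text{res}(\HH_\eps)$ for $\eps<\eps_K$; combining self-adjointness of each $\HH_\eps$ (so that $\|\Res_\eps(\lambda)\|=\text{dist}(\lambda,\text{spec}(\HH_\eps))^{-1}$ for real $\lambda$ off the spectrum and $\|\Res_\eps(\lambda)\|\le|\text{Im}\,\lambda|^{-1}$ otherwise) with the upper semicontinuity of the spectrum (Kato, Ch.~IV, Thm.~3.1, which is the ingredient already used behind Lemma \ref{spec-avoidance}), I would extract a uniform positive lower bound on $\text{dist}(\lambda,\text{spec}(\HH_\eps))$ for $\lambda\in K$ and $\eps<\eps_K$, and hence the desired uniform norm bound $M_K<\infty$.

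Next I would invoke the first resolvent identity $\Res_\eps(\lambda)-\Res_\eps(\lambda_0)=(\lambda-\lambda_0)\Res_\eps(\lambda_0)\Res_\eps(\lambda)$, equivalently the Neumann series expansion
\[
\Res_\eps(\lambda)=\sum_{n=0}^{\infty}(\lambda-\lambda_0)^{n}\,\Res_\eps(\lambda_0)^{n+1},
\]
which is absolutely convergent in operator norm, uniformly in $\eps$, on any disk $|\lambda-\lambda_0|<r<1/M_{\{\lambda_0\}}$, where $\lambda_0$ is the reference point from Corollary \ref{dal-maso-cor}(3). Strong convergence $\Res_\eps(\lambda_0)\to\Res_0(\lambda_0)$ transfers to strong convergence of every power $\Res_\eps(\lambda_0)^{n+1}f\to\Res_0(\lambda_0)^{n+1}f$ for $f\in L^2(M\setminus L,g)$, and the geometric majorant furnished by the uniform norm bound lets me pass the limit $\eps\to0$ through the sum. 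This yields strong convergence $\Res_\eps(\lambda)f\to\Res_0(\lambda)f$ uniformly in $\lambda$ on any slightly smaller closed disk about $\lambda_0$.

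Finally I would globalize by connectedness. Since $\HH_0$ is a positive self-adjoint elliptic operator on the closed manifold $M$, its spectrum is a discrete subset of $[0,\infty)$, and hence $\text{res}(\HH_0)=\CC\setminus\text{spec}(\HH_0)$ is open and connected. An arbitrary $\lambda\in\text{res}(\HH_0)$ may be joined to $\lambda_0$ by a compact arc in $\text{res}(\HH_0)$, which can be covered by finitely many disks on which the previous step applies, with each successive reference point chosen in the overlap of consecutive disks where strong convergence has already been established. Covering a given compact $K\subset\text{res}(\HH_0)$ by finitely many such disks then delivers uniform strong convergence on $K$. The principal obstacle is the first step: the uniform norm bound $M_K$ requires a norm-resolvent (rather than merely strong) form of convergence at $\lambda_0$, which is already implicit in the proof of Lemma \ref{spec-avoidance} via Kato's spectral continuity theorems, but must be handled carefully.
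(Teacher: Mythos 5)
Your argument is correct, and it reaches the same conclusion by a genuinely different route than the paper. The paper introduces the regions $D_b$ (of uniform boundedness) and $D_s$ (of strong convergence) for the family $\{\Res_\eps(\lambda)\}_{\eps>0}$, shows $D_b\supset\CC\setminus\mathrm{spec}(\HH_0)$ exactly as you do in your first step (via Lemma~\ref{spec-avoidance} and the self-adjoint resolvent bound $\|\Res_\eps(\lambda)\|\le \mathrm{dist}(\lambda,\mathrm{spec}(\HH_\eps))^{-1}$), and then simply cites Kato's abstract theorem (\cite[Ch.~VIII, \S1.1, Thm.~1.2]{kato-PLO}) that $D_s$ is relatively open and closed in $D_b$ and that the strong convergence is automatically uniform on compacts in $D_b$; since $D_s\neq\emptyset$ and $D_b$ is connected, the result follows. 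Your Neumann-series/first-resolvent-identity argument, together with the chain-of-disks globalization, is precisely a hands-on reproof of that Kato theorem. What Kato's citation buys is brevity; what your approach buys is self-containedness and makes explicit how the uniform norm bound controls the radius of the Neumann disks. One small correction to your closing remark: the uniform norm bound $M_K$ does \emph{not} require norm-resolvent convergence at $\lambda_0$. It follows purely from Lemma~\ref{spec-avoidance} (applied to $\overline{K_\delta}$) and the equality $\|\Res_\eps(\lambda)\|=\mathrm{dist}(\lambda,\mathrm{spec}(\HH_\eps))^{-1}$ for self-adjoint $\HH_\eps$. Norm-resolvent convergence at $\lambda_0=1$ is used only inside the proof of Lemma~\ref{spec-avoidance} itself (via Kato Ch.~IV, Thm.~3.1), which the paper has already established, so there is no gap to fill there.
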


\begin{proof}

The first part of the claim is to show that $D_s = \CC\setminus \text{spec}(\HH)$. We prove it by using an open/closed argument which gives $D_s=D_b = \CC\setminus \text{spec}(\HH)$.

To begin, observe that $D_{b}$ is connected since $\HH_{0}$ is self-adjoint and thus the spectrum of $\HH_0$ is both discrete and countable \cite[Chapter 6, Theorem 1.9]{edmunds-evans}. From Corollary \ref{dal-maso-cor} we know the sets $D_s$ and $D_b$ are nonempty since $\RR_{+}\subset D_{s}\subset D_b$.

Now, let $K\subset \CC$ be compact and such that $K\cap\text{spec}(\HH_0)=\emptyset$. From Lemma \ref{spec-avoidance} there is an $\eps_{K}$ and $ \delta>0$ such that the set
\[
K_{\delta} = \{ z\in \CC\,:\, \text{dist}(z,K)<\delta\}
\]
satisfies $K_{\delta}\cap \text{spec}(\HH_{\eps})= \emptyset$ for all $\eps<\eps_{K}$.  

 Given that the operators $\HH_{\eps}:L^{2}(M\setminus L ,g)\to L^{2}(M\setminus L ,g)$, $\eps\ge0$, are self-adjoint, by \cite[Chapter V, Section 3.5]{kato-PLO}, for all $\lambda\in K$ 
\begin{align}
\|\Res_{\eps}(\lambda)\|_{L^{2}\to L^{2}} \le [\text{dist}(\lambda, \text{spec}(\HH_{\eps})]^{-1} < \delta^{-1}.
\end{align}
Thus we deduce that $K\subset D_{b}$. Since $K\subset \CC\setminus \text{spec}(\HH_0)$ was arbitrary and $\CC\setminus \text{spec}(\HH_0)$ is connected, we have $D_{b}=\CC\setminus \text{spec}(\HH_0)$.

Utilizing \cite[Chapter VIII, Section 1.1, Theorem 1.2]{kato-PLO}, we have that $D_{s}$ is both relatively open and relatively closed in $D_{b}$. Thus $D_s=D_b=\CC\setminus \text{spec}(\HH_0)$. 

The result\cite[Chapter VIII, Section 1.1, Theorem 1.2]{kato-PLO} further states that for $f \in L^{2}(M\setminus L,g)$, $\Res_\eps(\lambda)\tf \to \Res_0(\lambda)f$ uniformly on all compact subsets of $D_b$, which completes our proof.

\end{proof}

%
%
%


\subsection{Proof of Theorem \ref{main-theorem}}\label{the-main-proof}



 Now we are ready to prove our main result, Theorem \ref{main-theorem}. As shown in Section \ref{fried-ext}, Theorem \ref{main-theorem} follows from Theorem \ref{main-theorem-modified}. We provide the proof of Theorem \ref{main-theorem-modified} below.
%
%
%
%

\begin{proof}[Proof of Theorem \ref{main-theorem-modified}]

Let $f\in L^{2}(M\setminus L,g)$ be supported in $V$ and
let $u_1\in H^{1}(M\setminus L,g)$ be a distribution which is a finite energy solution of 
\begin{align*}
\Delta_gu_1+k^2u_1=f \ \ \text{on } M\setminus L,
\end{align*}
in sense of Definition \ref{helmholtz-finite}. Then $u_1=-\Res_0(-k^2)f$.

Define $\tf =f\circ  \Psi^{-1}$ and $\tu =u_1\circ  \Psi^{-1}$
on $\tM$. By Lemma \ref{Psi-unitary}, $\tf \in L^{2}(\Psi(V),\tg)$. Moreover,
by Lemma \ref{helmholtzM-tM-part2},  $\tu \in H^{1}(\tM,\tg)$ is a finite energy solution of the Helmholtz equation on $\tM$ with wavenumber $k$ and source $\tf$. 

Denote by $u_{\eps} \in H^{1}(M\setminus T(\eps), g)$ the solution to \eqref{helmholtz-eps} with wavenumber $k$ and source $f$. Then, $u_\eps=-\Res_\eps(-k^2)f$ and
by Lemma \ref{resolvent-convergence}, 
\begin{align}\label{u-convergence-l2}
\lim_{\eps\to 0}u_{\eps} = u_1
\end{align}
with respect to the norm topology of $L^{2}(M\setminus L,g)$. This implies that
\[ 
\lim_{\eps\to 0}u_{\eps}|_{V} = u_1|_{V}
\]
with respect to the norm topology of $L^{2}(V,g)$. By definition we have $u_\eps|_{V}= \Lambda_{V,\eps}f$ and $u_1|_{V}= \Lambda_{V}f$; applying Lemma \ref{ss-map-coord} we thus obtain the first part of the claim.  

For the moreover part of the claim, suppose that $f\in C^{m,\alpha}_0(V,g)$ for some $m\ge0$ and $\alpha\in (0,1)$. Let $W\subset M\setminus L$ be a relatively compact open set having a smooth boundary such that $\overline V\subset \overline W$ and $\overline W \subset M\setminus L$; here the over-line denotes set closure in $M$. Then, we have for $w= u$ or $u_\eps$ that $w\in C^{m+2,\alpha}(\overline{V})$ and the standard elliptic estimate ( see \cite[Theorem A.2.3]{Jost} or \cite[Ch. 6 Ex 6.1]{GilbargTrudinger})
\begin{align}\label{estimate1}
\|w\|_{C^{k,\alpha}(\overline V)} \le C(\|f\|_{C^{k,\alpha}(\overline  W)} + \|w\|_{C^{0,\alpha}(\overline W)}),
\end{align}
holds for some $C= C(M\setminus L, \overline V, \overline W, g)>0$.  

There also exists a $C= C(M\setminus L, W, g)>0$ such that
\begin{align}\label{estimate2}
\| w\|_{H^{2}(W,g)}\le C\|w\|_{L^2(M\setminus L,g)}.
\end{align}
By applying Morrey's inequality for $\text{dim}(W)=3$ and $H^{2}(W,g)$, we have
\begin{align}\label{estimate3}
 \| w\|_{C^{0,1/2}(\overline W)}\le C\| w\|_{H^2(W,g)}   
\end{align}
for some $C=C(\overline W,g)>0$.

Since we assumed that $m+\alpha>1/2$, putting together estimates \eqref{estimate1} - \eqref{estimate3} gives us the inequality
\[
\| u_\eps - u\|_{C^{m,\alpha}(\overline V,g)}\le C\| u_\eps - u\|_{L^2(W,g)},
\]
for $C= C(M\setminus L, \overline V, \overline W, m,\alpha, g)>0$.

Using, \eqref{u-convergence-l2} with $M\setminus L$ replaced by $W$, we obtain the desired convergence in $C^{m,\alpha}(\overline{V},g)$.

\end{proof}

%
%

%
%

\section{Proof of Proposition \ref{smooth-embedding}}
\label{const-3manifold}

In this section, we prove a version of Proposition \ref{smooth-embedding} which also satisfies the necessary additional properties in \eqref{explicit diffeo}. The proof of the existence of the embedding is based on a characterization theorem for closed $3$-manifolds due to Lickorish \cite{Lickorish} and Wallace \cite{Wallace}: \emph{Each closed and oriented $3$-manifold is obtained by a surgery along a link in $\SSS^3$}. In particular, each closed and oriented $3$-manifold admits an embedding into $\RR^3$ after removal of a suitable link. 


We do not discuss the proof of this topological statement in detail, but introduce the necessary terminology related to a precise statement and recall a smoothing result in $3$-dimensions used to deduce Proposition \ref{smooth-embedding} from the Lickorish--Wallace theorem.

\newcommand{\interior}{\mathrm{int}}

Let $M$ be a $3$-manifold. We call the product space $\bar B^2 \times \SSS^1$ and the circle $\{0\} \times \SSS^1$ the \emph{solid $3$-torus} and its \emph{core curve}, respectively. Given an embedding $\phi \colon \bar B^2 \times \SSS^1 \to M$, we call the images $\phi(\bar B^2\times \SSS^1)$ and $\phi(\{0\} \times \SSS^1)$ an \emph{embedded solid $3$-torus} and its core curve, respectively.

Heuristically, a surgery over a circle $S$ on a $3$-manifold $M$ is an operation which replaces an interior of an embedded solid $3$-torus $T$ in $M$ by the interior of another solid $3$-torus $T'$ which is not a priori contained in $M$. Formally, the replacement is obtained by gluing a copy of $\bar B^2 \times \SSS^1$ to $M\setminus \interior T$. More precisely, we fix first a neighbourhood for $S$ by choosing an embedding $\phi \colon \bar B^2 \times \SSS^1 \to M$ for which $S=\phi(\{0\}\times \SSS^1)$. To formalize the gluing, let $h \colon (\partial \bar B^2)\times \SSS^1 \to \phi((\partial B^2)\times \SSS^1)$ be a homeomorphism. We consider then a disjoint union
\[
\left(M \setminus \phi(B^2\times \SSS^1)\right) \bigsqcup \bar B^2\times \SSS^1
\]
and form the quotient space
\[
\widehat M = \left( \left(M \setminus \phi(B^2\times \SSS^1)\right) \bigsqcup \bar B^2\times \SSS^1\right)\Big/{\sim_h},
\]
where the equivalence relation $\sim_h$ is the equivalence relation generated by the condition that $y\sim_h (x,e^{i \theta})$ if $y = h(x,e^{i\theta})$ for $y\in M$ and $(x,e^{i \theta}) \in (\partial B^2)\times \SSS^1$. In what follows, we use the common notation
\[
\widehat M =  \left(M \setminus \phi(B^2\times \SSS^1)\right) {\bigcup}_h \bar B^2\times \SSS^1
\]
for the space $\widehat M$. Please see Figure~\ref{fig:gluing2d} for a schematic of the gluing map $h$.

\begin{figure}[ht]
    \centering
    \includegraphics[scale=0.4]{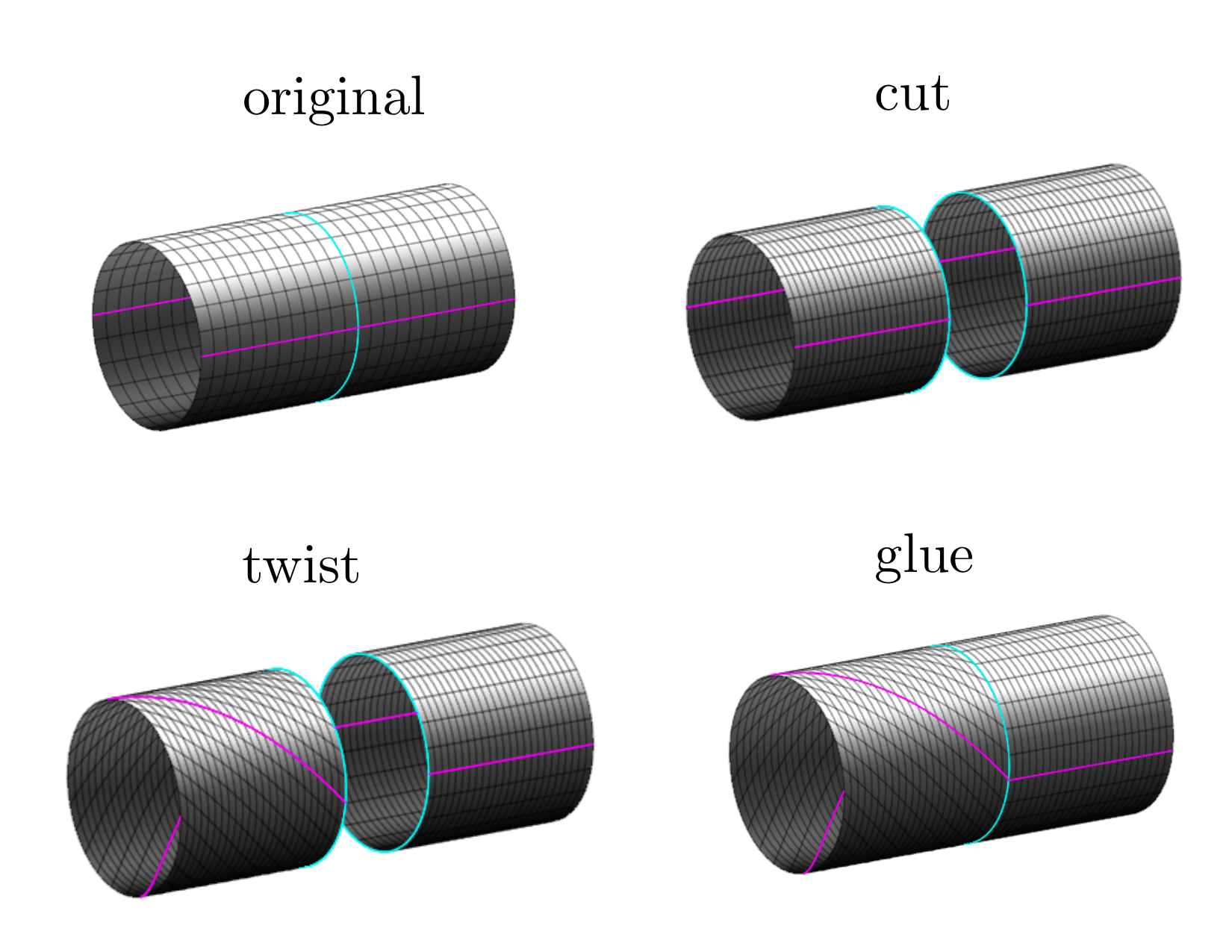}
    \caption{This figure schematically depicts the gluing homeomorphism $h$.}
    \label{fig:gluing2d}
\end{figure}

A surgery over a link $L$ in $M$ is obtained similarly as follows. The additional condition we impose is that the solid $3$-tori neighbourhoods of the circles in the link $L$ do not meet. For brevity, we introduce the following notation. Let $L = S_1\cup \cdots \cup S_J$ be a link with $J$ circles in $M$, and let $T_1,\ldots, T_J$ be mutually disjoint solid $3$-tori having circles $S_1,\ldots, S_J$ as their core curves, respectively. Let also $h \colon (\partial \bar B^2 \times \SSS^1)\times \{1,\ldots, J\} \to (\partial T_1\cup \cdots \cup \partial T_J)$ be a homeomorphism. Then 
\[
\widehat M = \left( \left( M\setminus \interior( T_1 \cup \cdots \cup T_J\right) \bigsqcup \bar B^2\times \SSS^1 \times \{1,\ldots, J\}\right)\Big/{\sim_h},
\]
is a \emph{space obtained from $M$ as a surgery over the link $L$}. Here the equivalence relation with respect to $h$ is analogous to the case of one circle, that is, the equivalence relation $\sim_h$ is the equivalence relation $\sim_h$ generated by the condition $y\sim_h (x,t,j)$, where $y=h(x,e^{i\theta},j)$ for $y\in M$, $(x,e^{i\theta})\in \bar B^2\times \SSS^1$ and $j=1,\ldots, J$. Again, we denote
\[
\widehat M = \left(M\setminus \interior( T_1 \cup \cdots \cup T_J)\right) {\bigcup}_h \bar B^2\times \SSS^1 \times \{1,\ldots, J\}.
\]
 
In what follows, let 
\begin{align*}
\pi_h \colon & \left( M\setminus \interior( T_1 \cup \cdots \cup T_J\right) \bigsqcup \bar B^2\times \SSS^1 \times \{1,\ldots, J\} \\
&\to ( M\setminus \interior( T_1 \cup \cdots \cup T_J)) {\bigcup}_h \bar B^2\times \SSS^1 \times \{1,\ldots, J\}
\end{align*}
be the canonical projection, i.e.\ the quotient map $x\mapsto [x]$ for the equivalence relation $\sim_h$.

For each $j=1,\ldots, J$, we also denote $\widehat T_j = \pi_h(\bar B^2\times \SSS^1\times \{j\})$ and $\widehat S_j = \pi_h(\{0\}\times \SSS^1\times \{j\})$. Note that each $\widehat T_j$ is a solid $3$-torus and each $\widehat S_j$ the core curve of $\widehat T_j$. In particular, 
\begin{equation}
\label{eq:widehat-L}
\widehat L = \widehat S_1\cup \cdots \cup \widehat S_J
\end{equation}
is a link in $\widehat M$.

Having this terminology at our disposal, we may now give a precise statement of the Lickorish--Wallace theorem. 

\begin{theorem}[Lickorish \cite{Lickorish}, Wallace \cite{Wallace}]
\label{thm:LW-precise}
Let $M$ be a closed, connected, and orientable $3$-manifold. Then there exists a link $L = S_1 \cup \cdots \cup S_J \subset \SSS^3$, mutually disjoint solid $3$-tori $T_1,\ldots, T_J$ in $\SSS^3$ with core curves $S_1,\ldots, S_J$, respectively, and a homeomorphism 
\[
h \colon (\partial \bar B^2\times \SSS^1) \times \{1,\ldots, J\} \to (\partial T_1 \cup \cdots \cup \partial T_J)
\]
for which 
\begin{equation}
\label{eq:LW-precise}
M \approx \widehat M = \left(\SSS^3\setminus \interior(T_1\cup \cdots \cup T_J)\right) {\bigcup}_h \bar B^2\times \SSS^1\times \{1,\ldots,J\}.
\end{equation}
\end{theorem}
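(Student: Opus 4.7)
The plan is to prove Theorem \ref{thm:LW-precise} via Heegaard splittings, following the classical strategy of Lickorish. First, I would invoke the fact that every closed, connected, orientable $3$-manifold $M$ admits a Heegaard splitting: there is an integer $g\geq 0$, two genus-$g$ handlebodies $H_1, H_2$, and an orientation-preserving homeomorphism $\phi \colon \partial H_2 \to \partial H_1$ such that $M \approx H_1 \cup_\phi H_2$. This is obtained from a smooth triangulation of $M$ by taking $H_1$ to be a regular neighbourhood of the $1$-skeleton and $H_2$ the closure of its complement. Importantly, $\SSS^3$ itself has a standard genus-$g$ Heegaard splitting $\SSS^3 \approx H_1 \cup_{\phi_0} H_2$ for every $g$, so it suffices to realize the difference $\phi \circ \phi_0^{-1}$ of gluing maps by a sequence of surgeries on a link in $\SSS^3$.

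The next step is to reduce this difference to the mapping class group $\mathrm{MCG}(\Sigma_g)$ of the closed orientable surface of genus $g$, which acts on the set of Heegaard gluings. Here I would apply the theorem of Lickorish that $\mathrm{MCG}(\Sigma_g)$ is generated by Dehn twists along a finite, explicit collection of simple closed curves on $\Sigma_g$. This reduces the problem to showing: if $M'$ is obtained from $M$ by modifying a Heegaard gluing by a single Dehn twist along a simple closed curve $\gamma \subset \Sigma_g$, and if $M$ already has a surgery description on $\SSS^3$, then so does $M'$. The key geometric lemma is that performing a Dehn twist along $\gamma$ on the splitting surface has the same effect as doing a $\pm 1$-framed surgery on $\SSS^3$ along a push-off of $\gamma$ into one of the handlebodies; this can be verified by checking the gluing of the neighbourhoods directly, since a $\pm 1$-surgery on an unknotted solid torus precisely realizes a Dehn twist on its boundary.

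Iterating this construction, starting with the standard Heegaard splitting of $\SSS^3$, one obtains $M$ from $\SSS^3$ by finitely many framed surgeries along simple closed curves lying in a small collar of the Heegaard surface. These curves can be perturbed to be pairwise disjoint and embedded in $\SSS^3$, yielding a link $L = S_1 \cup \cdots \cup S_J$ and mutually disjoint solid $3$-tori $T_j$ with core curves $S_j$ whose surgery realizes $M$, giving the gluing homeomorphism $h$ and the homeomorphism in \eqref{eq:LW-precise}.

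The main obstacle in such a proof is Lickorish's theorem on the generation of $\mathrm{MCG}(\Sigma_g)$ by a finite collection of Dehn twists, which requires a nontrivial inductive argument on the genus, together with the careful bookkeeping that lets one conclude that Dehn twists along nonseparating curves suffice. A secondary (but more routine) difficulty lies in verifying the equivalence between a Dehn twist on the Heegaard surface and a framed surgery along a push-off into the handlebody, which requires an explicit model of the twist region and its relation to the framing.
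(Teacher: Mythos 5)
The paper does not prove Theorem \ref{thm:LW-precise}: it is quoted as the classical Lickorish--Wallace theorem with a citation to the original papers, and the text explicitly says ``We do not discuss the proof of this topological statement in detail.'' So there is no paper proof to compare against; the role of the statement in the paper is only to fix a precise formulation (tori, core curves, gluing homeomorphism, quotient) that the subsequent lemmas on gluing diffeomorphisms and controlled derivatives can hang on.

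That said, your sketch is an accurate summary of Lickorish's original argument: Heegaard splitting of $M$, reduction to the mapping class group of the genus-$g$ surface acting on the gluing, Lickorish's finite-generation of $\mathrm{MCG}(\Sigma_g)$ by Dehn twists, and the identification of a Dehn twist on the splitting surface with a $\pm1$-framed surgery along a push-off of the twisting curve into a collar. You correctly flag the two genuinely hard inputs: the twist theorem for $\mathrm{MCG}(\Sigma_g)$ and the twist-equals-surgery lemma. One bookkeeping point worth making explicit, since the theorem as stated asks for \emph{mutually disjoint} solid tori $T_1,\ldots,T_J$: the curves produced by iterating the Dehn-twist-to-surgery reduction live on nested parallel copies of the Heegaard surface (pushed into a collar at successive levels), which is what guarantees pairwise disjointness of the surgery tori; saying merely that they ``can be perturbed to be pairwise disjoint'' understates this, because the disjointness is forced by the collar levels, not by a generic perturbation. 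It is also worth noting that Wallace's proof is via a different route (cobordism and handle decompositions of $4$-manifolds), so if you wanted to present ``the'' proof you would have to pick one; your sketch is Lickorish's, which is the one most commonly given in textbooks.
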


\subsection{Embeddings satisfying \eqref{explicit diffeo}}

Although our primary aim is to obtain a smooth embedding $\widehat M\setminus \widehat L \to \RR^3$, in our applications we need the embedding to have controlled derivative close to the link $\widehat L$ as stated in \eqref{explicit diffeo}. For the definition, we use polar coordinates in the $\bar B^2$ factor of $\bar B^2 \times \SSS^1$. In practice, this means that we will use three coordinates $(r,\theta,s)$, $r \in [0,1]$, $\theta,s \in \RR$, to denote a point in $\bar B^2 \times \SSS^1$ instead of just two $(x,e^{i\psi})$, $x \in \bar B^2$, $e^{i\psi} \in \SSS^1$. As usual, these two coordinate systems are related to each other with the formulas $x = (r\cos{\theta},r\sin{\theta})$ and $\psi = s$.

Now we can give the precise definition for the controlled derivative.
\begin{defn}
Let $\widehat M$ and $M$ be closed $3$-manifolds and $\widehat L \subset \widehat M$ and $L \subset M$ links. We say that the derivatives of a diffeomorphism $F\colon \widehat M \setminus \widehat L \to M \setminus L$ are \textbf{controlled close to the link $\widehat L$}, if there exist mutually disjoint solid $3$-tori $\widehat T_1, \dots, \widehat T_J \subset \widehat M$ and $T_1, \dots, T_J \subset M$ with parametrizations $\widehat \phi: \bar B^2 \times \SSS^1 \times \{1,\dots,J\} \to \widehat T_1 \cup \dots \cup \widehat T_J$ and $\phi: \bar B^2 \times \SSS^1 \times \{1,\dots,J\} \to T_1 \cup \dots \cup T_J$, for which $\widehat L = \widehat \phi(\{0\} \times \SSS^1 \times \{1,\dots,J\})$, $L = \phi(\{0\} \times \SSS^1 \times \{1,\dots,J\})$, $F((\widehat T_1 \cup \dots \cup \widehat T_J) \setminus \widehat L) \subset T_1 \cup \dots \cup T_J$ and the partial derivatives of the composite map
\[
\pi = \phi^{-1} \circ F \circ \widehat \phi|_{(\bar B^2 \setminus \{0\}) \times \SSS^1 \times \{j\}}\colon (\bar B^2 \setminus \{0\}) \times \SSS^1 \to (\bar B^2 \setminus \{0\}) \times \SSS^1
\]
are bounded for each $j = 1,\dots,J$, i.e. there are positive constants $C_{rr}$, $C_{r\theta}$, $C_{rs}$, $C_{\theta r}, \dots$ which satisfy
\[
\left| \frac{\partial \pi_r}{\partial r} \right| \leq C_{rr},
\quad\left| \frac{\partial \pi_r}{\partial \theta} \right| \leq C_{r\theta},
\quad\left| \frac{\partial \pi_r}{\partial s} \right| \leq C_{rs},
\quad\left| \frac{\partial \pi_\theta}{\partial r} \right| \leq C_{\theta r},
\,\,\,\dots
\]
\end{defn}

To be able to construct a diffeomorphism with this property for our needs, we require also our gluing homeomorphisms to be diffeomorphisms. Our first preliminary result is the following lemma, which states that for every gluing homeomorphism there exists a gluing diffeomorphism that yields in the surgery the same $3$-manifold up to a homeomorphism.

\begin{lemma}
\label{lemma:gluing-diffeomorphism}
Let $M$ be a compact, smooth $3$-manifold and $\widehat M$ a $3$-manifold obtained from $M$ by a surgery along mutually disjoint solid tori $T_1,\dots,T_J \subset M$ with gluing homeomophism $h\colon (\partial \bar B^2 \times \SSS^1)\times \{1,\ldots, J\} \to (\partial T_1\cup \cdots \cup \partial T_J)$. Then there exists another gluing homeomorphism $h'$, which is a diffeomorphism and for which the manifold
\[
\widehat M' = (M \setminus \interior (T_1 \cup \cdots T_J)) {\bigcup}_{h'} \bar B^2\times \SSS^1 \times \{1,\ldots, J\}
\]
is homeomorphic to $\widehat M$. Furthermore, $\widehat M'$ has a smooth structure induced by the smooth structures of $M\setminus\interior (T_1 \cup \cdots \cup T_J)$ and $\bar B^2\times \SSS^1 \times \{1,\ldots, J\}$.
\end{lemma}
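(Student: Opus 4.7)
The plan is to reduce to the case of a single solid torus and then invoke a classical smoothing theorem for homeomorphisms of surfaces. Since the solid tori $T_1, \ldots, T_J$ are pairwise disjoint, both the gluing locus and the gluing data decompose as disjoint unions: writing $h_j = h|_{(\partial \bar B^2 \times \SSS^1) \times \{j\}} \colon \partial \bar B^2 \times \SSS^1 \to \partial T_j$, the homeomorphism $h$ is the disjoint union of the $h_j$. Hence it suffices to find, for each $j$, a diffeomorphism $h'_j$ replacing $h_j$ so that the combined map $h' = \sqcup_j h'_j$ yields a surgery manifold $\widehat M'$ that is homeomorphic to $\widehat M$. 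Observe that $\partial T_j$ is a smooth $2$-torus, being the boundary of a smoothly embedded solid torus in the smooth manifold $M$, and $\partial \bar B^2 \times \SSS^1$ is of course smooth as well, so each $h_j$ is a homeomorphism between compact smooth $2$-manifolds.

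The smoothing step uses the classical result of Munkres in dimension $2$: every homeomorphism between smooth closed surfaces can be approximated in the $C^0$-topology by diffeomorphisms, and a sufficiently close approximation is ambient isotopic to the original map. Applying this to each $h_j$ produces a diffeomorphism $h'_j \colon \partial \bar B^2 \times \SSS^1 \to \partial T_j$ together with an ambient isotopy $H_{j,t} \colon \partial T_j \to \partial T_j$, $t \in [0,1]$, joining the identity to the diffeomorphism $h'_j \circ h_j^{-1}$. This is the essential analytic input; everything that follows is a standard assembly.

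It remains to show that $\widehat M \approx \widehat M'$ and that $\widehat M'$ carries a compatible smooth structure. For the homeomorphism, pick a collar neighbourhood $c_j \colon \partial T_j \times [0,1) \hookrightarrow M \setminus \interior T_j$ of $\partial T_j$ (such a collar exists by Brown's collaring theorem) and define a self-homeomorphism of $M \setminus \interior(T_1 \cup \cdots \cup T_J)$ that equals $H_{j,\rho(t)}$ on $c_j(\partial T_j \times \{t\})$, where $\rho\colon [0,1) \to [0,1]$ is a smooth cutoff with $\rho(0) = 1$ and $\rho$ identically $0$ near some $t_0 \in (0,1)$, and the identity outside the collars. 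This self-homeomorphism, together with the identity on $\bar B^2 \times \SSS^1 \times \{1,\ldots, J\}$, descends to a homeomorphism $\widehat M \to \widehat M'$ under the quotient maps $\pi_h$ and $\pi_{h'}$. For the smooth structure, use that both pieces $M \setminus \interior(T_1 \cup \cdots \cup T_J)$ and $\bar B^2 \times \SSS^1 \times \{1,\ldots, J\}$ are smooth manifolds with smooth boundary, choose smooth collars of the boundaries on each side, and use $h'$ to identify the two collars near the gluing locus; the resulting transition functions are smooth because $h'$ is a diffeomorphism. The main obstacle is the smoothing step for the gluing maps: once Munkres' theorem is accepted, the collar-and-isotopy argument for the homeomorphism and the collar-based definition of the smooth structure are routine.
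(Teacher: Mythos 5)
Your proposal is correct and follows essentially the same strategy as the paper: invoke the classical surface-isotopy theorem (Baer/Epstein/Munkres) to replace the gluing homeomorphism by an isotopic diffeomorphism, then use the isotopy to interpolate on a collar-type region so as to produce a homeomorphism between the two surgered manifolds, with the smooth structure on $\widehat M'$ coming for free from the diffeomorphic gluing. The only structural variation is where the interpolation lives --- the paper dampens the isotopy on the annulus $(\bar B^2\setminus B^2(1/2))\times\SSS^1$ inside the attached solid tori, whereas you dampen it on a collar of $\partial T_j$ in $M\setminus\interior T_j$ --- and the one slip to correct is that $h'_j\circ h_j^{-1}$ is only a homeomorphism (not a diffeomorphism, since $h_j^{-1}$ is merely continuous); this does not affect your argument, as you only need a continuous isotopy through homeomorphisms there.
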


The proof is based on a classical two dimensional smoothing result that all homeomorphisms between surfaces are isotopic to diffeormorphisms; see Baer \cite{Baer}, Epstein \cite{Epstein}, or an unpublished short proof due to Hatcher \cite{Hatcher-survey}. We formulate the needed result as follows:

\begin{theorem}
\label{thm:isotopy}
Let $M$ and $M'$ be closed, smooth $2$-manifolds, and $f\colon M \to M'$ a homeomorphism. Then there exists a diffeomorphism $f'\colon M \to M'$ that is isotopic to $f$, i.e. there exists a map
\[
F\colon M \times [0,1] \to M',
\]
for which $F(x,0) = f(x)$ and $F(x,1) = f'(x)$ for all $x \in M$ and the map $F_t\colon M \to M'$, $x \mapsto F(x,t)$ is an embedding for each $t \in [0,1]$.
\end{theorem}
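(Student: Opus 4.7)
The statement is a classical fact and I would follow the standard triangulation-and-smoothing route used, in various forms, by Baer, Epstein, and Hatcher. The plan is in three stages: first produce a PL homeomorphism $f_{PL}\colon M\to M'$ that is $C^{0}$-close to $f$; second, smooth $f_{PL}$ into a diffeomorphism $f'$; and third, assemble an isotopy from $f$ to $f'$ by concatenating small isotopies between the three consecutive approximations.

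For the PL approximation, I would equip $M$ and $M'$ with smooth Riemannian metrics and fix triangulations $\mathcal{T},\mathcal{T}'$ compatible with the given smooth structures (available in dimension two by the Rad\'o--Moise theorem). After sufficiently many barycentric subdivisions of $\mathcal{T}$, each image $f(\sigma)$ of a simplex $\sigma\in\mathcal{T}$ lies inside the interior of a smooth coordinate chart of $M'$. Define $f_{PL}$ by setting $f_{PL}(v)=f(v)$ on vertices, extending affinely over each edge in the chart coordinates, and then over each $2$-simplex; refining $\mathcal{T}$ enough keeps $f_{PL}$ injective and uniformly close to $f$. Now $f_{PL}$ fails to be smooth only on the $1$-skeleton of $\mathcal{T}$, so I would convolve with a standard mollifier in local charts of $M'$, using a smooth partition of unity supported in a thin tubular neighbourhood of that $1$-skeleton, to round corners along edges and at vertices. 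The outcome is a smooth map $f'$ that coincides with $f_{PL}$ outside the neighbourhood and, for sufficiently small mollification scale, remains injective with everywhere non-singular Jacobian; thus $f'$ is a diffeomorphism. The isotopy from $f$ to $f'$ would then be built in local charts of $M'$ by straight-line interpolation between $f$ and $f_{PL}$, and between $f_{PL}$ and $f'$, concatenating the two families.

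The main obstacle is ensuring that every map along each straight-line interpolation remains a homeomorphism rather than merely a continuous map, and likewise that the mollified $f'$ retains injectivity. This is precisely where the two-dimensional hypothesis is essential — the analogous statement fails in higher dimensions because of exotic smooth structures — and where the classical arguments concentrate their effort, invoking a Schoenflies-type lemma on embedded disks or the Alexander trick for surfaces to control the injectivity of the interpolated family.
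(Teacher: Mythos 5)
The paper does not prove Theorem~\ref{thm:isotopy}: it records it as a classical fact and refers to Baer, Epstein, and an unpublished short proof of Hatcher, so there is no in-paper argument to compare step by step. Your overall plan (PL approximation of $f$, then smoothing, then assembling an isotopy) is the standard classical route, and you correctly identify that the Schoenflies theorem and the Alexander trick are where the effort concentrates.

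The smoothing step as written contains a genuine gap. Convolving a PL homeomorphism with a mollifier near a vertex yields, roughly, a convex combination of the linear maps on the incident simplices, and a convex combination of matrices with positive determinant can have zero or negative determinant --- for instance, averaging the shears $\left(\begin{smallmatrix}1&t\\0&1\end{smallmatrix}\right)$ and $\left(\begin{smallmatrix}1&0\\t&1\end{smallmatrix}\right)$ gives determinant $1-t^2/4<0$ for $t>2$. So the mollified map can fail to be an immersion at vertices no matter how small the mollification scale, and the assertion that ``sufficiently small scale keeps the Jacobian non-singular'' is false in general. The classical smoothing (Munkres-style) proceeds skeleton by skeleton, replacing the map in shrinking neighbourhoods first of vertices and then of edges by carefully constructed local diffeomorphisms agreeing with the PL map near the boundary; this inductive structure, not a single global convolution, is what must be carried out. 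Similarly, straight-line interpolation in charts does not automatically stay injective: the standard device is to arrange that two successive maps agree outside a small embedded disk and then apply the Alexander trick inside it, and that reduction is the substance of the argument, not a footnote. A smaller point: your remark that the result ``fails in higher dimensions because of exotic smooth structures'' is the right reason only from dimension four onward; the analogous statement also holds in dimension three (Moise, Munkres --- the paper invokes precisely this elsewhere), so the dimension-two hypothesis buys you the planar Schoenflies and Alexander machinery rather than being the unique dimension where the theorem is true.
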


\begin{proof}[Proof of Lemma~\ref{lemma:gluing-diffeomorphism}]
Let $h'\colon (\partial \bar B^2 \times \SSS^1)\times \{1,\ldots, J\} \to (\partial T_1\cup \cdots \cup \partial T_J)$ be a diffeomorphism isotopic to $h$ as in Theorem~\ref{thm:isotopy}. It is a well-known fact that if the gluing map is a diffeomorphism, the smooth structures of the manifolds glued together induce a unique smooth structure on the resulting manifold (see for example Hirsch \cite[Section 8.2]{Hirsch}).

Now it remains to show that $\widehat M'$ is homeomorphic to $\widehat M$. We will define the homeomorphism $\omega\colon \widehat M' \to \widehat M$ piecewise. Write $\bar B^2(\frac{1}{2}) = \{(x,y) \in \RR^2 : \sqrt{x^2+y^2} \leq \frac{1}{2} \}$. Let $\omega|_{M\setminus\interior (T_1 \cup \cdots \cup T_J)}$ and $\omega|_{\bar B^2(\frac{1}{2})\times \SSS^1 \times \{1,\ldots, J\}}$ be the identity maps. Let $F\colon (\partial \bar B^2 \times \SSS^1)\times \{1,\ldots, J\} \times [0,1] \to (\partial T_1\cup \cdots \cup \partial T_J)$ be the isotopy between $h$ and $h'$. Define $\omega|_{(\bar B^2 \setminus B^2(\frac{1}{2}))\times \SSS^1 \times \{1,\ldots, J\}}$ to be
\[
(x,e^{i\theta},j) \mapsto |x| \cdot (h^{-1} \circ F)(x/|x|,e^{i\theta},j,2(|x|-\tfrac{1}{2})),
\]
where we use the notation $a \cdot (x,e^{i\theta},j) = (ax,e^{i\theta},j)$ with $a \in [0,1]$ and $(x,e^{i\theta},j) \in \partial \bar B^2 \times \SSS^1 \times \{1,\dots,J\}$.

Now fix $t \in [0,1]$. The map $F_t$ is an embedding and hence injective. This implies with the domain invariance theorem that $F_t$ is an open map. Now fix $j \in \{1,\dots,J\}$. The map $F$ is continuous and the set $\partial \bar B^2 \times \SSS^1 \times \{j\} \times [0,1]$ is connected, so its image $F(\partial \bar B^2 \times \SSS^1 \times \{j\} \times [0,1])$ is also connected and hence contained in some $\partial T_{j'}$. We may assume that $j'=j$. The set $\partial \bar B^2 \times \SSS^1 \times \{j\}$ is open and compact and hence also its image $F_t(\partial \bar B^2 \times \SSS^1 \times \{j\})$ is open and compact and thus open and closed. This implies that it covers the whole $\partial T_j$. Hence $F_t$ is surjective.

We have shown that $F_t$ is bijective for each $t \in [0,1]$. This together with the initial assumption that $h$ is a homeomorphism and $h^{-1}$ hence bijective implies that $\omega|_{(\bar B^2 \setminus B^2(\frac{1}{2}))\times \SSS^1 \times \{1,\ldots, J\}}$ is bijective.

We will show next that $\omega$ is well-defined. Let $(x,e^{i\theta},j) \in \partial \bar B^2 \times \SSS^1 \times \{1,\ldots, J\}$. Now we have
\begin{align*}
\omega|_{(\bar B^2 \setminus B^2(\frac{1}{2}))\times \SSS^1 \times \{1,\ldots, J\}}(x,e^{i\theta},j)
&= |x| \cdot (h^{-1} \circ F)(x/|x|,e^{i\theta},j,2(|x|-\tfrac{1}{2}))\\
&= 1 \cdot (h^{-1} \circ F)(x/1,e^{i\theta},j,2(1-\tfrac{1}{2}))\\
&= (h^{-1} \circ F)(x,e^{i\theta},j,1).
\end{align*}
Thus
\begin{align*}
h(\omega|_{(\bar B^2 \setminus B^2(\frac{1}{2}))\times \SSS^1 \times \{1,\ldots, J\}}(x,e^{i\theta},j))
&= F(x,e^{i\theta},j,1) \\
& = h'(x,e^{i\theta},j) \\
&= \omega|_{M\setminus\interior (T_1 \cup \cdots \cup T_J)}(h'(x,e^{i\theta},j)).
\end{align*}
Hence the definitions of $\omega|_{(\bar B^2 \setminus B^2(\frac{1}{2}))\times \SSS^1 \times \{1,\ldots, J\}}$ and $\omega|_{M\setminus\interior (T_1 \cup \cdots \cup T_J)}$ agree on the intersection $((\bar B^2 \setminus B^2(\frac{1}{2}))\times \SSS^1 \times \{1,\ldots, J\}) \bigcap (M\setminus\interior (T_1 \cup \cdots \cup T_J))$.

Let now $(x,e^{i\theta},j) \in (\bar B^2(\frac{1}{2}) \setminus B^2(\frac{1}{2})) \times \SSS^1 \times \{1,\dots,J\}$. We have
\begin{align*}
&\omega|_{(\bar B^2 \setminus B^2(\frac{1}{2}))\times \SSS^1 \times \{1,\ldots, J\}}(x,e^{i\theta},j)
= \tfrac{1}{2} \cdot (h^{-1} \circ F)(x/\tfrac{1}{2},e^{i\theta},j,2(\tfrac{1}{2}-\tfrac{1}{2})) \\
&= \tfrac{1}{2} \cdot (h^{-1} \circ F)(2x,e^{i\theta},j,0)
= \tfrac{1}{2} \cdot (h^{-1} \circ h)(2x,e^{i\theta},j) \\
&= \tfrac{1}{2} \cdot (2x,e^{i\theta},j) = (x,e^{i\theta},j)
= \omega|_{\bar B^2(\frac{1}{2})\times \SSS^1 \times \{1,\ldots, J\}}(x,e^{i\theta},j).
\end{align*}
Hence the definitions of $\omega|_{(\bar B^2 \setminus B^2(\frac{1}{2}))\times \SSS^1 \times \{1,\ldots, J\}}$ and $\omega|_{\bar B^2(\frac{1}{2})\times \SSS^1 \times \{1,\ldots, J\}}$ also agree on the intersection $((\bar B^2 \setminus B^2(\frac{1}{2}))\times \SSS^1 \times \{1,\ldots, J\}) \bigcap (\bar B^2(\frac{1}{2})\times \SSS^1 \times \{1,\ldots, J\})$.

All of its pieces are continuous, so also the whole map $\omega$ is continuous. Furthermore, the manifold $\widehat M'$ is compact, so we conclude that $\omega$ is a homeomorphism.
\end{proof}

Having now all the terminology and this lemma at our disposal, we may formulate an embedding lemma: 

\begin{lemma}
\label{lemma:3-dim-embedding}
Let $M$ be a closed, smooth $3$-manifold and $\widehat M$ a $3$-manifold obtained from $M$ by a surgery in link $L\subset M$. Then there exists a link $\widehat L$ in $\widehat M$ and a diffeomorphism $\widehat M\setminus \widehat L \to M \setminus L$. Furthermore, the derivatives of this diffeomorphism are controlled close to the link $\widehat L$.
\end{lemma}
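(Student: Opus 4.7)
The plan is to first invoke Lemma \ref{lemma:gluing-diffeomorphism} to replace the gluing homeomorphism $h$ with a diffeomorphism $h'$, producing a homeomorphic model $\widehat M' \approx \widehat M$ equipped with a genuine smooth structure; it will then suffice to construct the desired diffeomorphism on $\widehat M'$ and transport the conclusion back through this homeomorphism. The link $\widehat L$ will be the union of the core curves $\widehat S_j = \pi_{h'}(\{0\}\times \SSS^1\times \{j\})$ of the glued-in solid tori, as in \eqref{eq:widehat-L}, which are smooth circles by construction.

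Next I would define the diffeomorphism $F \colon \widehat M' \setminus \widehat L \to M\setminus L$ piecewise. On the portion $\pi_{h'}(M\setminus\interior(T_1\cup\cdots\cup T_J))$, set $F$ to be the identity. Using polar coordinates $(r,\theta,s)$ on $\bar B^2\times \SSS^1$ and writing $\eta_j(\theta,s) := \phi_j^{-1}\bigl(h'(e^{i\theta},e^{is},j)\bigr)$ for the boundary gluing read through $\phi_j$ (a smooth diffeomorphism of $\partial\bar B^2\times \SSS^1$), I then extend this radially into the $j$-th glued-in handle by the $r$-trivial formula
\[
F\bigl(\pi_{h'}(r,\theta,s,j)\bigr) = \phi_j\bigl(r,\eta_j(\theta,s)\bigr), \quad r\in(0,1].
\]
At $r=1$ this evaluates to $\phi_j(1,\eta_j(\theta,s)) = h'(e^{i\theta},e^{is},j)$, which is exactly the point on $\partial T_j\subset M\setminus \interior T_j$ identified with the boundary class on the glued side. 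Hence the two pieces fit together on $\partial T_j$ into a well-defined bijection onto $M\setminus L$.

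The main obstacle will be verifying smoothness of $F$ across $\partial T_j$ in the quotient smooth structure on $\widehat M'$. To handle this I would extend $\phi_j$ to an open parametrization $\tilde\phi_j\colon \bar B^2(1+\delta)\times \SSS^1 \to U_j\subset M$ of a neighbourhood of $T_j$, and then form a chart $\chi$ on a neighbourhood of $\partial T_j$ in $\widehat M'$ by $\chi(r,\theta,s) = \pi_{h'}(r,\theta,s,j)$ for $r\in(1-\delta,1]$ and $\chi(r,\theta,s) = \tilde\phi_j(r,\eta_j(\theta,s))$ for $r\in[1,1+\delta)$. Since $h'$ is a diffeomorphism, the two expressions agree at $r=1$ and the transition is smooth, so $\chi$ is a genuine smooth chart, and a direct computation shows $F\circ\chi(r,\theta,s) = \tilde\phi_j(r,\eta_j(\theta,s))$ on the entire chart domain — a single smooth formula. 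Combined with the obvious smoothness of $F$ on the interiors of each piece, this yields that $F$ is a diffeomorphism.

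Finally, the controlled-derivative condition comes for free from the explicit form. Taking $\widehat\phi_j(x,e^{is}) := \pi_{h'}(x,e^{is},j)$ as the parametrization required in the definition, the composite $\pi = \phi_j^{-1}\circ F\circ \widehat\phi_j$ reduces to $\pi(r,\theta,s) = (r,\eta_j(\theta,s))$. The partial derivatives $\partial_r\pi_r = 1$, $\partial_\theta\pi_r = \partial_s\pi_r = 0$, and $\partial_a\pi_\theta,\partial_a\pi_s$ for $a\in\{r,\theta,s\}$ are all bounded uniformly on $(\bar B^2\setminus\{0\})\times \SSS^1$ because $\eta_j$ is a smooth map on the compact torus $\partial\bar B^2\times \SSS^1$. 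This supplies the required constants $C_{rr},C_{r\theta},\ldots$, completing the proof plan.
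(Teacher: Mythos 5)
Your proposal is correct and follows essentially the same route as the paper: invoke Lemma \ref{lemma:gluing-diffeomorphism} to upgrade the gluing map to a diffeomorphism, take $\widehat L$ to be the core curves of the glued-in solid tori, define $F$ to be the identity off the tori and the radial ``cone'' extension $(r,\theta,s,j)\mapsto (r,\eta_j(\theta,s))$ inside them (which is exactly the paper's map $H$), and obtain the derivative bounds from compactness of $\partial \bar B^2\times\SSS^1$. The only real difference is that where the paper invokes Hirsch's gluing theorem to conclude that the piecewise-diffeomorphism can be taken to be a genuine diffeomorphism across $\partial\widehat\Omega$, you instead build an explicit collar chart $\chi$ (using the extension $\tilde\phi_j$) in which $F$ is given by a single smooth formula; this is a cleaner and more self-contained way to handle the same technical point, and both yield the same $\pi=(r,\eta_j(\theta,s))$ in the controlled-derivative computation.
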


\begin{proof}

For the argument, we assume that link $L$ has $J$ circles $S_1,\ldots,S_J$ and these circles are core curves of mutually disjoint solid $3$-tori $T_1,\ldots, T_J$ in $M$. Let $h \colon (\partial \bar B^2 \times \SSS^1)\times \{1,\ldots, J\} \to (\partial T_1\cup \cdots \cup \partial T_J)$ be the gluing homeomorphism for which 
\[
\widehat M = (M \setminus \interior (T_1 \cup \cdots T_J)) {\bigcup}_h \bar B^2\times \SSS^1 \times \{1,\ldots, J\}
\]
and let $\widehat T_1,\ldots, \widehat T_J$ be solid $3$-tori and $\widehat S_1,\ldots, \widehat S_J$ their core curves, respectively, as above.

We follow the idea of the proof of Proposition \ref{prop:2-dim-embedding} and define the diffeomorphism $F\colon \widehat M \setminus \widehat L \to M\setminus L$ in parts. On $M\cap \widehat M$, we set $F$ to be the identity. Thus it suffices to define $F$ on $\widehat T_j \setminus \widehat S_j$ for each $j = 1,\ldots, J$. 
To simplify the notation, let $\Omega_j = T_j \setminus S_j$ and $\widehat \Omega_j = \widehat T_j \setminus \widehat S_j$ for each $j=1,\ldots, J$. Let also $\Omega = \Omega_1 \cup \cdots \cup \Omega_J$ and $\widehat \Omega = \widehat \Omega_1 \cup \cdots \cup \widehat \Omega_J$.

Let $\phi  \colon \bar B^2\times \SSS^1 \times \{1,\ldots, J\}\to T_1\cup \cdots \cup T_J$ be a diffeomorphism simultaneously parametrizing each solid torus $T_1,\ldots, T_J$. By Lemma~\ref{lemma:gluing-diffeomorphism}, we may assume the gluing homeomorphism $h$ to be a diffeomorphism. Then it induces a diffeomorphism
\[
h' \colon (\partial \bar B^2)\times \SSS^1 \times \{1,\ldots, J\} \to  (\partial \bar B^2)\times \SSS^1 \times \{1,\ldots, J\}
\]
satisfying
\[
\phi \circ h' = h.
\]
Let now 
\[
H \colon (\bar B^2\setminus \{0\}) \times \SSS^1\times \{1,\ldots, J\} \to (\bar B^2\setminus \{0\}) \times \SSS^1\times \{1,\ldots, J\}
\]
be the homeomorphism 
\[
(x,e^{i \theta},j) \mapsto |x| \cdot h'(x/|x|, e^{i\theta}, j)
\]
extending the homeomorphism $h'$, where we are again using the notation $a \cdot (x,e^{i\theta},j) = (ax,e^{i\theta},j)$.

Finally, let 
\[
\psi \colon \bar B^2 \times \SSS^1\times \{1,\ldots, J\} \to \widehat T_1 \cup \cdots \cup \widehat T_J
\]
be the restriction of the quotient map $\pi_h$ and define the restriction
\[
F|_{\widehat \Omega} \colon \widehat \Omega \to \Omega
\]
by the formula
\[
F|_{\widehat \Omega} = \phi \circ H \circ \psi^{-1}|_{\widehat \Omega}.
\]
Since $\phi \circ H \circ \psi^{-1}|_{\partial \widehat \Omega}$ is the identity, the map $F$ is well-defined. Since restrictions $F|_{\widehat M \setminus \interior \widehat \Omega}$ and $F|_{\widehat \Omega}$ are continuous, we have that $F$ is continuous. Clearly, $F$ is also bijective. Since $F$ extends to a continuous bijection over the link $\widehat L$ to a continuous bijection $\widehat M \to M$ and $\widehat M$ is compact, we conclude that $F$ is a homeomorphism.


Recall that we defined the map $F$ so that the restriction $F|_{\widehat M \setminus \interior \widehat \Omega}$ is the identity. Now the domain and codomain of $F|_{\widehat M \setminus \interior \widehat \Omega}$ have the same differential structure, so this restriction is furthermore a diffeomorphism. For the restriction $F|_{\widehat \Omega}$, we have $F|_{\widehat \Omega} = \phi \circ H \circ \psi^{-1}|_{\widehat \Omega}$. Since the map $h'$ is a diffeomorphism, also $H$ is a diffeomorphism. In addition, the maps $\phi$ and $\psi$ are smooth embeddings, so as their composite map, the restriction $F|_{\widehat \Omega}$ is a diffeomorphism.

We have now shown that the both restrictions
\[
F|_{\widehat M \setminus \interior \widehat \Omega}\colon \widehat M \setminus \interior \widehat \Omega \to M \setminus \interior \Omega \qquad\text{and }\quad F|_{\widehat \Omega}\colon \widehat \Omega \to \Omega
\]
are diffeomorphisms. It then follows that there exists a diffeomorphism $\widehat M\setminus \widehat L \to M \setminus L$ that agrees with $F$ on the subset $\widehat \Omega$ (see for example Hirsch \cite[Theorem 8.1.9]{Hirsch}). For the rest of this proof, we will use $F$ to denote this diffeomorphism.

It remains to show that the derivatives of the diffeomorphism $F$ are controlled close to the link $\widehat L$. We have
\begin{align*}
\pi &= \phi^{-1} \circ F \circ \psi|_{(\bar B^2 \setminus \{0\}) \times \SSS^1 \times \{j\}}\\
    &= \phi^{-1} \circ \phi \circ H \circ \psi^{-1} \circ \psi|_{(\bar B^2 \setminus \{0\}) \times \SSS^1 \times \{j\}}\\
    &= H|_{(\bar B^2 \setminus \{0\}) \times \SSS^1 \times \{j\}}.
\end{align*}
We want to now use polar coordinates for $\bar B^2$, so we use the notation
\[
h'(\theta,s,j) = (h'_\theta(\theta,s,j),h'_s(\theta,s,j),h'_j(\theta,s,j)),
\]
where $h'_\theta$, $h'_s$ and $h'_j$ are the coordinate functions of $h'$. Now we have for the map $H$ the formula
\[
H(r,\theta,s,j) = (r,h'_\theta(\theta,s,j),h'_s(\theta,s,j),j).
\]
Then we may calculate the partial derivatives. For the derivative $\frac{\partial \pi_r}{\partial r}$, we obtain
\[
\frac{\partial \pi_r}{\partial r} (r,\theta,s) = \frac{\partial H_r}{\partial r} (r,\theta,s)
= \frac{\partial r}{\partial r} = 1.
\]
For the derivative $\frac{\partial \pi_\theta}{\partial s}$, we obtain
\[
\frac{\partial \pi_\theta}{\partial s} (r,\theta,s)
= \frac{\partial H_\theta}{\partial s} (r,\theta,s,j)
= \frac{\partial h'_\theta}{\partial s} (r,\theta,s,j).
\]
The derivative $\frac{\partial h'_\theta}{\partial s}$ is continuous and its domain $\bar B^2 \times \SSS^1 \times \{1,\dots,J\}$ is compact, so also its image $\frac{\partial h'_\theta}{\partial s} (\bar B^2 \times \SSS^1 \times \{1,\dots,J\})$ is compact and hence bounded. The remaining partial derivatives can be shown to be bounded in a similar manner.
\end{proof}

\subsection{Proof of Proposition \ref{smooth-embedding}}

The controlled version of Proposition \ref{smooth-embedding} reads as follows:

\begin{theorem}
\label{thm:smooth-embedding-precise}
Let $M$ be a closed, connected, and orientable $3$-manifold. Then there exist links $L \subset M$ and $\widetilde L \subset \SSS^3$, and a diffeomorphism $M\setminus L \to \SSS^3\setminus \widetilde L$. Furthermore, the derivatives of this diffeomorphism are controlled close to the link $L$.
\end{theorem}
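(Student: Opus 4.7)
The plan is to combine the Lickorish--Wallace theorem (Theorem \ref{thm:LW-precise}) with Lemma \ref{lemma:3-dim-embedding}, after a smoothing step that upgrades a topological surgery identification to a smooth one. First I would apply Theorem \ref{thm:LW-precise} to $M$ to obtain a link $\widetilde L = \widetilde S_1 \cup \cdots \cup \widetilde S_J \subset \SSS^3$, mutually disjoint solid $3$-tori $\widetilde T_1,\ldots,\widetilde T_J \subset \SSS^3$ whose core curves are these circles, and a gluing homeomorphism $h$ so that $M$ is homeomorphic to the surgered manifold
\[
\widehat{\SSS^3} = \left(\SSS^3\setminus \interior(\widetilde T_1\cup \cdots \cup \widetilde T_J)\right) {\bigcup}_h \bar B^2\times \SSS^1\times \{1,\ldots,J\}.
\]

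Second I would upgrade this topological identification to a smooth one. Lemma \ref{lemma:gluing-diffeomorphism} permits replacing $h$ with an isotopic gluing diffeomorphism $h'$, producing a surgered manifold $\widehat{\SSS^3}'$ that is homeomorphic to $M$ and carries a canonical smooth structure inherited from its smooth pieces. To promote the homeomorphism $M \approx \widehat{\SSS^3}'$ to a diffeomorphism $\Phi\colon M \to \widehat{\SSS^3}'$, I would invoke the classical three-dimensional smoothing theory of Moise and Munkres, which implies that homeomorphic smooth $3$-manifolds are diffeomorphic.

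Third I would apply Lemma \ref{lemma:3-dim-embedding} in the configuration where the lemma's $M$ is $\SSS^3$, its $L$ is $\widetilde L$, and its $\widehat M$ is $\widehat{\SSS^3}'$. This produces a link $\widehat L \subset \widehat{\SSS^3}'$ consisting of the core curves of the glued-in solid tori, as in \eqref{eq:widehat-L}, together with a diffeomorphism $F\colon \widehat{\SSS^3}' \setminus \widehat L \to \SSS^3 \setminus \widetilde L$ whose derivatives are controlled close to $\widehat L$. I would then set $L := \Phi^{-1}(\widehat L) \subset M$ and take the required diffeomorphism to be $F\circ \Phi|_{M\setminus L}\colon M \setminus L \to \SSS^3 \setminus \widetilde L$. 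The derivative control near $L$ is inherited from that of $F$: transporting the tubular-neighborhood parametrization of $\widehat L$ along $\Phi^{-1}$ gives a parametrization of a tubular neighborhood of $L$, and the bounds on the partials of the composite coordinate map then combine with the uniformly bounded derivatives of $\Phi$ and $\Phi^{-1}$ (which exist because both are diffeomorphisms between compact smooth manifolds) to yield the required bounds in new constants.

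The hard part will be the smoothing step: turning the Lickorish--Wallace homeomorphism into a diffeomorphism relies on uniqueness of smooth structures in dimension three, which is nontrivial but classical. Lemma \ref{lemma:gluing-diffeomorphism} has already handled the smoothness of $\widehat{\SSS^3}'$ as an abstract manifold, and once uniqueness is granted, the rest of the argument is a direct assembly of the ingredients already established in the preceding subsections.
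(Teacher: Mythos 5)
Your proposal matches the paper's proof essentially step for step: you apply Theorem~\ref{thm:LW-precise}, upgrade the gluing to a diffeomorphism via Lemma~\ref{lemma:gluing-diffeomorphism}, invoke the Moise--Munkres smoothing theorem to obtain a diffeomorphism $\Phi\colon M \to \widehat M$, pull back $\widehat L$ to get $L = \Phi^{-1}(\widehat L)$, apply Lemma~\ref{lemma:3-dim-embedding} to get a derivative-controlled map $\widehat M\setminus \widehat L \to \SSS^3\setminus\widetilde L$, and compose. The derivative-control argument you sketch (transporting the tubular-neighbourhood parametrization along $\Phi^{-1}$ and combining with compactness) is precisely the paper's chain-rule computation with $\rho = \Phi^{-1}\circ\phi$, so this is the same proof.
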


The last ingredient of the proof of Theorem \ref{thm:smooth-embedding-precise} is the smoothing theorem due to Munkres, which states that homeomorphic smooth $3$-manifolds are diffeomorphic.

\begin{theorem} 
Let $M$ and $N$ be smooth $3$-manifolds. If there exists a homeomorphism $M\to N$, then there exists a diffeomorphism $M\to N$.
\end{theorem}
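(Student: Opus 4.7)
The plan is to deduce the statement from three classical results of low-dimensional topology, whose combined effect is that the categories of topological, piecewise linear (PL), and smooth $3$-manifolds are equivalent up to natural isomorphism. The strategy, given a homeomorphism $h \colon M \to N$, is to replace $h$ by a PL homeomorphism and then smooth that PL homeomorphism to a diffeomorphism.

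First, I would invoke the Cairns--Whitehead theorem on smoothing of triangulations to fix smooth triangulations $K$ of $M$ and $L$ of $N$; this upgrades each given smooth structure to a compatible PL structure. Second, I would apply Moise's triangulation theorem together with the three-dimensional Hauptvermutung: the two triangulations $L$ and $h(K)$ of the underlying topological $3$-manifold $N$ admit a common simplicial subdivision, so there exists a PL homeomorphism $g \colon (N, L) \to (N, h(K))$ that is topologically isotopic to the identity. Composing, the map $h^{-1}\circ g \colon (N, L) \to (M, K)$ is then a PL homeomorphism between smoothly triangulated PL $3$-manifolds. Third and finally, I would apply Munkres' smoothing theorem: every PL homeomorphism between smoothly triangulated PL $3$-manifolds can be $C^0$-approximated by, and is in fact isotopic through homeomorphisms to, a diffeomorphism. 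Inverting yields the desired diffeomorphism $M \to N$.

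The substantive obstacle is Moise's Hauptvermutung in dimension three, which asserts that any two triangulations of a topological $3$-manifold admit a common subdivision. This deep input is the reason the result is restricted to dimension three: the analogous statement fails in dimension four, where exotic smoothings and the Kirby--Siebenmann obstruction appear, and is equally delicate in higher dimensions. The Cairns--Whitehead smoothing of triangulations and the Munkres smoothing of PL homeomorphisms, while not trivial, are comparatively routine once Moise's theorem is available. Accordingly, the proof in the paper is best presented as a citation-based chain combining the work of Moise, Cairns, Whitehead, and Munkres, rather than reproduced from first principles.
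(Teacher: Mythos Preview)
Your chain of citations (Cairns--Whitehead smooth triangulations, Moise's Hauptvermutung in dimension three, Munkres' smoothing of PL homeomorphisms) is the standard and correct way to obtain this statement, and there is no gap in the outline.

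However, there is nothing to compare against: the paper does not prove this theorem at all. It is quoted as ``the smoothing theorem due to Munkres'' and used as a black box in the proof of Theorem~\ref{thm:smooth-embedding-precise}. So your proposal is not an alternative to the paper's proof but rather an expansion of a result the paper simply cites. If anything, your summary makes explicit that the attribution to Munkres alone is a slight understatement, since Moise's Hauptvermutung is the deep input and Munkres supplies the final smoothing step.
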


\begin{proof}[Proof of Theorem \ref{thm:smooth-embedding-precise}]
Let $\widetilde L$ be the link in $\SSS^3$ and $\widehat M$ the closed, connected, and orientable $3$-manifold given by Theorem \ref{thm:LW-precise}. Let also $\widehat L$ be the link in $\widehat M$ induced by the surgery of $\SSS^3$ along the link $\widetilde L$. 
By Lemma~\ref{lemma:gluing-diffeomorphism}, we may assume $\widehat M$ to be a smooth manifold. Then, since $M$ and $\widehat M$ are homeomorphic smooth $3$-manifolds, there exists a diffeomorphism $\Phi \colon M \to \widehat M$. Let $L = \Phi^{-1} \widehat L$. 
Finally, there exists a diffeomorphism $\psi\colon \widehat M\setminus \widehat L \to \SSS^3\setminus \widetilde L$ by Lemma \ref{lemma:3-dim-embedding}. Let now $F\colon M\setminus L \to \SSS^3\setminus \widetilde L$ be the diffeomorphism $F = \psi \circ \Phi$.

Let $\widetilde T$ be the union of the solid tori in $\SSS^3$ that the surgery is performed along and let $\widetilde \rho\colon \bar B^2 \times \SSS^1 \times \{1,\dots,J\} \to \widetilde T$ be its parametrization. Let $\widehat T \subset \widehat M$ be the union of the solid tori that were attached to $\SSS^3 \setminus \interior \widetilde T$ in the surgery and let $\phi\colon \bar B^2 \times \SSS^1 \times \{1,\dots,J\} \to \widehat T$ be its parametrization induced by the quotient map $\pi_h$.
Let $\rho = \Phi^{-1} \circ \phi$. We now have
\begin{align*}
\pi &= \widetilde \rho^{-1} \circ F \circ \rho|_{(\bar B^2 \setminus \{0\}) \times \SSS^1 \times \{j\}}
    = \widetilde \rho^{-1} \circ \psi \circ \Phi \circ \rho|_{(\bar B^2 \setminus \{0\}) \times \SSS^1 \times \{j\}} \\
    &= \widetilde \rho^{-1} \circ \psi \circ \phi \circ \phi^{-1} \circ \Phi \circ \rho|_{(\bar B^2 \setminus \{0\}) \times \SSS^1 \times \{j\}}.
\end{align*}
Using the previous formula and the chain rule for derivatives, we have for the derivative $\frac{\partial \pi_r}{\partial \theta}$
\begin{align*}
\frac{\partial \pi_r}{\partial \theta}(r,\theta,s)
&= \frac{\partial (\widetilde \rho^{-1} \circ \psi \circ \phi \circ \phi^{-1} \circ \Phi \circ \rho)_r}{\partial \theta}(r,\theta,s) \\
&= \frac{\partial (\phi^{-1} \circ \Phi \circ \rho)_r}{\partial \theta}(r,\theta,s)
\cdot \frac{\partial (\widetilde \rho^{-1} \circ \psi \circ \phi)_r}{\partial r}(\phi^{-1} \circ \Phi \circ \rho(r,\theta,s)) \\
&\quad+ \frac{\partial (\phi^{-1} \circ \Phi \circ \rho)_\theta}{\partial \theta}(r,\theta,s)
\cdot \frac{\partial (\widetilde \rho^{-1} \circ \psi \circ \phi)_r}{\partial \theta}(\phi^{-1} \circ \Phi \circ \rho(r,\theta,s)) \\
&\quad+ \frac{\partial (\phi^{-1} \circ \Phi \circ \rho)_s}{\partial \theta}(r,\theta,s)
\cdot \frac{\partial (\widetilde \rho^{-1} \circ \psi \circ \phi)_r}{\partial s}(\phi^{-1} \circ \Phi \circ \rho(r,\theta,s)).
\end{align*}
We know by Lemma~\ref{lemma:3-dim-embedding} that the derivatives of the diffeomorphism $\psi$ are controlled close to the link $\widehat L$. Hence the partial derivatives of the composite map $\widetilde \rho^{-1} \circ \psi \circ \phi$ are bounded. The partial derivatives of the composite map $\phi^{-1} \circ \Phi \circ \rho$, on the other hand, are continuous and their domain $\bar B^2 \times \SSS^1 \times \{1,\dots,J\}$ is compact, so also their images are compact and hence bounded. Since all these partial derivatives are bounded, it follows that also $\frac{\partial \pi_r}{\partial \theta}$ is bounded. The remaining partial derivatives of the map $\pi$ can be shown to be bounded using similar argument.
\end{proof}





\bibliographystyle{abbrv}
\bibliography{universe-references}

\addcontentsline{toc}{section}{Bibliography}

\end{document}